\documentclass[11pt]{amsart}

% The amssymb package provides various useful mathematical symbols
\usepackage{amsfonts,epsfig}
\usepackage{latexsym}
\usepackage{amssymb}
\usepackage{amsmath}
\usepackage{amsthm}
\usepackage{graphics}
\usepackage[all]{xy}
\usepackage[T2A]{fontenc}
\usepackage{multirow}
\usepackage{float}

\usepackage{array, booktabs, ctable}
\usepackage{hyperref}
\usepackage{color}
\usepackage{mathtools}
\usepackage{enumerate}
\pagestyle{plain}

\addtolength{\textwidth}{4cm} \addtolength{\hoffset}{-2cm}
\addtolength{\marginparwidth}{-2cm} 

\newtheorem{defn}{Definition}[section]

\newtheorem{corollary}[defn]{Corollary}
\newtheorem{lemma}[defn]{Lemma}

\newtheorem{theorem}[defn]{Theorem}

\theoremstyle{definition}

%Cremona Label

%LMFDB Label
\newcommand{\lmfdbec}[3]{\href{https://www.lmfdb.org/EllipticCurve/Q/#1/#2/#3}{#1.#2#3}}

\newcommand{\Q}{\mathbb Q}
\newcommand{\Z}{\mathbb Z}

\newcommand{\Gal}{\operatorname{Gal}}

\begin{document}

\bibliographystyle{plain}

\title[Torsion Cyclotomic Extension]{Torsion of Rational Elliptic Curves over the Cyclotomic Extensions of $\mathbb{Q}$}

\author{\"{O}mer Avci}

\address{Dept. of Mathematics, Bogazici University, Istanbul, Turkey}
\email{omeravci372742@gmail.com} 

%\keywords{elliptic curves, torsion subgroup}

%\subjclass{Primary: 11G05, Secondary: 14H52.}

\begin{abstract} Let $E$ be an elliptic curve defined over $\mathbb{Q}$. In this article, we classify all groups that can arise as $E(\mathbb{Q}(\zeta_p))_{\text{tors}}$ up to isomorphism for any prime $p$. When $p - 1$ is not divisible by small integers such as $3, 4, 5, 7$, or $11$, we obtain a sharper classification. For any abelian number field $K$, the torsion subgroup $E(K)_{\text{tors}}$ is a subgroup of $E(\mathbb{Q}^{\text{ab}})_{\text{tors}}$. Our methods provide tools to eliminate non-realized torsion structures from the list of possibilities for $E(K)_{\text{tors}}$.

\end{abstract}

\maketitle

\section{Introduction and Notation}

Let $K$ be a number field and $E$ an elliptic curve over $K$. By the Mordell–Weil theorem, the group $E(K)$ of $K$-rational points is finitely generated, so that
\begin{equation*}
E(K) \cong E(K)_{\text{tors}} \oplus \mathbb{Z}^{r_K},
\end{equation*}
where $r_K \geq 0$ is a non-negative integer called the \emph{rank}, and $E(K)_{\text{tors}}$ is a finite group called the \emph{torsion subgroup} of $E$ over $K$.

In fact, due to a theorem of Merel~\cite{Merel}, there is a bound on the size of the torsion subgroup that depends only on the degree of $K$ over $\mathbb{Q}$. Thus, there is a finite list of torsion subgroups that can appear as $E(K)_{\text{tors}}$ as $K$ varies over number fields of a fixed degree $d$ and $E/K$ varies. This means that there are only finitely many torsion subgroups (up to isomorphism) that can appear as $E(K)_{\text{tors}}$ when $K$ has a fixed degree and $E$ is defined over $K$.

Determining the possible groups for a fixed degree $d$ has been solved by Mazur for $d = 1$ in~\cite{Mazur}; by Kenku~\cite{kamienny}, Kamienny, and Momose~\cite{kenkumomose} for $d = 2$; and by Derickx, Etropolski, van Hoeij, Morrow, and Zureick-Brown in a joint paper for $d = 3$~\cite{cubicclass}. The case $d = 4$ has recently been resolved by Derickx and Najman in~\cite{derickxnajman}.

Another variant of this problem is to determine the torsion subgroups (up to isomorphism) that can appear as $E(K)_{\text{tors}}$ when $K$ has a fixed degree and $E$ is defined over $\mathbb{Q}$. Clearly, the set of torsion subgroups in this problem is a subset of the set of torsion subgroups in the previous problem. This problem has been solved for $d = 2, 3$ by Najman, for $d = 4$ by Gonz\'{a}lez-Jim\'{e}nez and Najman~\cite{quarticgjn}, for $d = 5$ by Gonz\'{a}lez-Jim\'{e}nez, and for $d = 6$ by Gužvić~\cite{guzvicsextic}.

In our work, we classify $E(\mathbb{Q}(\zeta_p))_{\text{tors}}$ for any prime $p$. By doing so, we also classify 
$E(\mathbb{Q}(\mu_{p^\infty}))_{\text{tors}}$ for those primes. One of our main results is the following theorem. To derive the shortest possible list, we impose the condition that 
$p - 1$ is not divisible by 
$3$, $4$, or $5$. The proof is presented in Section~\ref{eliminationsection}, while Section~\ref{relaxingsection} examines cases with relaxed conditions to include all primes.
\begin{theorem}\label{maintheorem}
    Let $E/\mathbb{Q}$ be an elliptic curve, and let $p > 3$ be a prime such that $p - 1$ is not divisible by $3$, $4$, or $5$. Then $E(\mathbb{Q}(\zeta_p))_{\text{tors}}$ is either one of the groups from Mazur’s theorem, or one of the following groups:
  \begin{equation*}
    \mathbb{Z}/2\mathbb{Z} \times \mathbb{Z}/10\mathbb{Z}, \quad 
    \mathbb{Z}/2\mathbb{Z} \times \mathbb{Z}/12\mathbb{Z}, \quad 
    \text{or} \quad \mathbb{Z}/16\mathbb{Z}.
 \end{equation*}
\end{theorem}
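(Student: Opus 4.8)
The plan is to show that, under the hypothesis $3,4,5\nmid p-1$, any torsion not already present over $\Q$ must in fact appear over the unique quadratic subfield of $\Q(\zeta_p)$, and then to read off the answer from the known classification of $E(K)_{\text{tors}}$ for $E/\Q$ with $K$ quadratic, discarding the structures that cannot be supported by a \emph{cyclotomic} quadratic field. I would begin with two reductions. Writing $E(\Q(\zeta_p))_{\text{tors}}\cong \Z/m\Z\times\Z/n\Z$ with $m\mid n$, the Weil pairing gives $\zeta_m\in\Q(\zeta_p)$; since the roots of unity in $\Q(\zeta_p)$ are exactly $\mu_{2p}$ we get $m\mid 2p$, and full $p$- or $2p$-torsion is impossible (it would force $\bar\rho_{E,p}$ to be trivial while its determinant, the mod-$p$ cyclotomic character, is surjective of order $p-1>1$), so $m\in\{1,2\}$. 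Next, $\Q(\zeta_p)/\Q$ is cyclic of degree $p-1$, so its subfields are precisely one of each degree $d\mid p-1$; the hypothesis removes every subfield of degree $3,4,5$ (hence also $9,15,20,25,\dots$), leaving the quadratic field $\Q(\sqrt{p^\ast})$, $p^\ast=(-1)^{(p-1)/2}p$, as the only nontrivial subfield of small degree.

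The core reduction is to argue that no enlargement of the torsion can be forced by a subfield of $\Q(\zeta_p)$ other than $\Q(\sqrt{p^\ast})$. For a point $P$ of odd prime-power order $\ell^k$ on $E/\Q$ lying in an abelian extension, the group $\langle P\rangle$ is $\Gal(\overline{\Q}/\Q)$-stable and the action is through an isogeny character $\phi\colon \Gal(\overline{\Q}/\Q)\to(\Z/\ell^k\Z)^\times$, so $[\Q(P):\Q]=\operatorname{ord}(\phi)$ must divide $p-1$; using the relation $\phi\phi'=\chi_{\ell^k}$ from the Weil pairing, and the fact that rational $\ell$-isogenies exist only for the finite set of $\ell$ in Mazur's isogeny theorem, a short analysis of the admissible orders of $\phi$ shows that every case compatible with $3,4,5\nmid p-1$ already forces $\operatorname{ord}(\phi)\le 2$, i.e.\ $\Q(P)\subseteq\Q(\sqrt{p^\ast})$. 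The $2$-primary part is handled separately: a $2$-power torsion point is defined over a field of $2$-power degree, which is coprime to $4$ only when it is $1$ or $2$, so it too lands in $\Q(\sqrt{p^\ast})$; likewise full $2$-torsion lands there since $\Q(E[2])$ has degree dividing $6$. The upshot is that $E(\Q(\zeta_p))_{\text{tors}}$ equals either $E(\Q)_{\text{tors}}$ or $E(\Q(\sqrt{p^\ast}))_{\text{tors}}$.

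It then remains to apply the classification of $E(K)_{\text{tors}}$ for $E/\Q$ over quadratic $K$: beyond Mazur's groups the possibilities with $m\in\{1,2\}$ are $\Z/11\Z$, $\Z/15\Z$, $\Z/16\Z$, $\Z/2\Z\times\Z/10\Z$, and $\Z/2\Z\times\Z/12\Z$ (the groups with $m\ge 3$, such as $\Z/3\Z\times\Z/3\Z$ or $\Z/4\Z\times\Z/4\Z$, are already excluded since $\zeta_3,\zeta_4\notin\Q(\zeta_p)$). I would separate the remaining five by the geometry of the relevant modular curve. The sporadic groups $\Z/11\Z$ and $\Z/15\Z$ come from curves with only finitely many rational points and occur over one fixed quadratic field each, namely $\Q(\sqrt{-11})$ and $\Q(\sqrt 5)$; for such a group to embed in $E(\Q(\zeta_p))_{\text{tors}}$ that field must equal $\Q(\sqrt{p^\ast})$, which forces $p=11$ and $p=5$ respectively, each having $p-1$ divisible by $5$ or $4$ — contradicting the hypothesis. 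By contrast $\Z/2\Z\times\Z/10\Z$, $\Z/2\Z\times\Z/12\Z$, and $\Z/16\Z$ arise from genus-$0$ families (built from the rational $\Z/10\Z$, $\Z/12\Z$, and $\Z/8\Z$ structures), so the quadratic field over which the extra point appears varies and coincides with $\Q(\sqrt{p^\ast})$ for infinitely many admissible $p$; these three cannot be eliminated and are realized.

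The main obstacle I anticipate is the core reduction of the second paragraph: making precise, uniformly in $\ell^k$, that the field of definition of a torsion point on an arbitrary $E/\Q$ inside $\Q(\zeta_p)$ forces a subfield of degree divisible by $3$, $4$, or $5$ in every case except those yielding the three survivors. The odd part is controlled cleanly by isogeny characters together with the finiteness of admissible isogeny primes, but the $2$-primary analysis — where the subgroup generated by a halving of a rational $2$-power point need not be Galois-stable, so the field of definition must be extracted from the $2$-division polynomials rather than from a single character — is the delicate part, and it is also where one must confirm that $\Z/16\Z$ genuinely escapes the degree-$4$ obstruction by being attainable over the quadratic field $\Q(\sqrt{p^\ast})$.
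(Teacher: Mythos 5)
Your overall strategy---descend all torsion to the unique quadratic subfield $\Q(\sqrt{p^\ast})$ and then quote Najman's quadratic classification---is close in spirit to the paper's machinery (its Lemma \ref{4pairinggeneralmore} is exactly this descent for odd prime-power parts), but your core reduction has a genuine gap at $\ell=43$ and $\ell=67$. You assert that ``a short analysis of the admissible orders of $\phi$'' forces $\operatorname{ord}(\phi)\le 2$ whenever $3,4,5\nmid p-1$. This is false as stated: the hypothesis still permits $7\mid p-1$ (e.g.\ $p=239$, $p-1=2\cdot 7\cdot 17$) and $11\mid p-1$ (e.g.\ $p=23$), and then $\gcd(p-1,\ell-1)=14$ resp.\ $22$, so the divisibility constraints on the isogeny character allow a point of order $43$ in a degree-$14$ subfield of $\Q(\zeta_{239})$, or of order $67$ in a degree-$22$ subfield of $\Q(\zeta_{23})$. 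The Weil-pairing relation $\phi\phi'=\chi_\ell$ and the finiteness of isogeny primes do not close this: one needs extra arithmetic input specific to the finitely many $j$-invariants involved. The paper supplies it in Lemma \ref{43isogeny}: these (CM) curves have conductor $\ell^2$, so by the unramifiedness of $\rho_{E,\ell}$ away from $\ell N_E$ (Corollary \ref{unramifiedcorollary}) the abelian field of definition lies in $\Q(\zeta_{\ell^k})$, forcing $p=\ell$, which is excluded because $3\mid \ell-1$; alternatively, Lozano-Robledo's bound $[\Q(P):\Q]\ge(\ell-1)/2$ (Theorem \ref{FieldofDefinition}) exceeds $(\ell-1)/3\ge\gcd(p-1,\ell-1)$. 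Without one of these ingredients your claim that $E(\Q(\zeta_p))_{\text{tors}}$ equals $E(\Q)_{\text{tors}}$ or $E(\Q(\sqrt{p^\ast}))_{\text{tors}}$ is unproven precisely where it matters, and so $\Z/43\Z$ and $\Z/67\Z$ are not eliminated. Notably, you flagged the $2$-primary part as the delicate step and called the odd part ``controlled cleanly''; it is the odd part that breaks.

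Secondary problems: your claim that a $2$-power torsion point is defined over a field of $2$-power degree is false in general---a point of order $2$ has a cubic field of definition when the $2$-division polynomial is irreducible. It is repairable here: $[\Q(E[2^k]):\Q]=2^a3^b$ with $b\le 1$ since the kernel of $\operatorname{GL}_2(\Z/2^k\Z)\to\operatorname{GL}_2(\Z/2\Z)$ is a $2$-group, so inside $\Q(\zeta_p)$ with $3\nmid p-1$ and $4\nmid p-1$ the degree divides $2$; once fixed, this is actually a cleaner route to excluding $\Z/2\Z\times\Z/16\Z$ than the paper's orbit-counting Lemma \ref{mod16}. Also, $\Z/11\Z$ does not occur over \emph{any} quadratic field for rational $E$ (it is absent from Theorem \ref{najmanquadratic}), so your assertion that it arises over $\Q(\sqrt{-11})$ is wrong, though harmless since your conclusion would follow directly from Najman if the descent held. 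Finally, your closing realization claims exceed the theorem (which only asserts an upper bound on the list) and overstate what is known: realization of $\Z/16\Z$ over $\Q(\zeta_p)$ for the admissible primes is open, and Banwait--Derickx rule it out for all $p\equiv 3\pmod 4$ with $p<800$; the known example has $p=41$, where $4\mid p-1$. The justification that full $p$-torsion makes $\rho_{E,p}$ trivial is also misstated (it only forces $\Q(E[p])\subseteq\Q(\zeta_p)$), though the needed bound $m\in\{1,2\}$ follows anyway from the roots of unity in $\Q(\zeta_p)$ together with Theorem \ref{allgroupslist}.
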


It is also natural to consider the possible torsion subgroups when $K$ is an infinite extension. In this case, the Mordell–Weil theorem does not apply, so it is unknown whether $E(K)_{\text{tors}}$ is finite. However, for certain special infinite extensions $K$, it can be shown that $E(K)_{\text{tors}}$ is finite and takes only finitely many possible forms as $E/\mathbb{Q}$ varies. For example, Fujita classified all possible groups $E(\mathbb{Q}(2^\infty))_{\text{tors}}$ in \cite{Fujita}, where $\mathbb{Q}(2^\infty)$ denotes the compositum of all quadratic fields. Similarly, Daniels, Lozano-Robledo, Najman, and Sutherland classified all possible groups $E(\mathbb{Q}(3^\infty))_{\text{tors}}$ in \cite{MaximalCubic}, where $\mathbb{Q}(3^\infty)$ is the compositum of all cubic fields.

A recent development by Chou \cite{Chou} classified all possible groups $E(\mathbb{Q}^{ab})_{\text{tors}}$, where $\mathbb{Q}^{ab}$ is the maximal abelian extension of $\mathbb{Q}$. Prior to this, Ribet's theorem \cite{Ribet} established the finiteness of $E(\mathbb{Q}^{ab})_{\text{tors}}$ for any elliptic curve $E/\mathbb{Q}$. However, Ribet’s result did not imply that $E(\mathbb{Q}^{ab})_{\text{tors}}$ can only realize finitely many group structures, since Merel’s bound \cite{Merel} does not extend to infinite extensions. Chou’s classification allowed the description of $E(K)_{\text{tors}}$ for any abelian number field $K$ as $E/\mathbb{Q}$ varies, with all possible groups being subgroups of those appearing in Theorem 1.2 of \cite{Chou}. The remaining task is to eliminate from this list the groups that are not realized by any $E/\mathbb{Q}$.

\begin{theorem}[Chou, \cite{Chou}]
  
      Let $E/\mathbb{Q}$ be an elliptic curve. 
    Then $E(\mathbb{Q}^{ab})_{\text{tors}}$ is isomorphic to one of the following
groups
$$ E(\mathbb{Q}^{ab})_{\text{tors}} \simeq
  \begin{cases}
      \mathbb{Z} / N \mathbb{Z} & \text{ with } 1\leq N\leq 19, \text{ or } N=21,25,27,37,43,67,163, \\
           \mathbb{Z} / 2 \mathbb{Z} \times      \mathbb{Z} /2 N \mathbb{Z} & \text{ with } 1\leq N\leq 9,\\
           \mathbb{Z} /3  \mathbb{Z}  \times    \mathbb{Z} /3 N \mathbb{Z} & \text{ with } N=1,2,3, \\
              \mathbb{Z} /4  \mathbb{Z} \times    \mathbb{Z} /4N \mathbb{Z} &\text{ with } N=1,2,3,4, \\
                 \mathbb{Z} /5  \mathbb{Z} \times    \mathbb{Z} /5 \mathbb{Z},\\
                    \mathbb{Z} /6  \mathbb{Z} \times    \mathbb{Z} /6 \mathbb{Z},\\
                       \mathbb{Z} /8  \mathbb{Z} \times    \mathbb{Z} /8 \mathbb{Z}.
  \end{cases}  $$
Each of the groups listed above is realized as  $E(\mathbb{Q}^{ab})_{\text{tors}}$ for some elliptic curve $E/\mathbb{Q}$.
    \end{theorem}

In the result of Gonz\'{a}lez-Jim\'{e}nez and Najman \cite{quarticgjn}, it is shown that if $K$ is a number field whose degree is not divisible by $2, 3, 5,$ or $7$, then for any elliptic curve $E/\mathbb{Q}$ we have
\begin{equation*}
E(K)_{\text{tors}} = E(\mathbb{Q})_{\text{tors}}.
\end{equation*}
We obtain a similar result under the additional assumption that $K/\mathbb{Q}$ is a Galois extension, relaxing the divisibility condition on $2$ to $4$. Similar to their result, we show that many torsion points are either rational points on $E$ or correspond to rational points on a quadratic twist of $E$.
 This allows us to further restrict the possible torsion group structures using Mazur’s theorem.

Let $\mu_n$ denote the set of $n$-th roots of unity for a positive integer $n$, and let $\zeta_p$ be a primitive $p$-th root of unity. For a prime number $p$, define
\begin{equation*}
\mu_{p^\infty} = \{\omega \in \mathbb{C} : \omega^{p^k} = 1 \text{ for some integer } k \geq 0 \}.
\end{equation*}
Note that $\mathbb{Q}(\mu_{p^\infty})$ is the field obtained by adjoining to $\mathbb{Q}$ all primitive $p^n$-th roots of unity for all positive integers $n$.

Gu\v{z}vi\'{c} and Krijan \cite{guzvickrijan} showed that for all primes $p > 3$,
\begin{equation*}
E(\mathbb{Q}(\mu_{p^\infty}))_{\text{tors}} = E(\mathbb{Q}(\zeta_p))_{\text{tors}}.
\end{equation*}
This result enables us to classify $E(\mathbb{Q}(\mu_{p^\infty}))_{\text{tors}}$ for primes $p$ such that $p-1$ is not divisible by $3, 4, 5, 7,$ or $11$.  
Furthermore, Gu\v{z}vi\'{c} and Vukorepa \cite{guzvicvukorepa} classified all possible torsion subgroups appearing as $E(\mathbb{Q}(\zeta_p))_{\text{tors}}$ for $p = 5, 7, 11$.  
Our work extends the classification of $E(\mathbb{Q}(\zeta_p))_{\text{tors}}$ to all primes $p$.  
It is important to note that while our classification eliminates most non-realizable torsion subgroups for all primes, some groups in our list may still not be realized for any specific prime $p$.

Throughout this paper, elliptic curves are identified using their LMFDB labels.

In several parts of our analysis, we rely on computations performed with \texttt{SageMath}. These codes are publicly available on GitHub at \\
\url{https://github.com/omeravci372742/torsion-subgroups-sagemath}.

\vspace{3mm}

\noindent \textbf{Acknowledgements.} 
I would like to thank Ekin Ozman for her guidance
during my master’s thesis, from which this paper 
has grown. I am also grateful to Álvaro Lozano-Robledo for his insights on the connection between
ramified primes and the field of definition of torsion points. Special thanks to Filip Najman for
his helpful discussions, comments, and for 
bringing related work by his students to my 
attention. I would also like to express my 
appreciation to the anonymous referee for their 
valuable suggestions, particularly the connection 
between the conductor and quadratic twists, and 
the insightful idea of using division polynomials 
to handle all twists simultaneously. I am additionally 
thankful to the referee for pointing out the method of 
factoring over local fields to obtain faster results.
Finally, I am 
deeply thankful to Sueda Senturk Avci for her unwavering 
support and assistance with computations, 
especially those involving \texttt{SageMath}.

\section{Torsion of Rational Elliptic Curves}

In this section, we will present some theorems about the classification of $E(K)_{\text{tors}}$.

\begin{theorem}[Mazur, \cite{Mazur}]
   Let $E/\mathbb{Q}$ be an elliptic curve. Then
  $$E(\mathbb{Q})_{\text{tors}} \simeq 
  \begin{cases}
      \mathbb{Z} / N \mathbb{Z} & \text{ with } 1\leq N\leq 10 \text{ or } N=12 \text{ or},  \\
           \mathbb{Z} / 2 \mathbb{Z} \times      \mathbb{Z} /2 N \mathbb{Z} & \text{ with } 1\leq N\leq 4.
  \end{cases}  $$ 
  All of the 15 torsion subgroups appear infinitely often.
\end{theorem}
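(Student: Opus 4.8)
The plan is to translate the statement into the geometry of modular curves and then to decide, structure by structure, whether the relevant moduli space carries non-cuspidal rational points. A pair $(E,P)$ consisting of an elliptic curve over $\mathbb{Q}$ together with a point $P$ of exact order $N$ is classified up to isomorphism by a non-cuspidal rational point of the modular curve $X_1(N)$, and a full level structure $\mathbb{Z}/2\mathbb{Z}\times\mathbb{Z}/2N\mathbb{Z}$ corresponds to a rational point of the curve $X_1(2,2N)$. Proving the theorem therefore reduces to showing that these curves have infinitely many non-cuspidal rational points exactly for the structures on the list, and only cusps otherwise.

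First I would dispose of the admissible structures and the ``infinitely often'' claim simultaneously. For $1\le N\le 10$, for $N=12$, and for $\mathbb{Z}/2\mathbb{Z}\times\mathbb{Z}/2N\mathbb{Z}$ with $1\le N\le 4$, the associated modular curve has genus $0$ and carries a rational cusp; hence it is $\mathbb{Q}$-isomorphic to $\mathbb{P}^1$ and has infinitely many rational points. Removing the finitely many cuspidal or degenerate values leaves a one-parameter family of elliptic curves over $\mathbb{Q}$ realizing the given torsion, which establishes both existence and infinitude for all fifteen groups. Next come the borderline levels $N=11,14,15$ (together with a few two-torsion variants), where $X_1(N)$ has genus $1$ and is itself an elliptic curve over $\mathbb{Q}$. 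Here I would write the curve down explicitly, compute its Mordell--Weil group, and check that it has rank $0$ with every rational point a cusp; since no non-cuspidal rational point exists, no elliptic curve over $\mathbb{Q}$ can have a point of these orders, eliminating them.

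The genuine obstacle is the range of large levels, above all the prime orders $\ell=13,17,19$ and $\ell\ge 23$, where $X_1(\ell)$ and $X_0(\ell)$ have higher genus and a direct Mordell--Weil computation is hopeless. The heart of the argument is to bound rational points on $X_0(\ell)$ by studying the Jacobian $J_0(\ell)$: one introduces the Eisenstein ideal $\mathfrak{I}$ in the Hecke algebra, analyses the cuspidal subgroup and the Eisenstein quotient $J_0(\ell)/\mathfrak{I}J_0(\ell)$, and shows that this quotient has Mordell--Weil rank $0$ over $\mathbb{Q}$. Combining the resulting finiteness with reduction arguments at auxiliary primes forces every non-cuspidal rational point of $X_0(\ell)$ to come from a curve with complex multiplication or from one of finitely many sporadic points, none of which lifts to a rational point of order $\ell$ on $X_1(\ell)$. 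Descending this information back to $X_1$ kills all remaining prime orders, and a companion analysis of the prime-power cases $4,8,9,16,25,27,\dots$ pins down exactly which $2$- and $3$-power torsion survives (orders $8$ and $9$, but not $16,25,27$). This Eisenstein-ideal step is by far the most delicate part of the proof and is where essentially all of the difficulty resides.
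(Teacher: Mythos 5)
The paper does not prove this statement at all: it is quoted as background with a citation to Mazur, so there is no in-paper argument to compare against, and your proposal must be measured against the actual literature proof. At the level of architecture your outline matches it: the fifteen admissible structures and the ``infinitely often'' claim do follow from the genus-$0$ curves $X_1(N)$ ($1\le N\le 10$, $N=12$) and $X_1(2,2N)$ ($1\le N\le 4$) having a rational cusp, hence being $\mathbb{Q}$-isomorphic to $\mathbb{P}^1$; the orders $11,14,15$ (and the structures $(2,10)$, $(2,12)$) are killed by rank-$0$ genus-$1$ modular curves whose rational points are all cusps; and the elimination of large prime orders is indeed Mazur's Eisenstein-ideal analysis of $J_0(\ell)$ and its Eisenstein quotient, which is where the depth lies.

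There is, however, one concrete error in the way you route the remaining cases. You propose to eliminate $\ell=13$ by ``bounding rational points on $X_0(\ell)$'' via the Eisenstein quotient of $J_0(\ell)$, but $X_0(13)$ has genus $0$: it is $\mathbb{Q}$-isomorphic to $\mathbb{P}^1$, rational $13$-isogenies form a one-parameter family, and $J_0(13)$ is trivial, so the entire mechanism you describe vanishes at this level and can give no information. The non-existence of rational points of order $13$ must be (and historically was, by Mazur and Tate, before the Eisenstein-ideal paper) proved directly on $X_1(13)$, a genus-$2$ curve, via a descent on a quotient of $J_1(13)$ showing its Mordell--Weil group is finite and that the finitely many rational points are cusps. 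Relatedly, $\ell=17$ and $19$ do not need the Eisenstein machinery either ($X_0(17)$ and $X_0(19)$ have genus $1$ and rank $0$, and one checks the finitely many non-cuspidal points do not lift to rational $\ell$-torsion on $X_1(\ell)$), though invoking the heavier tool there is harmless rather than wrong. Finally, be aware that your text is a roadmap rather than a proof: the assertions that carry all the weight --- finiteness of the Mordell--Weil group of the Eisenstein quotient, the reduction/formal-immersion step forcing rational points of $X_0(\ell)$ into the cusps, and the prime-power eliminations for $16,25,27$ --- are named but not established, and each is a substantial theorem in its own right.
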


\begin{theorem}[Najman, \cite{NajmanQuadratic}]
\label{najmanquadratic}
  Let $E/\mathbb{Q}$ be an elliptic curve defined over $\mathbb{Q}$, and let $K$
be a quadratic number field. Then
$$E(K)_{\text{tors}} \simeq 
  \begin{cases}
      \mathbb{Z} / N \mathbb{Z} & \text{ with } 1\leq N\leq 10 \text{ or } N=12,15,16,\text{ or}\\
           \mathbb{Z} / 2 \mathbb{Z} \times      \mathbb{Z} /2 N \mathbb{Z} & \text{ with } 1\leq N\leq 6 \text{ or}\\
           \mathbb{Z} /3  \mathbb{Z}  \times    \mathbb{Z} /3 N \mathbb{Z} & \text{ with } N=1,2, \text{ only if } K=\mathbb{Q}(\sqrt{-3}) \text{ or}\\
              \mathbb{Z} /4  \mathbb{Z} \times    \mathbb{Z} /4 \mathbb{Z} & \text{ only if } K=\mathbb{Q}(\sqrt{-1}).        
  \end{cases}  $$
  Each of these groups, except for $\mathbb{Z}/15\mathbb{Z}$, appears as the torsion structure over a quadratic field for infinitely many rational elliptic curves $E$. The elliptic curves \lmfdbec{50}{b}{3} and \lmfdbec{50}{a}{2} have $15$-torsion over $\mathbb{Q}(\sqrt{5})$, \lmfdbec{50}{b}{4} and \lmfdbec{450}{g}{4} have $15$-torsion over $\mathbb{Q}(\sqrt{-15})$. These are the only
rational curves having non-trivial $15$-torsion over any quadratic field.
\end{theorem}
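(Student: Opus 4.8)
The plan is to start from the classification of torsion subgroups over \emph{arbitrary} quadratic fields due to Kenku--Momose and Kamienny, which already provides a finite list of candidate groups, and then to use the hypothesis that $E$ is defined over $\Q$ to cut this list down and pin the exceptional fields. The engine of the argument is the quadratic twist. Writing $K=\Q(\sqrt d)$, letting $\sigma$ generate $\Gal(K/\Q)$, and letting $E^d$ be the twist of $E$ by $d$, there is an isomorphism $\psi\colon E\to E^d$ defined over $K$ satisfying $\psi^\sigma=-\psi$. It identifies the $(+1)$-eigenspace of $\sigma$ on $E(K)$ with $E(\Q)$ and the $(-1)$-eigenspace with $E^d(\Q)$. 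Since $2$ is invertible modulo any odd $N$, this yields the clean splitting $E(K)[N]\cong E(\Q)[N]\oplus E^d(\Q)[N]$ for odd $N$, so the odd part of $E(K)_{\text{tors}}$ is completely controlled by Mazur's theorem applied to the two rational curves $E$ and $E^d$.

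First I would control the full $2$-torsion and the non-cyclic direction. The three $2$-torsion points are the roots of a cubic over $\Q$, and such a cubic acquires a root in a quadratic field only when it already has a rational root; hence $E$ gains a $2$-torsion point over a quadratic $K$ precisely when it has a rational $2$-torsion point. For the non-cyclic part, if $E(K)_{\text{tors}}\cong\Z/m\Z\times\Z/n\Z$ with $m\mid n$, then $E[m]\subseteq E(K)$, and Galois-equivariance together with surjectivity of the Weil pairing $e_m\colon E[m]\times E[m]\to\mu_m$ force $\Q(\zeta_m)\subseteq K$, so $\varphi(m)\le 2$ and $m\in\{1,2,3,4,6\}$; this is what makes $\Z/3\Z\times\Z/3N\Z$ occur only over $K=\Q(\sqrt{-3})=\Q(\zeta_3)$ and $\Z/4\Z\times\Z/4\Z$ only over $K=\Q(\sqrt{-1})=\Q(\zeta_4)$. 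Feeding the odd-part splitting, the Weil-pairing constraint, and Mazur's list into one another then bounds $|E(K)_{\text{tors}}|$ and kills the spurious structures left on the Kenku--Momose list (the cyclic groups $\Z/11\Z,\Z/13\Z,\Z/14\Z,\Z/17\Z,\Z/18\Z$, and so on): for instance, a point of order $14$ over $K$ would force a rational $7$-torsion point on $E$ or its twist together with a $2$-torsion point over $K$, and since quadratic twists share their $2$-division field the curve carrying the $7$-torsion would then also have rational $2$-torsion, violating Mazur's bound.

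To show each surviving group is realized, and realized infinitely often, I would organize the analysis by the relevant modular curve $X_1(N)$ or $X_1(m,n)$. When this curve has genus $0$ it carries a rational parametrization, yielding infinitely many $j$-invariants in $\Q$ and hence infinitely many non-isomorphic $E/\Q$, after which a suitable twist places the torsion over a quadratic field. For $\Z/16\Z$ the modular curve has positive genus, but it carries a degree-$2$ map to a rational curve whose deck involution is a diamond operator and therefore fixes the $j$-invariant; the fibres of this map are conjugate pairs of quadratic points with \emph{rational} $j$, producing infinitely many $E/\Q$ with a point of order $16$ over a quadratic field.

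The hard part will be the sporadic group $\Z/15\Z$. Its modular curve $X_1(15)$ has genus $1$, so no rational parametrization is available and one should not expect an infinite family; instead one must determine the finitely many relevant points explicitly. The plan is to compute the rational points on the elliptic model of $X_1(15)$ together with the quadratic points whose $j$-invariant lies in $\Q$, match them to elliptic curves defined over $\Q$, and verify that the only rational curves acquiring a point of order $15$ over a quadratic field are $50B1$ and $50A3$ over $\Q(\sqrt 5)$ and $50B2$ and $450B4$ over $\Q(\sqrt{-15})$. Showing that this short list is \emph{exhaustive}, rather than merely nonempty, is where the genuine work lies, and it is the one step with no purely group-theoretic shortcut.
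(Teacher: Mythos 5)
This statement is not proved in the paper at all: it is Najman's classification, imported verbatim with the citation \cite{NajmanQuadratic}, so there is no internal proof to compare against. Measured instead against Najman's actual published argument, your outline reconstructs it essentially correctly: the outer bound from the Kenku--Momose/Kamienny classification over quadratic fields, the twist decomposition $E(K)[N]\cong E(\Q)[N]\oplus E^d(\Q)[N]$ for odd $N$ (the same splitting the present paper re-uses as Lemma \ref{veryuseful}), the observation that a cubic acquires a $2$-torsion root over a quadratic field only if it already has a rational root, the Weil-pairing constraint $\Q(\zeta_m)\subseteq K$ pinning $\Q(\sqrt{-3})$ and $\Q(\sqrt{-1})$, and the elimination of $\Z/11\Z,\Z/13\Z,\Z/14\Z,\Z/18\Z$ by feeding the splitting into Mazur's theorem (your $14$-torsion argument, using that quadratic twists share their $2$-division field, is exactly the mechanism the paper itself deploys in Lemma \ref{mainlemma}). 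Two caveats. First, a harmless slip: $\Z/17\Z$ is not on the Kenku--Momose quadratic list (the cyclic orders there are $1\leq N\leq 16$ and $N=18$), so there is nothing to eliminate, though your splitting argument would dispose of it anyway. Second, and more substantively, the two places where the theorem requires genuine computation are named but not executed: the exhaustive determination of quadratic points with rational $j$-invariant on the genus-one curve $X_1(15)$ (which is where the four curves $50B1$, $50A3$, $50B2$, $450B4$ and the fields $\Q(\sqrt{5})$, $\Q(\sqrt{-15})$ come from, via the finiteness of the relevant Mordell--Weil groups), and the realization of $\Z/16\Z$ via the fact that the hyperelliptic involution of $X_1(16)$ is a diamond automorphism, hence preserves $j$. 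You correctly identify both as the load-bearing steps, but as written the proposal is a faithful plan for Najman's proof rather than a complete proof; nothing in it would fail, but the 15-torsion exhaustiveness claim in particular is carried entirely by deferred explicit computation.
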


\begin{theorem}[Chou, \cite{Chou}, Corollary to Theorem 1.2]\label{allgroupslist}
      Let $E/\mathbb{Q}$ be an elliptic curve. Let $K$ be an abelian number field.
    Then $E(K)_{\text{tors}}$ is isomorphic to one of the following
groups:
$$E(K)_{\text{tors}} \simeq 
  \begin{cases}
      \mathbb{Z} / N \mathbb{Z} & \text{ with } 1\leq N\leq 19, \text{ or } N=21,25,27,37,43,67,163, \\
           \mathbb{Z} / 2 \mathbb{Z} \times      \mathbb{Z} /2 N \mathbb{Z} & \text{ with } 1\leq N\leq 9,\\
           \mathbb{Z} /3  \mathbb{Z}  \times    \mathbb{Z} /3 N \mathbb{Z} & \text{ with } N=1,2,3, \\
              \mathbb{Z} /4  \mathbb{Z} \times    \mathbb{Z} /4N \mathbb{Z} &\text{ with } N=1,2,3,4, \\
                 \mathbb{Z} /5  \mathbb{Z} \times    \mathbb{Z} /5 \mathbb{Z},\\
                    \mathbb{Z} /6  \mathbb{Z} \times    \mathbb{Z} /6 \mathbb{Z},\\
                       \mathbb{Z} /8  \mathbb{Z} \times    \mathbb{Z} /8 \mathbb{Z}.
  \end{cases}  $$
\end{theorem}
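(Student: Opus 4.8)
The plan is to derive this corollary as an essentially formal consequence of Chou's Theorem 1.2 (the classification of $E(\mathbb{Q}^{ab})_{\text{tors}}$ stated above), together with one purely group-theoretic verification. First I would observe that $\mathbb{Q}^{ab}$, being the maximal abelian extension of $\mathbb{Q}$, contains every abelian Galois number field $K$; this is immediate from the definition and is also the content of the Kronecker--Weber theorem, which identifies $\mathbb{Q}^{ab}$ with the compositum of all cyclotomic fields. Given the inclusion $K \subseteq \mathbb{Q}^{ab}$, we get $E(K) \subseteq E(\mathbb{Q}^{ab})$, and restricting to torsion yields an inclusion of finite abelian groups $E(K)_{\text{tors}} \subseteq E(\mathbb{Q}^{ab})_{\text{tors}}$. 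By Chou's Theorem 1.2 the latter is isomorphic to one of the groups $G$ in the displayed list, so it suffices to show that every subgroup of such a $G$ is again on the list.

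For the group theory I would use the standard domination of invariant factors under passage to subgroups: if $G \cong \mathbb{Z}/a\mathbb{Z} \times \mathbb{Z}/b\mathbb{Z}$ with $a \mid b$, then every subgroup $H$ is of the form $H \cong \mathbb{Z}/c\mathbb{Z} \times \mathbb{Z}/d\mathbb{Z}$ with $c \mid d$, $c \mid a$, and $d \mid b$. (Every group on the list has rank at most $2$, so this two-invariant description applies to all of them, consistent with the fact that elliptic-curve torsion subgroups also have rank at most $2$.) The theorem then reduces to the finite claim that, for each listed $G$, each pair $(c,d)$ satisfying $c \mid a$, $d \mid b$, $c \mid d$ produces a group $\mathbb{Z}/c\mathbb{Z} \times \mathbb{Z}/d\mathbb{Z}$ that is again present in the list; equivalently, the list is closed under taking subgroups.

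I would carry out this check family by family. The cyclic case is the divisor condition $\mathbb{Z}/d\mathbb{Z}$ with $d \mid N$: for each cyclic $N$ in the list every divisor $d$ is again a listed cyclic group (for instance the divisors of $21,25,27,37,43,67,163$ are all at most $19$ or equal to the number itself). For the rank-$2$ families one checks that the second invariant $d$ ranges over divisors of the top invariant while the first invariant $c$ is constrained to $c \mid a \in \{2,3,4,5,6,8\}$, and each resulting pair reappears in the same or a smaller family; the extreme cases $\mathbb{Z}/2\mathbb{Z} \times \mathbb{Z}/18\mathbb{Z}$, $\mathbb{Z}/4\mathbb{Z} \times \mathbb{Z}/16\mathbb{Z}$, $\mathbb{Z}/8\mathbb{Z} \times \mathbb{Z}/8\mathbb{Z}$ and their subgroups are the ones worth writing out explicitly. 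The only obstacle is bookkeeping: ensuring no subgroup escapes the list, which is why reducing to the invariant-factor constraints $c \mid a$ and $d \mid b$ before enumerating is the step that keeps the verification short.
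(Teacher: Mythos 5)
Your proposal is correct and matches the intended derivation: the paper imports this statement from Chou without proof, and there it is deduced exactly as you do --- every abelian Galois number field $K$ satisfies $K \subseteq \mathbb{Q}^{ab}$, so $E(K)_{\text{tors}}$ is a subgroup of $E(\mathbb{Q}^{ab})_{\text{tors}}$, and the displayed list is closed under passage to subgroups by the invariant-factor divisibility ($c \mid a$, $d \mid b$, $c \mid d$) that you invoke. Your finite closure check is sound, including the extreme cases $\mathbb{Z}/2\mathbb{Z} \times \mathbb{Z}/18\mathbb{Z}$, $\mathbb{Z}/4\mathbb{Z} \times \mathbb{Z}/16\mathbb{Z}$, and $\mathbb{Z}/8\mathbb{Z} \times \mathbb{Z}/8\mathbb{Z}$ you single out.
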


  \begin{lemma}[Chou, \cite{Chouquartic}]\label{veryuseful}
    Let $E$ be an elliptic curve defined over $K$, $\alpha$ square-free in $K$.
If $n$ is odd, then there exists an isomorphism
\begin{equation*}E(K(\sqrt{\alpha}))[n]\cong E(K)[n] \oplus E_\alpha(K)[n] \end{equation*}
where $E_\alpha$ denotes the $\alpha$-quadratic twist of $E$.
\end{lemma}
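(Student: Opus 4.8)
The plan is to exploit the action of $\Gal(K(\sqrt{\alpha})/K)$ on the $n$-torsion together with the fact that an odd $n$ makes $2$ invertible modulo $n$, so the torsion group splits into $\pm$-eigenspaces for the nontrivial automorphism. Write $L = K(\sqrt{\alpha})$ and $G = \Gal(L/K) = \langle \sigma \rangle$, where $\sigma$ is the nontrivial element, so that $\sigma(\sqrt{\alpha}) = -\sqrt{\alpha}$ and $\sigma^2 = \mathrm{id}$. Then $E(L)[n]$ is naturally a module over $\Z[G]$, and the whole proof is a study of this module structure.

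First I would decompose $E(L)[n]$ into eigenspaces. Since $n$ is odd, multiplication by $2$ is an automorphism of $E(L)[n]$, so for any $P \in E(L)[n]$ one may write
\[ P = 2^{-1}(P + \sigma P) + 2^{-1}(P - \sigma P), \]
where the first summand lies in $E(L)[n]^{+} := \{\,P : \sigma P = P\,\}$ and the second in $E(L)[n]^{-} := \{\,P : \sigma P = -P\,\}$. Because $\sigma P = P$ and $\sigma P = -P$ together force $2P = 0$, hence $P = 0$, the two eigenspaces meet trivially, and we obtain an internal direct sum $E(L)[n] = E(L)[n]^{+} \oplus E(L)[n]^{-}$.

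Next I would identify the two summands. The invariants $E(L)[n]^{+}$ are exactly the points fixed by $G$, which is $E(K)[n]$. For the minus part I would use the twisting isomorphism: there is an isomorphism $\phi : E_\alpha \to E$ defined over $L$ satisfying the Galois-compatibility $\phi^{\sigma} = [-1]\circ\phi$, which is precisely the defining property of the quadratic twist by $\alpha$. I would then check that $\phi$ carries $E_\alpha(K)[n]$ isomorphically onto $E(L)[n]^{-}$: if $Q \in E_\alpha(K)[n]$ then $\sigma Q = Q$, so $\sigma(\phi(Q)) = \phi^{\sigma}(\sigma Q) = -\phi(Q)$, placing $\phi(Q)$ in $E(L)[n]^{-}$; conversely, using $(\phi^{-1})^{\sigma} = (\phi^{\sigma})^{-1} = -\phi^{-1}$, every $P \in E(L)[n]^{-}$ has $\phi^{-1}(P)$ fixed by $\sigma$, hence in $E_\alpha(K)[n]$. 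Combining the two identifications yields $E(L)[n] \cong E(K)[n] \oplus E_\alpha(K)[n]$.

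The routine parts are the eigenspace bookkeeping and checking that all maps in sight are group homomorphisms. The one point demanding care — the main obstacle — is pinning down the exact Galois-twisting relation $\phi^{\sigma} = [-1]\circ\phi$ for a fixed normalization of $E_\alpha$ (for instance, taking $E : y^2 = x^3 + ax + b$ and $E_\alpha : y^2 = x^3 + a\alpha^2 x + b\alpha^3$ with $\phi(x,y) = (x/\alpha,\, y/\alpha^{3/2})$, for which one computes directly that $\phi^{\sigma}(x,y) = (x/\alpha,\, -y/\alpha^{3/2}) = [-1]\phi(x,y)$), and confirming it holds uniformly on all torsion points. Once this relation is fixed, the sign computations above follow mechanically. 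The hypothesis that $n$ be odd is used only to invert $2$, and it is genuinely needed, since the $\pm$-decomposition breaks down as soon as $2 \mid n$.
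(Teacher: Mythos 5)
Your proof is correct and takes essentially the same route as the source: the paper states this lemma without proof, citing Chou, and the cited argument is exactly your eigenspace decomposition — use oddness of $n$ to invert $2$ on $E(K(\sqrt{\alpha}))[n]$, split into $\pm 1$-eigenspaces for the conjugation $\sigma$, identify the $+$-part with $E(K)[n]$, and identify the $-$-part with $E_\alpha(K)[n]$ via a twisting isomorphism $\phi$ satisfying $\phi^{\sigma} = [-1]\circ\phi$. Your explicit normalization of the twist and verification of the relation $\phi^{\sigma}=[-1]\circ\phi$ addresses the only point that genuinely needs checking, so there are no gaps.
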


\begin{theorem}[Lozano-Robledo, \cite{fieldofdefinition}]\label{FieldofDefinition}
    Let $p\geq 11$, be a prime other than $13$ or $37$. Let $E/\Q$ be an elliptic curve and $P$ be a point of order $p$ on $E$. Then, $[\Q(P):\Q]\geq (p-1)/2$ holds, where $\Q(P)=\Q(x(P),y(P))$ is the extension of rational numbers by the coordinates of the point $P$ called the field of definition of $P$.
\end{theorem}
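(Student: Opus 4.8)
The plan is to translate the statement into the mod-$p$ Galois representation and then run through the possible images. Fix an identification $E[p]\cong \mathbb{F}_p^2$ and let $\rho=\rho_{E,p}\colon \Gal(\overline{\Q}/\Q)\to \mathrm{GL}_2(\mathbb{F}_p)$ be the associated representation, with image $G$. Since $\Q(P)$ is by definition the fixed field of the stabilizer of $P$, the degree $[\Q(P):\Q]$ equals the size of the $G$-orbit of the nonzero vector $P\in\mathbb{F}_p^2$, so it suffices to bound every such orbit below by $(p-1)/2$. Two facts drive the argument: first, $\det\circ\rho$ is the mod-$p$ cyclotomic character, whence $\det(G)=\mathbb{F}_p^{\times}$ has order $p-1$; second, by the classification of subgroups of $\mathrm{GL}_2(\mathbb{F}_p)$ realized as such images (Dickson's theorem, Serre's work, and the input from modular curves), $G$ is either all of $\mathrm{GL}_2(\mathbb{F}_p)$, contained in a Borel subgroup, contained in the normalizer of a split or non-split Cartan, or has projective image $A_4$, $S_4$, or $A_5$.

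First I would dispose of all but two branches. If $G\supseteq \mathrm{SL}_2(\mathbb{F}_p)$ the orbit is $\mathbb{F}_p^2\setminus\{0\}$, of size $p^2-1$. The exceptional (projectively $A_4,S_4,A_5$) images are severely restricted for $p\geq 11$ and are removed by a direct orbit computation, using $\det(G)=\mathbb{F}_p^{\times}$ to force the scalar subgroup of $G$ to be large. For the Cartan-normalizer images one invokes the triviality of the rational points on $X_{\mathrm{sp}}^{+}(p)$ and $X_{\mathrm{ns}}^{+}(p)$ (Bilu--Parent--Rebolledo and the resolution of the $p=13$ case, which is excluded anyway): for $p\geq 11$, $p\neq 13$, every such rational point is cuspidal or comes from a curve with complex multiplication. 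Thus the only configurations in which the orbit of $P$ can be small are that $E$ has CM, or that $\langle P\rangle$ is a Galois-stable line, i.e. $E$ admits a rational $p$-isogeny.

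This leaves the two genuine branches. In the \emph{isogeny} branch $G$ acts on $\langle P\rangle$ through a character $\chi\colon \Gal(\overline{\Q}/\Q)\to \mathbb{F}_p^{\times}$ and $[\Q(P):\Q]=\operatorname{ord}(\chi)$; by Mazur's theorem on rational isogenies of prime degree such a $\chi$ exists for $p\geq 11$ only when $p\in\{11,13,17,19,37,43,67,163\}$, and discarding $13$ and $37$ leaves the finite list $\{11,17,19,43,67,163\}$. For each I would bound $\operatorname{ord}(\chi)\geq (p-1)/2$ by combining local information at $p$ --- on inertia $I_p$ the character is a power $\omega^{a}$ of the cyclotomic character, with $a\in\{0,1\}$ when $E$ is semistable at $p$, so that the ramified one of $\chi$ and $\omega\chi^{-1}$ has order $p-1$ --- with the global finiteness from Mazur, which for these primes pins the question down to the explicitly known $p$-isogeny classes (the CM curves for $p=43,67,163$). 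In the \emph{CM} branch one works over the quadratic CM field, where $\rho$ becomes abelian and is governed by a Hecke character $\psi$ with $\psi\overline{\psi}=\omega$; computing the orbit of a $p$-torsion point through $\psi$ and its conjugate gives the bound $(p-1)/2$, and realizes equality.

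The main obstacle is precisely these two character/orbit-order estimates. The difficulty is that the formal identity $\chi\cdot(\omega\chi^{-1})=\omega$, or the analogous relation $\psi\overline{\psi}=\omega$, only forces $\operatorname{ord}(\chi)\gtrsim \sqrt{p-1}$ by a group-theoretic count; to reach $(p-1)/2$ one genuinely needs the arithmetic of rational isogenies (local-global control of the isogeny character together with Mazur's finiteness) rather than a formal argument. It is exactly here that the excluded primes enter: $p=13$ admits a non-CM curve with Cartan-normalizer image, and $p=37$ carries a rational $37$-isogeny whose character has order below $(p-1)/2$, so both must be removed. Finally, the CM computation shows the bound is sharp: a CM elliptic curve at a prime $p$ split in its CM field has a point of order $p$ defined over a field of degree exactly $(p-1)/2$.
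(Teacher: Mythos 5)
The paper does not prove this statement at all: it is quoted verbatim as a theorem of Lozano-Robledo \cite{fieldofdefinition}, so the only meaningful comparison is with the proof in that reference, whose broad strategy (interpret $[\Q(P):\Q]$ as the size of the orbit of $P$ under the image $G$ of $\rho_{E,p}$, split into cases by the shape of $G$, handle the Borel case via Mazur's classification of rational $p$-isogenies and explicit isogeny-character orders, and treat CM curves separately) your outline does share. Within that outline, your isogeny branch is essentially right, and your self-diagnosis that the formal relation $\chi\cdot(\omega\chi^{-1})=\omega$ alone only yields a bound of order $\sqrt{p-1}$ is accurate: the real content there is the finite explicit list of $j$-invariants with a rational $p$-isogeny for $p\in\{11,17,19,43,67,163\}$ and the computed orders of their isogeny characters, which is exactly why $p=13$ and $p=37$ (where $X_0(p)$ has small-order isogeny characters, e.g.\ order $3$ at $p=13$ and order $12$ at $p=37$) must be excluded.

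However, there is a genuine gap at the reduction step: you invoke ``the triviality of the rational points on $X_{\mathrm{sp}}^{+}(p)$ \emph{and} $X_{\mathrm{ns}}^{+}(p)$ (Bilu--Parent--Rebolledo).'' No such theorem exists for the non-split curve: Bilu--Parent--Rebolledo treats only the split Cartan case, the non-split Cartan case of Serre's uniformity problem is open to this day, and even the single value $p=13$ for $X_{\mathrm{ns}}^{+}(13)$ was settled only in 2019 by quadratic Chabauty. As written, your argument therefore rests on a false citation precisely where you conclude that a small orbit forces CM or a rational isogeny. Fortunately the step is unnecessary: if $G$ lies in the normalizer $N$ of a non-split Cartan $C_{\mathrm{ns}}$, then $C_{\mathrm{ns}}$ acts simply transitively on $E[p]\setminus\{O\}$, so the orbit of $P$ has size at least $|G\cap C_{\mathrm{ns}}|\geq |G|/2\geq |{\det(G)}|/2=(p-1)/2$, with no input from modular curves; only the on-axis points in the \emph{split} normalizer case genuinely need Bilu--Parent(--Rebolledo) or an isogeny-character argument over the quadratic field cut out by $G/(G\cap C_{\mathrm{sp}})$. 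A second, smaller soft spot is the exceptional case: the ``direct orbit computation forcing the scalar subgroup to be large'' gives only about $(p-1)/6$ (for projective $A_4$; $(p-1)/8$ for $S_4$), short of $(p-1)/2$; to dispose of these images one needs that $A_4$ and $A_5$ lie in $\mathrm{PSL}_2(\mathbb{F}_p)$, contradicting surjectivity of the determinant, together with Serre's inertia-at-$p$ argument for $S_4$ when $p\equiv\pm3\pmod 8$, rather than a pure orbit count.
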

It is important to emphasize that the field of definition $\mathbb{Q}(P)$ of a point $P$ of order $p$ on an elliptic curve $E$ depends on the choice of $P$. For distinct points $P$ and $Q$ of order $p$, the fields $\mathbb{Q}(P)$ and $\mathbb{Q}(Q)$ may differ, both being subfields of $\mathbb{Q}(E[p])$, the field of definition of all $p$-torsion points on $E$.

In this paper, the \emph{field of definition of the $p$-torsion} refers to the field of smallest degree over $\mathbb{Q}$. When focusing on abelian Galois groups, we specify the smallest such field with abelian Galois group, particularly in the proof of Theorem \ref{mostmaintheorem} and Section \ref{realizationsection}, where we study the realization of torsion subgroups. This distinction is essential for clarity in our classification and understanding of torsion subgroup realizations over various fields.

\section{Eliminating Possible Torsion}\label{eliminationsection}

In this section, we present methods developed to eliminate possible torsion subgroups from the list in Theorem \ref{allgroupslist} that can appear as $E(K)_{\text{tors}}$, where $E/\mathbb{Q}$ is an elliptic curve and $K$ is an abelian number field.

\begin{lemma} \label{4pairinggeneralmore}
Let $K$ be a Galois number field of degree $n$, and let $E/\mathbb{Q}$ be an elliptic curve. Let $q$ be an odd prime power, and suppose 
\begin{equation*}
E(K)[q] \cong \mathbb{Z}/q\mathbb{Z}.
\end{equation*}
    \begin{enumerate}[(i)]

    \item If $\gcd(n, \varphi(q)) = 1$, then 
    \begin{equation*}
    E(K)[q] = E(\mathbb{Q})[q].
    \end{equation*}
        
    \item If $\gcd(n, \varphi(q)) = 2$, then there exists a square-free integer $d$ with $\sqrt{d} \in K$ such that
    \begin{equation*}
    E(K)[q] = E(\mathbb{Q}(\sqrt{d}))[q] \cong E_d(\Q)[q].
    \end{equation*}
        \end{enumerate}

\end{lemma}
\begin{proof}
Without loss of generality, we assume $E$ is given by the short Weierstrass equation
\begin{equation*}
E : y^2 = x^3 + A x + B
\end{equation*}
for some $A, B \in \mathbb{Q}$. 

Let $P \in E(K)[q]$ be a point of order $q$. Let $\sigma \in \Gal(K/\mathbb{Q})$. Then
\begin{equation*}
P^\sigma = a P \quad \text{for some } a \in (\mathbb{Z}/q \mathbb{Z})^\times
\end{equation*}
 because $P^\sigma \in E(K)[q]$. 
 
 Applying $\sigma$ to $P$ exactly $n$ times, we get $P^{\sigma^n} = P$, since $\sigma^n = \mathrm{id}$. Similarly, applying $\sigma$ to $P$ exactly $\phi(q)$ times, we get $P^{\sigma^{\phi(q)}} = P$ because $a^{\phi(q)} \equiv 1 \pmod{q}$.

If $\gcd(n, \phi(q)) = 1$, then by the Euclidean algorithm we get
\begin{equation*}
P^\sigma = P \quad\text{for all } \sigma \in \Gal(K/\mathbb{Q}),
\end{equation*}
 so $P$ is defined over $\mathbb{Q}$. Since $P$ generates $E(K)[q]$, it follows that
\begin{equation*}
E(K)[q] = E(\mathbb{Q})[q].
\end{equation*}

If $\gcd(n, \phi(q)) = 2$, then by the Euclidean algorithm,
\begin{equation*}
P^{\sigma^2} = P \quad \text{for all } \sigma\in \Gal(K/\Q),
\end{equation*}
 which implies
\begin{equation*}
a \equiv \pm 1 \pmod{\phi(q)}
\quad \text{and} \quad
P^\sigma = \pm P.
\end{equation*}
Writing $P = (x_0,y_0)$, this means
\begin{equation*}
(x_0^\sigma, y_0^\sigma) = \pm (x_0,y_0) = (x_0, \pm y_0).
\end{equation*}
Therefore, $x_0^\sigma = x_0$
for all $\sigma \in \Gal(K/\Q)$, so $x_0$ is rational. This implies $y_0^2$ is rational, and hence
\begin{equation*}
y_0 \in \mathbb{Q}(\sqrt{d}) \quad \text{and} \quad y_0\sqrt{d} \in \Q
\end{equation*}
for some square-free integer $d$ with $\sqrt{d} \in K$. Thus,
\begin{equation*}
P \in E(\mathbb{Q}(\sqrt{d}))[q]
\end{equation*}
and $P$ generates $E(\mathbb{Q}(\sqrt{d}))[q]$, so
\begin{equation*}
E(K)[q] = E(\mathbb{Q}(\sqrt{d}))[q].
\end{equation*}
When we use the quadratic twist model 
\begin{equation*}
    E_d : d y^2 = x^3 + A x + B,
\end{equation*}
the image of $P$ on $E_d$ becomes the point $P_d = (x, y / \sqrt{d})$. Then $P_d \in E_d(\mathbb{Q})_{\text{tors}}$. We also have
\begin{equation*}
    E(\mathbb{Q}(\sqrt{d}))[q] \cong E_d(\mathbb{Q}(\sqrt{d}))[q].
\end{equation*}
Therefore, $E_d(\mathbb{Q}(\sqrt{d}))[q]$ is generated by the point $P_d$, and so
\begin{equation*}
    E_d(\mathbb{Q}(\sqrt{d}))[q] = E_d(\mathbb{Q})[q].
\end{equation*}
Ultimately, for the square-free integer $d$ satisfying $\sqrt{d} \in K$, we obtain
\begin{equation*}
    E(K)[q] = E(\mathbb{Q}(\sqrt{d}))[q] \cong E_d(\mathbb{Q}(\sqrt{d}))[q] = E_d(\mathbb{Q})[q].
\end{equation*}
This completes the proof by showing that the $q$-torsion is defined over $\mathbb{Q}$ after a suitable twist.
\end{proof}

\begin{lemma}\label{mainlemma}
         Let $K$ be a finite Galois extension of $\mathbb{Q}$ with $[K:\mathbb{Q}]=n$ for some positive integer $n$. Let $E/\mathbb{Q}$ be an elliptic curve. Then,
       \begin{enumerate}[(i)]
           \item if the extension degree $n$ is not divisible by $3$, then $E(K)_{\text{tors}}\not \cong \mathbb{Z}/N\mathbb{Z}$ for $N=14,18,19,21,27,163$, and $E(K)_{\text{tors}}\not \cong \mathbb{Z}/2\mathbb{Z} \times \mathbb{Z}/2N\mathbb{Z}$ for $N=7,9$,
           \item if the extension degree $n$ is not divisible by $3$ and $4$, then $E(K)_{\text{tors}}\not \cong \mathbb{Z}/N\mathbb{Z}$ for $N=13,37$,
           \item  if the extension degree $n$ is not divisible by $3$ and $7$, then $E(K)_{\text{tors}}\not \cong \mathbb{Z}/43\mathbb{Z}$,
            \item  if the extension degree $n$ is not divisible by $3$ and $11$, then $E(K)_{\text{tors}}\not \cong \mathbb{Z}/67\mathbb{Z}$.
            
              \item  if the extension degree $n$ is not divisible by $4$, then $E(K)_{\text{tors}}\not \cong \mathbb{Z}/17\mathbb{Z}$.

     \item  if the extension degree $n$ is not divisible by $4$, and the fields $\mathbb{Q}(\sqrt{5})$ and $\mathbb{Q}(\sqrt{-15})$ are not subfields of $K$, then $E(K)_{\text{tors}}\not \cong \mathbb{Z}/15\mathbb{Z}$.
     
     \item if the extension degree $n$ is not divisible by $5$, then $E(K)_{\text{tors}}\not \cong \mathbb{Z}/11\mathbb{Z}$.
     
     \item if the extension degree $n$ is not divisible by $4$ and $5$ then $E(K)_{\text{tors}}\not \cong \mathbb{Z}/25\mathbb{Z}$.

       \end{enumerate}
\end{lemma}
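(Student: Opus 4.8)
The unifying strategy is to push a generator of each torsion cyclic group down to $\mathbb{Q}$ or to a quadratic field and then read off a contradiction from Mazur's theorem and Theorem \ref{najmanquadratic}, which list the possible torsion over $\mathbb{Q}$ and over quadratic fields. The engine is Lemma \ref{4pairinggeneralmore}: if $P$ generates a cyclic subgroup $\mathbb{Z}/N\mathbb{Z}\subseteq E(K)_{\text{tors}}$, then $\sigma\mapsto a_\sigma$ with $P^\sigma=a_\sigma P$ defines a homomorphism $\Gal(K/\mathbb{Q})\to(\mathbb{Z}/N\mathbb{Z})^\times$ whose image has order dividing $\gcd(n,\phi(N))$, so $P$ is defined over a field of that degree. (The argument of Lemma \ref{4pairinggeneralmore} is stated for a $p$-primary piece, but applies verbatim to the full cyclic group.) Each divisibility hypothesis is engineered precisely to force $\gcd(n,\phi(N))\le 2$: excluding the odd prime factors of $\phi(N)$ from $n$ kills the odd part of the gcd, while the conditions ``$4\nmid n$'' bound its $2$-part. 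Once $\gcd\le 2$, the generator lies in $E(\mathbb{Q})$ or in $E(\mathbb{Q}(\sqrt d))$ for some $\sqrt d\in K$.

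I would dispatch the cyclic cases first. For $N=14,18,19,27,163$ one has $\phi(N)=2\cdot 3^{k}$, so $3\nmid n$ already gives $\gcd(n,\phi(N))\le 2$; for $N=13,37$ one needs additionally $4\nmid n$ (since $\phi(N)=4\cdot 3^{k}$), for $N=17$ only $4\nmid n$ (as $\phi=16$), for $N=11$ only $5\nmid n$, for $N=25$ both $4\nmid n$ and $5\nmid n$, and for $N=43,67$ one excludes, beyond $3$, the remaining odd prime $7$, respectively $11$, dividing $\phi(N)$. In every one of these cases the descent places $\mathbb{Z}/N\mathbb{Z}$ inside $E(\mathbb{Q})$ or inside $E$ over a quadratic field, and none of these $N$ occurs in Mazur's or Najman's lists, a contradiction. (For the large primes $19,43,67,163$ one may alternatively quote Theorem \ref{FieldofDefinition}, since a point of such order cannot live in a field of degree $\le 2$; the cases $13,37$ are excluded from that theorem, so there Najman's list is essential.) The group $N=15$ runs through the same descent via $\phi(15)=8$ and $4\nmid n$, but here Theorem \ref{najmanquadratic} does list $\mathbb{Z}/15\mathbb{Z}$ over a quadratic field, and only over $\mathbb{Q}(\sqrt 5)$ and $\mathbb{Q}(\sqrt{-15})$; hence the descent forces one of these to be a subfield of $K$, which is exactly what the hypothesis of (vi) excludes.

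The genuinely delicate cases are the composite and reducible ones, where the naive full-cyclic descent overshoots. For $N=21$ under the sole hypothesis $3\nmid n$ one has $\gcd(n,\phi(21))=\gcd(n,12)\in\{1,2,4\}$, so the generator need only descend to a biquadratic field. Instead I would descend the $3$- and $7$-primary pieces separately, each governed by $\phi\in\{2,6\}$ whose $2$-part is at most $2$, so each piece lives over $\mathbb{Q}$ or a quadratic field. Using the twisting isomorphism of Lemma \ref{veryuseful} I would replace $E$ by a quadratic twist $E_\delta$ with $E_\delta(\mathbb{Q})[7]=\mathbb{Z}/7\mathbb{Z}$; the same lemma then exhibits a $3$-torsion point of $E_\delta$ over a single quadratic field $\mathbb{Q}(\sqrt e)$, producing $\mathbb{Z}/21\mathbb{Z}\subseteq E_\delta(\mathbb{Q}(\sqrt e))$ (or $\subseteq E_\delta(\mathbb{Q})$ when $e$ is a square), which Theorem \ref{najmanquadratic} and Mazur's theorem forbid.

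The two groups $\mathbb{Z}/2\mathbb{Z}\times\mathbb{Z}/2N\mathbb{Z}$ with $N=7,9$ are not cyclic, so here I would combine descent with a count of the $2$-division field. Descending the odd ($7$- or $9$-) part and applying Lemma \ref{veryuseful} yields a curve $E'$, equal to $E$ or to a quadratic twist of it, with $E'(\mathbb{Q})$ containing a point of order $7$ or $9$; by Mazur's theorem $E'(\mathbb{Q})_{\text{tors}}$ is then exactly $\mathbb{Z}/7\mathbb{Z}$ or $\mathbb{Z}/9\mathbb{Z}$, and in particular has no rational $2$-torsion. Since a quadratic twist has the same mod-$2$ representation as $E$, it follows that $E$ itself has no rational $2$-torsion, so its $2$-division polynomial is irreducible and $[\mathbb{Q}(E[2]):\mathbb{Q}]\in\{3,6\}$. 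But $\mathbb{Z}/2\mathbb{Z}\times\mathbb{Z}/2N\mathbb{Z}$ contains the full $2$-torsion, forcing $\mathbb{Q}(E[2])\subseteq K$ and hence $3\mid n$, contrary to hypothesis. The main obstacle throughout is exactly these composite and reducible cases: the divisibility conditions only guarantee descent to a quartic field, and it is the twisting isomorphism of Lemma \ref{veryuseful} (together, for the $\mathbb{Z}/2\mathbb{Z}\times\mathbb{Z}/2N\mathbb{Z}$ cases, with the divisibility of $[\mathbb{Q}(E[2]):\mathbb{Q}]$ by $3$) that recovers a contradiction at the level of $\mathbb{Q}$ or a quadratic field.
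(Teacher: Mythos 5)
Your proposal is correct and covers every case of the lemma, and it runs on the same engine as the paper's proof --- the Galois-character descent underlying Lemma \ref{4pairinggeneralmore}, Mazur's theorem, Theorem \ref{najmanquadratic}, and the twisting isomorphism of Lemma \ref{veryuseful} --- but you streamline several cases in a genuinely useful way. The paper applies the descent only to odd prime-power pieces, and therefore needs extra steps for $N=14,18$ (extracting a rational $2$-torsion point and pushing the $7$- or $9$-part onto a twist over $\Q$) and for $N=15$ (two twists $E_{d_1},E_{d_2}$ recombined via Lemma \ref{veryuseful} over $\Q(\sqrt{d_1d_2})$); you instead note that $\sigma\mapsto a_\sigma$ is a homomorphism $\Gal(K/\Q)\to(\Z/N\Z)^\times$ whose image has order dividing $\gcd(n,\phi(N))$, and that $[\Q(P):\Q]$ equals this image order since $\Gal(K/\Q(P))$ is exactly its kernel; this descends the full cyclic generator in one stroke, lets Najman's theorem dispatch $14$ and $18$ directly, and for $15$ pins the quadratic field down to $\Q(\sqrt{5})$ or $\Q(\sqrt{-15})$ without any twisting. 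One caution on your parenthetical: the paper's argument does \emph{not} apply verbatim to composite $N$, because the step $P^\sigma=\pm P$ uses that $-1$ is the unique involution of the cyclic group $(\Z/p^m\Z)^\times$, which fails for $N=15,21$ (e.g.\ $4\in(\Z/15\Z)^\times$ has order $2$, and $P^\sigma=4P$ does not fix $x(P)$); your image-order/fixed-field formulation is what actually works, and it is the argument you in fact use, together with the needed Galois-stability of $\langle P\rangle=E(K)_{\text{tors}}$, so no gap results. Your $N=21$ treatment coincides with the paper's. For $\Z/2\Z\times\Z/2N\Z$ you run the paper's deduction in reverse: the paper derives a rational root of the $2$-division cubic from $3\nmid n$ and then contradicts Mazur with a point of order $2N$ on a twist over $\Q$, whereas you let Mazur forbid rational $2$-torsion on the twist (hence on $E$, since twisting preserves $\Q(E[2])$ and rationality of $2$-torsion), forcing the cubic irreducible and $3\mid [\Q(E[2]):\Q]\mid n$ --- logically equivalent and equally valid. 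In sum: the paper's version keeps every descent at the prime-power level where its lemma is literally stated, at the cost of case-by-case patching; yours buys uniformity and shorter case analysis at the cost of having to justify (as you correctly do) the extension of the descent to arbitrary cyclic torsion.
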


\begin{proof}
   Let us show that $E(K)_{\text{tors}} \cong \mathbb{Z}/N\mathbb{Z}$ is not possible for the values of $N$ other than $14, 15, 18, 21$ given in the lemma, under the appropriate assumptions on the divisors of $n$. All such values of $N$ are either prime or prime powers, so we can apply Lemma~\ref{4pairinggeneralmore}. Under the assumptions on $n$, we have $\gcd(n, \phi(N)) \leq 2$, and hence $E(K)_{\text{tors}} \cong E_d(\mathbb{Q})_{\text{tors}}$ for some square-free integer $d$ with $\sqrt{d} \in K$. Since $E_d$ is also an elliptic curve over $\mathbb{Q}$, Mazur's theorem implies that $E_d(\mathbb{Q})_{\text{tors}} \cong \mathbb{Z}/N\mathbb{Z}$ is not possible.

    Showing that $E(K)_{\text{tors}} \cong \mathbb{Z}/N\mathbb{Z}$ is not possible for $N = 14$ and $N = 18$ can be done similarly, but it requires an additional step. Assume the contrary. Then $E(K)_{\text{tors}}$ has a single $2$-torsion point, which implies that there is a rational $2$-torsion point in $E(K)_{\text{tors}}$. Consequently, for any square-free integer $d$, the torsion subgroup $E_d(K)_{\text{tors}}$ also has a rational $2$-torsion point. 

If $n$ is not divisible by $3$, then by Lemma~\ref{4pairinggeneralmore}, the torsion subgroup $E_d(\mathbb{Q})_{\text{tors}}$ has a point of order $7$ or $9$, respectively, for some square-free integer $d$ with $\sqrt{d} \in K$. Since $E_d(\mathbb{Q})_{\text{tors}}$ contains both a $2$-torsion point and a point of order $7$ or $9$, it must contain a point of order $14$ or $18$, respectively. This contradicts Mazur's theorem.

   The case $E(K)_{\text{tors}} \cong \mathbb{Z}/21\mathbb{Z}$ requires more careful consideration. By Lemma~\ref{4pairinggeneralmore}, we obtain 
\begin{equation*}
E(K)[3] \cong E_{d_1}(\mathbb{Q})[3] \quad \text{and} \quad E(K)[7] \cong E_{d_2}(\mathbb{Q})[7]    
\end{equation*}
for some square-free integers $d_1$ and $d_2$ satisfying $\sqrt{d_1}, \sqrt{d_2} \in K$. Clearly, $E_{d_1}(\mathbb{Q})[3]$ is a subgroup of $E_{d_1}(\mathbb{Q})[21]$, and similarly $E_{d_2}(\mathbb{Q})[7]$ is a subgroup of $E_{d_2}(\mathbb{Q})[21]$.

If $d_1 = d_2$, then we obtain $E_{d_1}(\mathbb{Q})[21] \cong \mathbb{Z}/21\mathbb{Z}$, which contradicts Mazur's theorem, since $E_{d_1}$ is defined over $\mathbb{Q}$. 

Otherwise, applying Lemma~\ref{veryuseful}, we obtain
\begin{equation*}
E_{d_1}(\mathbb{Q}(\sqrt{d_1 d_2}))[21] \cong E_{d_1}(\mathbb{Q})[21] \oplus E_{d_2}(\mathbb{Q})[21].
\end{equation*}
Note that $E_{d_1}(\mathbb{Q})[3] \cong \mathbb{Z}/3\mathbb{Z}$ and $E_{d_2}(\mathbb{Q})[7] \cong \mathbb{Z}/7\mathbb{Z}$ since $E(K)_{\text{tors}} \cong \mathbb{Z}/21\mathbb{Z}$. Therefore, the group $E_{d_1}(\mathbb{Q}(\sqrt{d_1 d_2}))[21]$ has a subgroup isomorphic to $\mathbb{Z}/21\mathbb{Z}$. In particular, this group is contained in the torsion subgroup $E_{d_1}(\mathbb{Q}(\sqrt{d_1 d_2}))_{\text{tors}}$.

Since $E_{d_1}$ is defined over $\mathbb{Q}$ and we are considering its torsion over a quadratic field, Theorem~\ref{najmanquadratic} implies that it cannot have a subgroup isomorphic to $\mathbb{Z}/21\mathbb{Z}$. Hence, $E(K)_{\text{tors}} \cong \mathbb{Z}/21\mathbb{Z}$ is not possible.

   Similarly, the case $E(K)_{\text{tors}} \cong \mathbb{Z}/15\mathbb{Z}$ also requires special attention. By applying the same arguments, we find that $E_{d_1}(\mathbb{Q}(\sqrt{d_1 d_2}))_{\text{tors}}$ has a subgroup of order $15$ for some square-free integers $d_1, d_2$ such that $\sqrt{d_1}, \sqrt{d_2} \in K$. Since $E_{d_1}$ is defined over $\mathbb{Q}$, Theorem~\ref{najmanquadratic} implies that $d_1d_2 = 5$ or $d_1d_2 = -15$. However, this is impossible because we assumed both $\sqrt{5}$ and $\sqrt{-15}$ do not belong to the field $K$.

   Finally, we show that $E(K)_{\text{tors}} \cong \mathbb{Z}/2\mathbb{Z} \times \mathbb{Z}/2N\mathbb{Z}$ is not possible for $N=7,9$. Suppose otherwise. Then, by Lemma~\ref{4pairinggeneralmore}, we have 
\begin{equation*}
E(K)[N] \cong E_d(\mathbb{Q})[N]
\end{equation*}   
for some square-free integer $d$ with $\sqrt{d} \in K$. Since $E(K)$ has full $2$-torsion, $E_d(K)$ also has full $2$-torsion. If the cubic polynomial $x^3 + A x + B$ is irreducible over $\mathbb{Q}$, then the degree $n = [K:\mathbb{Q}]$ must be divisible by $3$, contradicting our assumption. Therefore, the cubic must have a rational root, implying that both $E$ and $E_d$ have a rational $2$-torsion point. Recall that 
\begin{equation*}
E(K)[N] \cong E_d(\mathbb{Q})[N] \cong \mathbb{Z}/N\mathbb{Z}.
\end{equation*}
Hence, $E_d(\mathbb{Q})$ must contain a subgroup of order $2N$, contradicting Mazur's theorem.
\end{proof}

\begin{lemma}\label{mod16}
      Let $K$ be a finite Galois extension of $\mathbb{Q}$ with $[K:\mathbb{Q}]=n$ for some positive integer $n$. Let $E/\mathbb{Q}$ be an elliptic curve. If the extension degree $n$ is not divisible by $4$, then $E(K)_{\text{tors}}\not \cong \mathbb{Z}/2\mathbb{Z} \times \mathbb{Z}/16\mathbb{Z}$.
\end{lemma}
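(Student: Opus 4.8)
The plan is to exploit that $\Aut(\Z/2\Z\times\Z/16\Z)$ is a $2$-group, so that the hypothesis $4\nmid n$ alone forces the whole torsion subgroup to be defined over a field of degree at most $2$, where Mazur and Najman apply. Assume for contradiction that $E(K)_{\text{tors}}\cong\Z/2\Z\times\Z/16\Z$. Because $E$ is defined over $\Q$ and $K/\Q$ is Galois, each $\sigma\in G:=\Gal(K/\Q)$ maps $E(K)_{\text{tors}}$ to itself by a group automorphism, giving a homomorphism $\rho\colon G\to\Aut(\Z/2\Z\times\Z/16\Z)$.

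First I would compute the order of $\Aut(\Z/2\Z\times\Z/16\Z)$. Writing the group as $\langle x\rangle\oplus\langle y\rangle$ with $x$ of order $2$ and $y$ of order $16$, an automorphism sends $y$ to one of the $16$ elements of order $16$, and then sends $x$ to an order-$2$ element not contained in the cyclic group $\langle\phi(y)\rangle$; since $\langle\phi(y)\rangle$ contains a unique point of order $2$, there are exactly $2$ admissible images of $x$. Hence $|\Aut(\Z/2\Z\times\Z/16\Z)|=16\cdot 2=2^{5}$, a $2$-group. Consequently $\rho(G)$ is a $2$-group, so $|\rho(G)|=[G:\ker\rho]$ is a power of $2$ dividing $n=[K:\Q]$. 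As $4\nmid n$, the $2$-adic valuation of $n$ is at most $1$, whence $|\rho(G)|\le 2$.

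Next I would descend. Let $F=K^{\ker\rho}$; then $[F:\Q]=|\rho(G)|\le 2$, and every point of $E(K)_{\text{tors}}$ is fixed by $\Gal(K/F)=\ker\rho$, so $E(K)_{\text{tors}}\subseteq E(F)$ by Galois descent. Thus $E(F)$ contains a subgroup isomorphic to $\Z/2\Z\times\Z/16\Z$ with $F=\Q$ or $F$ quadratic. This is impossible: by Mazur's theorem \cite{Mazur} no such group occurs in $E(\Q)_{\text{tors}}$, and by Theorem \ref{najmanquadratic} the largest group of the form $\Z/2\Z\times\Z/2N\Z$ realized over a quadratic field is $\Z/2\Z\times\Z/12\Z$. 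This contradiction gives the claim.

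The only real content is the order computation showing $\Aut(\Z/2\Z\times\Z/16\Z)$ is a $2$-group; this plays, for the group $\Z/2\Z\times\Z/16\Z$, the role that $\gcd(n,\phi(p^m))$ plays in Lemma \ref{4pairinggeneralmore}, with the simplification that here the automorphism group has no odd part, so only the $2$-part of $n$ is relevant. I expect no serious obstacle beyond this bookkeeping; in particular the argument uses only $4\nmid n$, and the hypothesis $3\nmid n$ (which in the companion Lemma \ref{mainlemma} serves to force a rational $2$-torsion point) is not needed here. An alternative route, parallel to Lemma \ref{mainlemma}, would first invoke $3\nmid n$ to produce a rational $2$-torsion point and then push the order-$16$ point to a quadratic field via a twist, but the automorphism-group argument is shorter and sidesteps twisting at the prime $2$.
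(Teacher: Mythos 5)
Your proof is correct, and it takes a genuinely different route from the paper's. The paper argues pointwise: it counts the eight $x$-coordinates of the sixteen points of order $16$, uses the divisibility hypotheses on $n$ to force some point $P$ of order $16$ to satisfy $[\Q(P):\Q]\leq 2$, separately invokes $3\nmid n$ to force the $2$-division cubic $x^3+Ax+B$ to have a rational root so that $[\Q(E[2]):\Q]\leq 2$, and only then descends the whole group (generated by $E[2]$ and $P$) to a quadratic field, contradicting Theorem \ref{najmanquadratic}. You instead descend everything in one stroke through $\rho\colon \Gal(K/\Q)\to \Aut(\Z/2\Z\times \Z/16\Z)$, and your count $|\Aut(\Z/2\Z\times \Z/16\Z)|=2^5$ is correct (it agrees with the Hillar--Rhea formula; note this $2$-group property is special to $\Z/2^a\Z\times\Z/2^b\Z$ with $a<b$, since $\Aut(\Z/2\Z\times\Z/2\Z)\cong S_3$ is not a $2$-group, so the computation is genuinely load-bearing). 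From $4\nmid n$ you get $|\rho(G)|\leq 2$, and the fixed field of $\ker\rho$ has degree at most $2$ and contains all of $E(K)_{\text{tors}}$, so Mazur and Theorem \ref{najmanquadratic} finish the argument. A pleasant byproduct is that your lemma is strictly stronger: the hypothesis $3\nmid n$ is indeed superfluous, and your setup makes the structural reason visible --- in $\Z/2\Z\times\Z/16\Z$ the unique point of order $2$ divisible by $8$ is fixed by every automorphism, so Galois can never act on the three $2$-torsion points with order $3$; the conclusion the paper extracts from $3\nmid n$ is automatic once the full group is assumed. What each approach buys: yours is shorter, needs a weaker hypothesis, and extends the $\gcd$ philosophy of Lemma \ref{4pairinggeneralmore} from cyclic $p$-power parts to the full automorphism group; the paper's orbit-counting is more elementary and hands-on, requires no knowledge of the abstract automorphism group, and the same $x$-coordinate technique is reused later (for $\Z/16\Z$ over $\Q(\zeta_p)$ in Section \ref{realizationsection}), though your method would apply there as well since $\Aut(\Z/16\Z)$ is also a $2$-group.
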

\begin{proof}
Assume, for the sake of contradiction, that there exists a point 
\begin{equation*}
P = (x_0, y_0) \in E(K)_{\text{tors}}
\end{equation*}
of order $16$. For any $\sigma \in \Gal(K/\mathbb{Q})$, the point $P^\sigma\in E(K)_{\text{tors}}$ also has order $16$. Since there are exactly $16$ points of order $16$ in $E(K)_{\text{tors}}$, their $x$-coordinates form a set of $8$ elements, assuming $E$ is given in short Weierstrass form.

Consider the orbit of $x_0$ under the Galois action:
\begin{equation*}
\{ x_0^\sigma : \sigma \in \Gal(K/\mathbb{Q}) \}.
\end{equation*}
This set corresponds to the Galois conjugates of $x_0$, and its size equals the degree of the field extension $\mathbb{Q}(x_0)/\mathbb{Q}$. Since $\mathbb{Q}(x_0) \subseteq K$ and $[K : \mathbb{Q}] = n$, the degree $[\mathbb{Q}(x_0) : \mathbb{Q}]$ divides $n$. By assumption, $n$ is not divisible by $4$, so the size of the orbit cannot be $4$ or $8$.

Consider the partitions of $8$ elements into subsets. Either there exists a subset with at most $2$ elements, or the partition consists of two subsets of sizes $3$ and $5$. We will show at the end of the proof that the latter case is not possible.

Without loss of generality, assume that the field extension $\mathbb{Q}(x_0)$ has degree at most $2$, where $P = (x_0, y_0)$ is a point of order $16$. If the orbit size is at most $2$, then
$\mathbb{Q}(x_0) \subseteq \mathbb{Q}(\sqrt{d})$
for some square-free integer $d$. 
   Since 
\begin{equation*}
[\mathbb{Q}(x_0,y_0) : \mathbb{Q}(x_0)] \leq 2,
\end{equation*}
the degree of the field extension $\mathbb{Q}(P) = \mathbb{Q}(x_0, y_0)$ divides $4$. Moreover, since the degree also divides $n$, we conclude that $[\mathbb{Q}(P) : \mathbb{Q}] \leq 2$.

Now consider the extension $\mathbb{Q}(E[2])$, obtained by adjoining the coordinates of all $2$-torsion points of $E$. Assuming $E$ is given in short Weierstrass form, $\mathbb{Q}(E[2])$ is the splitting field of the cubic polynomial
\begin{equation*}
x^3 + A x + B,
\end{equation*}
with $A,B \in \mathbb{Q}$. The point $8P$ is a $2$-torsion point and lies in the field $\mathbb{Q}(P)$. Therefore, one root of the cubic must lie in $\mathbb{Q}(P)$, which is at most quadratic over $\mathbb{Q}$. This implies that the cubic is not irreducible over $\mathbb{Q}$, and hence its splitting field, which is $\mathbb{Q}(E[2])$, has degree at most $2$ over $\mathbb{Q}$.

We have thus shown that there exists a point $P \in E(K)_{\text{tors}}$ of order $16$ such that $\mathbb{Q}(P)$ has degree at most $2$, and that $\mathbb{Q}(E[2])$ also has degree at most $2$. Therefore, the compositum $\mathbb{Q}(E[2], P)$ is an extension of degree dividing $4$, and since it also divides $n$, its degree is at most $2$. This proves that all $2$-torsion points and the point $P$ lie in a quadratic subfield $\mathbb{Q}(\sqrt{d})$ of $K$, for some square-free integer $d$ (possibly $d = 1$).

Finally, since all points in $E(K)_{\text{tors}}$ are generated by the $2$-torsion points and a point of order $16$, we conclude that
\begin{equation*}
E(K)_{\text{tors}} = E(\mathbb{Q}(\sqrt{d}))_{\text{tors}},
\end{equation*}
which contradicts Theorem \ref{najmanquadratic}.

To conclude our proof, we will show that the set of $x$-coordinates of the $16$-torsion points cannot be partitioned into two Galois orbits of sizes $3$ and $5$ under the action of $\Gal(K/\mathbb{Q})$.

To obtain a contradiction, assume that $P, Q \in E(K)_{\text{tors}}$ are points of order $16$ such that the Galois orbits $\{x_0^\sigma : \sigma \in \Gal(K/\mathbb{Q})\}$ and $\{x_1^\sigma : \sigma \in \Gal(K/\mathbb{Q})\}$ contain exactly $3$ and $5$ elements, respectively, where $P = (x_0, y_0)$ and $Q = (x_1, y_1)$.

This means that $\mathbb{Q}(x_0)$ is a cubic field and that $\mathbb{Q}(P)$ has extension degree $3$ or $6$. The field $\mathbb{Q}(E[2])$ is the splitting field of the cubic $x^3 + Ax + B$, so its degree over $\mathbb{Q}$ is $1$, $2$, $3$, or $6$. Moreover, since all of $E(K)_{\text{tors}}$ is generated by $P$ and the $2$-torsion points, we have 
\begin{equation*}
 E(K)_{\text{tors}} = E(L)_{\text{tors}}
\end{equation*}
where we define $L = \mathbb{Q}(E[2], P) \subset K$.

Since $n$ is not divisible by $4$, the degree $[\mathbb{Q}(L):\mathbb{Q}]$ is also not divisible by $4$. Moreover, it must divide $36$. Hence, we must have
\begin{equation*}
    [\Q(L):\Q]\in\{1,2,3,6,9\}.
\end{equation*}
However, since $Q \in E(L)_{\text{tors}}$, it follows that $\mathbb{Q}(x_1) \subseteq \mathbb{Q}(Q) \subseteq L$, which contradicts the assumption that $[\mathbb{Q}(x_1):\mathbb{Q}] = 5$.
\end{proof}

Now, we will proceed to the proof of our first theorem.

\begin{proof}[Proof of Theorem \ref{maintheorem}]
Since $\mathbb{Q}(\zeta_p)$ is an abelian number field of degree $p-1$, we know that the torsion subgroup
$E(\mathbb{Q}(\zeta_p))_{\text{tors}}$
is isomorphic to one of the groups given in Theorem \ref{allgroupslist}. Recall that the groups listed there are all possible subgroups of the groups that can be realized as 
$E(\mathbb{Q}^{ab})_{\text{tors}}$.
In our proof, we will show that most of those groups are not realizable.

Firstly, let us prove that 
$\mathbb{Z}/N\mathbb{Z} \times \mathbb{Z}/N\mathbb{Z}$
cannot be a subgroup of 
$E(\mathbb{Q}(\zeta_p))_{\text{tors}}$
when $N=3,4,5,6,8$. Due to the Weil pairing, if 
$\mathbb{Z}/N\mathbb{Z} \times \mathbb{Z}/N\mathbb{Z}$
is a subgroup of 
$E(\mathbb{Q}(\zeta_p))_{\text{tors}}$,
then 
\begin{equation*}
\zeta_N \in \mathbb{Q}(\zeta_p).
\end{equation*}
This is not the case for $N=3,4,5,6,8,$ since $p \geq 11$ is a prime.

As a combined result of Lemma \ref{4pairinggeneralmore}, by assuming that $p-1$ is not divisible by $3,4,5,7,11,$ we can use all items of the lemma except the result about $\mathbb{Z}/15\mathbb{Z}$. This result requires more conditions. We know that when $p$ is a prime, $\mathbb{Q}(\zeta_p)$ has a unique quadratic subfield. If 
$p \equiv 1 \pmod{4}$,
that unique quadratic subfield is 
$\mathbb{Q}(\sqrt{p})$.
If 
$p \equiv 3 \pmod{4}$,
as in our theorem, the unique quadratic subfield is 
$\mathbb{Q}(\sqrt{-p})$.
This means that $\mathbb{Q}(\sqrt{5})$ and $\mathbb{Q}(\sqrt{-15})$ are not subfields of $\mathbb{Q}(\zeta_p)$, so we can use that result of the lemma too.

Then, we have shown that 
\begin{equation*}
E(\mathbb{Q}(\zeta_p))_{\text{tors}} \not\cong \mathbb{Z}/N\mathbb{Z} \quad \text{for} \quad 
N = 11,13,15,17,19,21,25,27,37,43,67,163,
\end{equation*}
under the additional assumptions that $p-1$ is not divisible by $7$ or $11$. Next, we demonstrate how these additional assumptions can be removed.

 If we remove the conditions that $p-1$ is not divisible by $7$ and $11$, then we cannot exclude the possibility of 
$\mathbb{Z}/43\mathbb{Z}$ or $\mathbb{Z}/67\mathbb{Z}$
using Lemma \ref{4pairinggeneralmore} alone. Thus, we need a different strategy. Let 
\begin{equation*}
E(\mathbb{Q}(\zeta_p))_{\text{tors}} \cong \mathbb{Z}/\ell\mathbb{Z} \quad \text{ for } \ell = 43 \text{ or } \ell = 67.
\end{equation*}
By Lemma \ref{43isogeny}, this is only possible when 
$p = 43$ and $p = 67$,
respectively, but both cases are excluded by our assumption that $p-1$ is not divisible by $3$.

There is also another way to see that 
$\mathbb{Z}/43\mathbb{Z}$ or $\mathbb{Z}/67\mathbb{Z}$
cannot be realized over $\mathbb{Q}(\zeta_p)$ under the assumption that $p-1$ is not divisible by $3$. We use a similar idea to the proof of Lemma \ref{mainlemma}. Let 
$P \in E(\mathbb{Q}(\zeta_p))_{\text{tors}}$
be a point of order $\ell$ for $\ell=43$ or $\ell=67$. For any 
$\sigma \in \Gal(\mathbb{Q}(\zeta_p)/\mathbb{Q})$,
consider the Galois action. Then 
$\sigma^{(\ell-1)/3}$
fixes any $\ell$-order point in $E(\mathbb{Q}(\zeta_p))_{\text{tors}}$ because 
\begin{equation*}
\gcd(\ell-1, p-1) \mid \frac{\ell-1}{3}.
\end{equation*}

Combined with the fact that 
$\Gal(\mathbb{Q}(\zeta_p)/\mathbb{Q}) \cong \mathbb{Z}/(p-1)\mathbb{Z}$,
this implies that the field of definition of the $\ell$-torsion, 
$\mathbb{Q}(P) \subset \mathbb{Q}(\zeta_p)$,
is an extension of degree at most 
$(\ell-1)/{3}$.
This contradicts Theorem \ref{FieldofDefinition} because the extension degree is not sufficiently large.

By combining Lemma \ref{mainlemma} and Lemma \ref{mod16}, we can show that
\begin{equation*}
E(\mathbb{Q}(\zeta_p))_{\text{tors}} \not\cong \mathbb{Z}/2\mathbb{Z} \times \mathbb{Z}/2N\mathbb{Z}\quad \text{for}\quad
N = 7,8,9.
\end{equation*}
Only remaining possibilities for 
$E(\mathbb{Q}(\zeta_p))_{\text{tors}}$
are the groups in Mazur's theorem and the three remaining groups given in the statement of Theorem \ref{maintheorem}.
\end{proof}

\section{Isogenies}\label{isogenysection}
In this section, we study the relationship between the rational isogenies of an elliptic curve 
$E/\mathbb{Q}$ and its torsion subgroups $E(K)_{\text{tors}}$, where $K$ is a Galois number field. Throughout the rest of the paper, when we say that $E$ has an $n$-isogeny, we mean that there exists a $\mathbb{Q}$-rational isogeny 
$\varphi: E \to E'$
with its kernel isomorphic to $\mathbb{Z}/n\mathbb{Z}$.

The classification of $\mathbb{Q}$-rational isogenies has been carried out by several authors by studying the rational points on the modular curves 
$X_0(n)$.
When the genus of the modular curve $X_0(n)$ is greater than zero, it is well-known that $X_0(n)$ has only finitely many rational points. Some of those points are cusps, and the non-cuspidal points correspond to the $j$-invariants of elliptic curves with an $n$-isogeny.

The explicit list of $j$-invariants can be found in Table 2 of Section 7 in \cite{Chou} and Table 4 of Section 9 in \cite{fieldofdefinition}. For the elliptic curves corresponding to these $j$-invariants, we will use their LMFDB labels. The information on these curves, including torsion subgroups and isogeny structure, is available on the LMFDB database \cite{lmfdb}.

\begin{theorem}[Fricke, Kenku, Klein, Kubert, Ligozat, Mazur, and Ogg, among others] 
If $E/\Q$ has an $n$-isogeny, $n \leq 19 $ or $n \in \{21, 25, 27, 37, 43, 67, 163\}$. If E does not have complex multiplication, then $n \leq 18$ or $n \in \{21, 25, 37\}$.
\end{theorem}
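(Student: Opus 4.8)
The plan is to translate the statement into a question about rational points on the classical modular curves $X_0(n)$: a $\Q$-rational $n$-isogeny from $E$ is the same datum as a $\Q$-rational cyclic subgroup $C \subset E$ of order $n$, and such pairs $(E,C)$ are classified by the non-cuspidal $\Q$-points of $X_0(n)$. Having made this reduction, the first structural observation is that an $n$-isogeny forces an $\ell^k$-isogeny for every prime power $\ell^k \mid\mid n$, so it suffices first to determine which prime powers occur and then to assemble the admissible composite values. The whole theorem thus becomes the determination of the set of $n$ for which $X_0(n)$ has a non-cuspidal rational point, together with a bookkeeping of which of these points lie on curves with complex multiplication.

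First I would dispose of the genus-zero range. For $n \in \{1,\dots,10,12,13,16,18,25\}$ the curve $X_0(n)$ has genus $0$ and a rational cusp, hence is isomorphic to $\mathbb{P}^1_{\Q}$ and carries infinitely many rational points; these account for the small $n$ and show they are realized infinitely often, including by non-CM curves. This already pins down most of the list and establishes that the bound is sharp in this range.

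Next come the genus-one curves, which for us are $X_0(n)$ with $n \in \{11,14,15,17,19,21,27\}$. Here I would write each $X_0(n)$ as an explicit elliptic curve over $\Q$ and compute its Mordell--Weil group; in each relevant case the rank is $0$, so the rational points are exactly the torsion and can be listed. One then checks by hand which of these finitely many points are cusps and which are genuine isogeny points, and for the genuine ones records the $j$-invariant. This is how one sees, for instance, that the $27$-isogeny points correspond only to the complex multiplication $j$-invariant with $\operatorname{End} \otimes \Q = \Q(\sqrt{-3})$, so $27$ is a CM-only value, whereas $11,14,15,17,19,21$ are genuinely realized (and include non-CM curves).

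The heart of the matter, and the main obstacle, is the genus $\geq 2$ range, above all the prime-degree case $X_0(\ell)$ with $\ell \in \{37,43,67,163\}$. For prime $\ell$ the decisive input is Mazur's theorem on rational isogenies of prime degree, proved via the Eisenstein ideal and the analysis of the winding quotient of $J_0(\ell)$; this forces $\ell \leq 19$ or $\ell \in \{37,43,67,163\}$, and it is the deep ingredient that I would invoke rather than reprove. The finitely many rational points predicted by Faltings on these higher-genus curves are then determined explicitly, and one finds that $43,67,163$ occur solely on the CM curves attached to the imaginary quadratic orders of discriminant $-43,-67,-163$. To finish and to extract the non-CM refinement, I would run through the thirteen rational CM $j$-invariants (the class-number-one orders) and tabulate exactly which extra isogeny degrees their endomorphisms supply; deleting these shows that for a curve without complex multiplication the only surviving possibilities are $n \leq 18$ together with $n \in \{21,25,37\}$.
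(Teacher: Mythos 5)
First, a point of comparison that matters for this review: the paper does not prove this statement at all. It is quoted as a classical theorem with its traditional attribution (Fricke, Kenku, Klein, Kubert, Ligozat, Mazur, Ogg), and the paper only uses the downstream data --- the finite lists of $j$-invariants in Table 2 of \cite{Chou} and Table 4 of \cite{fieldofdefinition}. So your sketch must be measured against the literature proof it would reproduce, and at that level your architecture is the standard and correct one: reduce to non-cuspidal rational points on $X_0(n)$; genus-zero levels $n\in\{1,\dots,10,12,13,16,18,25\}$ give $\mathbb{P}^1$'s with infinitely many points; rank-zero genus-one levels are handled by listing torsion; Mazur's prime-degree theorem is the deep input; and the CM refinement comes from tabulating the thirteen rational CM $j$-invariants.

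There are, however, two genuine defects. The larger one is your step ``determine which prime powers occur and then assemble the admissible composite values,'' which hides most of the remaining work. An $n$-isogeny does force an $\ell^k$-isogeny for each $\ell^k \| n$, but the converse direction is not bookkeeping: two coprime cyclic rational subgroups combine to a cyclic one only when they sit on the \emph{same} curve, and which prime powers co-occur on a single curve is exactly the question of rational points on the composite-level curves $X_0(22), X_0(26), X_0(33), X_0(34), X_0(35), X_0(38), X_0(39), X_0(51), X_0(55), X_0(65)$, etc. --- none of which is determined by the prime-power lists. For instance, $2$-isogenies are ubiquitous and $11$-isogenies exist for three $j$-invariants, yet no curve has a $22$-isogeny; that is the separate statement that $X_0(22)(\Q)$ consists of cusps. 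This case-by-case analysis of a couple dozen genus $\geq 2$ curves (together with the higher prime powers $X_0(121)$, $X_0(125)$, $X_0(169)$) is precisely Kenku's contribution --- which is why his name appears in the attribution --- and a proof that invokes only Mazur plus genus-$0$/$1$ computations is incomplete.

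The smaller defect is factual. Your parenthetical that $11,14,15,17,19,21$ are realized ``and include non-CM curves'' is false for $14$ and $19$: both $j$-invariants admitting a $14$-isogeny ($-3375$ and $16581375$) have CM by the orders of discriminant $-7$ and $-28$, and the unique $j$-invariant with a $19$-isogeny ($-884736$) has CM by discriminant $-19$ --- which is exactly why $19$ drops out of the non-CM clause. Your final CM-tabulation step would catch this (and since the non-CM clause is a bound rather than a sharp list, $14\leq 18$ creates no contradiction with the statement), but as written the middle of your argument contradicts its end. Relatedly, presenting the genus-one levels as ``$\{11,14,15,17,19,21,27\}$'' is circular: the genus-one list is $\{11,14,15,17,19,20,21,24,27,32,36,49\}$, and showing that $X_0(20),X_0(24),X_0(32),X_0(36),X_0(49)$ have only cuspidal rational points is part of the proof, not a given. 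Finally, invoking Faltings for $X_0(37),X_0(43),X_0(67),X_0(163)$ is unnecessary --- the rational points there were determined directly (e.g.\ Mazur--Swinnerton-Dyer for level $37$) --- though this is harmless.
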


For $n \in \{11, 15, 17, 19, 21, 27, 37, 43, 67, 163\}$, the modular curve $X_0(n)$ has finitely many rational points. Consequently, there are only finitely many elliptic curves, or more precisely, finitely many $j$-invariants, with an $n$-isogeny. The corresponding list of $j$-invariants can be found in Table 2 of Section 7 in \cite{Chou} and Table 4 of Section 9 in \cite{fieldofdefinition}.

\begin{theorem}[Kenku, Theorem 2, \cite{kenku}]\label{Kenku}
There are at most eight $\Q$-isomorphism classes of elliptic curves in each $\Q$ isogeny class.

Let $C_{p}(E)$ denote the number of distinct $\Q$-rational cyclic subgroups of order $p^{n}$ for any $n$ of $E$. Then, we have the following table for bounds on $C_{p}$ for any elliptic curve over $\Q$
$$
\begin{array}{|c|ccccccccccccc|}
\hline p & 2 & 3 & 5 & 7 & 11 & 13 & 17 & 19 & 37 & 43 & 67 & 163 & \text{else} \\ \hline
 C_{p} & 8 & 4 & 3 & 2 & 2 & 2 & 2 & 2 & 2 & 2 & 2 & 2 & 1 \\ \hline
\end{array}
$$

In particular, fix a $\Q$-isogeny class and a representative $E$ of that class.  

\begin{itemize}

\item If $C_{p}(E) = 2$ for some prime $p \geq 11$, then $C_{q}(E)=1$ for all other primes.  So $C(E)=2$.

\item If $C_7(E)=2$, then $C_5(E)=1$ and either $C_3(E) \leq 2$ and $C_2(E) = 1$ or $C_3(E)=1$ and $C_2(E) \leq 2$.  All these yield $C(E) \leq 4$.

\item If $C_5(E)=3$, then $C_p(E)=1$ for all primes $p \neq 5$.

\item If $C_5(E)=2$, then either $C_3(E) \leq 2$ and $C_2(E)=1$ or $C_3(E)=1$ and $C_2(E) \leq 2$.  Hence $C(E) \leq 4$.

\item If $C_3(E)=4$, then there exists a representative of the class of $E$ with a $\Q$-rational cyclic subgroup of order $27$, and $C_2(E)=1$ so $C(E) \leq 4$.

\item If $C_3(E)=3$, then $C_2(E) \leq 2$ so that $C(E) \leq 6$.

\item If $C_3(E) \leq 2$, then $C_2(E) \leq 4$ so that $C(E) \leq 8$.

\end{itemize}

Note the fact that $C(E)=8$ is possible only if $C_2(E)=8$ or $C_3(E)=2$ and $C_2(E)=4$.

\end{theorem}

\begin{lemma}[Chou, Lemma 2.7, \cite{Chou}]\label{n-isogeny}
    Let $K$ be a Galois extension of $\Q$, and let $E$ be an elliptic curve over $\Q$. If $E(K)_{\text{tors}}\cong \Z/m\Z \times \Z/mn\Z$, then $E$ has an $n$-isogeny over $\Q$.
\end{lemma}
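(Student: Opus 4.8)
The plan is to extract from $E(K)_{\text{tors}}$ a distinguished cyclic subgroup of order $n$ that is forced to be defined over $\Q$, and then invoke the standard correspondence between $\Q$-rational cyclic subgroups and $\Q$-rational isogenies. Writing $E(K)_{\text{tors}} \cong \Z/m\Z \times \Z/mn\Z$, the natural candidate is the image of multiplication-by-$m$, namely $C := mE(K)_{\text{tors}}$; everything reduces to showing that $C$ is cyclic of order $n$ and that it is stable under the absolute Galois group $G_\Q = \Gal(\overline{\Q}/\Q)$.

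First I would pin down the structure of $C$. Choosing generators $P,Q$ with $P$ of order $m$ and $Q$ of order $mn$, multiplication by $m$ kills $P$ and sends $Q$ to a point $mQ$ of order $mn/\gcd(m,mn)=n$, so $C = mE(K)_{\text{tors}} = \langle mQ\rangle \cong \Z/n\Z$. The point of defining $C$ as $mE(K)_{\text{tors}}$ rather than picking it by hand is that it is described purely in terms of the group $E(K)_{\text{tors}}$ together with the multiplication-by-$m$ endomorphism, so it is automatically preserved by every group automorphism of $E(K)_{\text{tors}}$ that commutes with $[m]$.

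Next I would verify Galois stability. Since $K/\Q$ is Galois, the set $E(K)$, and hence its torsion subgroup, is stable under $\Gal(K/\Q)$, which acts by group automorphisms. The map $[m]\colon E\to E$ is defined over $\Q$, so it commutes with the Galois action: for $\sigma\in\Gal(K/\Q)$ and $R\in E(K)_{\text{tors}}$ one has $\sigma(mR)=m\,\sigma(R)$. Consequently $\sigma(C)=\sigma(mE(K)_{\text{tors}})=m\,\sigma(E(K)_{\text{tors}})=mE(K)_{\text{tors}}=C$, so $C$ is $\Gal(K/\Q)$-stable. Because the points of $C$ lie in $E(K)$, the action of any $\tau\in G_\Q$ on $C$ agrees with that of its restriction $\tau|_K\in\Gal(K/\Q)$; hence $C$ is stable under all of $G_\Q$.

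Finally, a $G_\Q$-stable finite subgroup $C\subset E(\overline{\Q})$ with $C\cong\Z/n\Z$ yields a $\Q$-rational quotient isogeny $E\to E/C$ whose kernel is $C$, i.e.\ an $n$-isogeny in the sense fixed in this section; this is the standard descent of a Galois-stable subgroup to $\Q$. The argument is short, and I expect no serious obstacle: the only points requiring care are the passage from $\Gal(K/\Q)$-stability to $G_\Q$-stability, which works precisely because $C\subset E(K)$ and $K/\Q$ is Galois, and the invocation of the subgroup-to-isogeny correspondence, which furnishes a kernel with the required cyclic structure of order exactly $n$.
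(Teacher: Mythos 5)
Your proof is correct, and since the paper imports this lemma from Chou without reproving it, the right comparison is with Chou's original argument, which yours essentially reproduces: one isolates the canonical cyclic subgroup $mE(K)_{\text{tors}}\cong\Z/n\Z$, notes it is Galois-stable because $[m]$ is defined over $\Q$ and $K/\Q$ is Galois (so $G_\Q$ acts through $\Gal(K/\Q)$ on points of $E(K)$), and then applies the standard descent of a $G_\Q$-stable finite subgroup to a $\Q$-rational isogeny $E\to E/C$ with kernel $\cong\Z/n\Z$, exactly matching the paper's definition of an $n$-isogeny. Your choice of $C=mE(K)_{\text{tors}}$ rather than an ad hoc generator is a slightly cleaner way to get Galois stability for free, but the route is the same.
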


\begin{lemma}\label{isogeny-fieldofdefinition}
If $E/\Q$ has a rational $n$-isogeny defined over $\Q$, then, there is a Galois extension $K/\Q$ such that $E(K)_{\text{tors}}$ has a point of order $n$ and $\Gal(K/\Q)$ is isomorphic to a subgroup of the multiplicative group $(\Z/n\Z)^\times$.
\end{lemma}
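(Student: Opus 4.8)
The plan is to take $K$ to be the field of definition of the cyclic subgroup cut out by the isogeny, and then to read off its Galois group from the Galois action on that subgroup. By hypothesis the $n$-isogeny $\phi\colon E\to E'$ is defined over $\Q$, so its kernel $C=\ker\phi$ is a cyclic subgroup of $E[n]$ of order $n$ that is stable under the absolute Galois group $G_\Q=\Gal(\overline{\Q}/\Q)$. Fix a generator $P$ of $C$, so that $C=\langle P\rangle\cong\Z/n\Z$ and $P$ has order exactly $n$. Since $C$ is $G_\Q$-stable, for each $\sigma\in G_\Q$ we have $\sigma(P)\in C$, and because $\sigma$ is a group automorphism it preserves the order of $P$ and hence sends $P$ to another generator of $C$; thus $\sigma(P)=\chi(\sigma)\,P$ for a unique $\chi(\sigma)\in(\Z/n\Z)^\times$. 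The assignment $\sigma\mapsto\chi(\sigma)$ is a homomorphism $\chi\colon G_\Q\to(\Z/n\Z)^\times$, because scalar multiples commute with field automorphisms and so $(\sigma\tau)(P)=\sigma(\chi(\tau)P)=\chi(\tau)\chi(\sigma)P$.

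I would then set $K=\Q(P)$, the field generated over $\Q$ by the coordinates of $P$. First I would check that $K/\Q$ is Galois. For any $\sigma\in G_\Q$ we have $\sigma(K)=\Q(\sigma(P))=\Q(\chi(\sigma)P)$. Now the coordinates of $\chi(\sigma)P$ are $\Q$-rational functions of the coordinates of $P$ (the multiplication-by-$m$ maps on a Weierstrass model have coefficients in $\Q$), so $\Q(\chi(\sigma)P)\subseteq\Q(P)$; and since $\chi(\sigma)$ is a unit modulo $n$ we may recover $P=\chi(\sigma)^{-1}(\chi(\sigma)P)$, giving the reverse inclusion. Hence $\sigma(K)=K$ for all $\sigma$, so $K$ is $G_\Q$-stable and therefore Galois over $\Q$. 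By construction $P\in E(K)$ is a point of order $n$, which already establishes the first assertion that $E(K)_{\text{tors}}$ contains a point of order $n$.

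For the second assertion I would identify $\Gal(K/\Q)$ with the image of $\chi$. An element $\sigma\in G_\Q$ fixes $K$ pointwise exactly when it fixes the coordinates of $P$, that is, when $\sigma(P)=P$, which is precisely the condition $\chi(\sigma)=1$. Thus $\ker\chi=\Gal(\overline{\Q}/K)$, and the first isomorphism theorem gives $\Gal(K/\Q)=G_\Q/\ker\chi\cong\operatorname{im}(\chi)$, a subgroup of $(\Z/n\Z)^\times$. This is exactly the claimed structure, so the proof concludes.

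The only place any care is needed—really the load-bearing step—is the claim that $\Q(\chi(\sigma)P)=\Q(P)$, i.e. that replacing $P$ by a unit multiple does not change its field of definition. This single fact does double duty: it forces $K/\Q$ to be Galois, and it pins down $\ker\chi$ as exactly the pointwise stabilizer of $P$, so that the Galois group is identified precisely with $\operatorname{im}(\chi)$ rather than merely a quotient. Everything else is formal once one recognizes that a rational $n$-isogeny is the same datum as a $G_\Q$-stable cyclic subgroup of order $n$, on which $G_\Q$ can act only through $\Aut(\Z/n\Z)\cong(\Z/n\Z)^\times$.
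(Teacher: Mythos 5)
Your proof is correct, and it is essentially the standard argument: the paper does not prove this lemma itself (it is quoted from Chou \cite{Chou}), and Chou's proof proceeds the same way, via the character $\chi\colon G_{\Q}\to\Aut(C)\cong(\Z/n\Z)^\times$ attached to the Galois-stable kernel $C$ of the isogeny. The only cosmetic difference is that Chou takes $K$ to be the fixed field of $\ker\chi$ directly, whereas you take $K=\Q(P)$ and verify it coincides with that fixed field; your verification via $P=\chi(\sigma)^{-1}\bigl(\chi(\sigma)P\bigr)$, using that multiplication-by-$m$ is defined over $\Q$ and $\chi(\sigma)$ is a unit modulo $n$, is exactly the point needed and is handled correctly.
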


\begin{proof}
  Let $P = (x_0,y_0) \in E(\overline{\mathbb{Q}})_{\text{tors}}$ be a point of order $n$ such that $\langle P \rangle$ is stable under the action of $\Gal(\overline{\mathbb{Q}}/\mathbb{Q})$. Then for any $\sigma \in \Gal(\overline{\mathbb{Q}}/\mathbb{Q})$, we have
\begin{equation*}
    P^\sigma = aP \quad \text{for some} \quad a \in (\mathbb{Z}/n\mathbb{Z})^\times.
\end{equation*}

Put $K = \mathbb{Q}(P) = \mathbb{Q}(x_0,y_0)$. Then it is clear that $K/\mathbb{Q}$ is a Galois extension because $P^\sigma = aP \in \mathbb{Q}(P)$ for all $\sigma$. Moreover, there is an injective homomorphism
\begin{equation*}
    \rho: \Gal(K/\mathbb{Q}) \to (\mathbb{Z}/n\mathbb{Z})^\times.
\end{equation*}

This shows that $\Gal(K/\mathbb{Q})$ is isomorphic to a subgroup of $(\mathbb{Z}/n\mathbb{Z})^\times$.
\end{proof}

\begin{theorem}[Diamond, Shurman, Theorem 9.4.1, \cite{DiamondShurman}]
    Let $\ell$ be prime and let $E$ be an elliptic curve over $\Q$ with conductor $N_E$. The Galois representation $\rho_{E,\ell}$ is unramified at every prime $p\nmid \ell N_E.$ 

\end{theorem}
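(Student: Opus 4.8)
The statement is one direction of the N\'eron--Ogg--Shafarevich criterion, and the plan is to verify it by a purely local analysis at $p$. First I would fix a prime $p\nmid \ell N_E$ together with an embedding $\overline{\Q}\hookrightarrow\overline{\Q}_p$, which realizes a decomposition group at $p$ as $\Gal(\overline{\Q}_p/\Q_p)$ and singles out its inertia subgroup $I_p$. Since $\rho_{E,\ell}$ being unramified at $p$ means exactly that $I_p$ acts trivially on the Tate module $T_\ell E$ --- equivalently, on every $E[\ell^n]$ --- it suffices to prove this triviality.

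The next step is to use that $p\nmid N_E$ forces $E$ to have good reduction at $p$, so that $E$ extends to a smooth proper model over $\Z_p$ whose special fibre is an elliptic curve $\widetilde{E}$ over $\mathbb{F}_p$. Reduction then furnishes a $\Gal(\overline{\Q}_p/\Q_p)$-equivariant map $E[\ell^n](\overline{\Q}_p)\to\widetilde{E}[\ell^n](\overline{\mathbb{F}}_p)$, and the crux of the argument --- the step I expect to be the main obstacle --- is to show that this map is an isomorphism whenever $\ell\neq p$. For injectivity I would invoke the theory of formal groups: over any finite extension $L/\Q_p$, the kernel of reduction is the group of points of the formal group of $E$ on the maximal ideal of the ring of integers of $L$, a pro-$p$ group with only $p$-power torsion; passing to the union over all such $L$, the kernel of reduction contains no nontrivial $\ell^n$-torsion when $\ell\neq p$. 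A point count then finishes this step, since both groups have order $\ell^{2n}$ as $\ell$ is prime to $\operatorname{char}\mathbb{F}_p=p$, upgrading the injection to a bijection.

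Finally I would observe that the Galois action on the reduced torsion factors through the residue field. The module $\widetilde{E}[\ell^n]$ is defined over $\overline{\mathbb{F}}_p$, and the $\Gal(\overline{\Q}_p/\Q_p)$-action on it is the natural one, hence factors through the surjection $\Gal(\overline{\Q}_p/\Q_p)\twoheadrightarrow\Gal(\overline{\mathbb{F}}_p/\mathbb{F}_p)$ whose kernel is precisely $I_p$. Therefore $I_p$ acts trivially on $\widetilde{E}[\ell^n]$, and transporting this back through the reduction isomorphism shows that $I_p$ acts trivially on $E[\ell^n]$ for all $n$, hence on $T_\ell E$. This is exactly the unramifiedness of $\rho_{E,\ell}$ at $p$. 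The conceptual heart, and the only genuinely nontrivial ingredient, is the formal-group computation underlying the reduction isomorphism; the remaining manipulations with decomposition and inertia groups are bookkeeping.
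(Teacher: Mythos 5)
Your argument is correct: it is the standard good-reduction direction of the N\'eron--Ogg--Shafarevich criterion (injectivity of reduction on prime-to-$p$ torsion via the formal group, surjectivity by counting $\ell^{2n}$ points on both sides, and triviality of the $I_p$-action since the action on $\widetilde{E}[\ell^n]$ factors through $\Gal(\overline{\mathbb{F}}_p/\mathbb{F}_p)$). The paper itself gives no proof of this statement --- it is quoted from Diamond--Shurman, Theorem 9.4.1 --- and your proof follows essentially the same route as that cited source, so there is nothing to correct.
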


\begin{corollary}\label{unramifiedcorollary}
    Let $\ell$ be prime and let $E$ be an elliptic curve over $\Q$ with conductor $N_E$. Then, $\Q(E[\ell])$ is unramified at  every prime $p\nmid \ell N_E.$ 
\end{corollary}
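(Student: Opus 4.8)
The plan is to deduce the corollary directly from the preceding theorem by unwinding the relationship between the mod-$\ell$ Galois representation and the field it cuts out. Recall that $\rho_{E,\ell}\colon \Gal(\overline{\Q}/\Q) \to \mathrm{GL}_2(\mathbb{F}_\ell)$ is the representation giving the action of the absolute Galois group on the $\ell$-torsion $E[\ell] \cong (\mathbb{Z}/\ell\mathbb{Z})^2$. A Galois element acts trivially on $E[\ell]$ precisely when it fixes the coordinates of every $\ell$-torsion point, i.e.\ when it fixes $\Q(E[\ell])$ pointwise. Hence $\ker \rho_{E,\ell} = \Gal(\overline{\Q}/\Q(E[\ell]))$, and by Galois theory $\Q(E[\ell])$ is exactly the fixed field of $\ker\rho_{E,\ell}$; in particular $\Q(E[\ell])/\Q$ is Galois, so ``unramified at $p$'' is unambiguous and independent of the chosen prime above $p$.

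First I would fix a prime $p \nmid \ell N_E$ together with a prime $\mathfrak{P}$ of $\overline{\Q}$ lying over $p$, and let $I_{\mathfrak{P}} \subset \Gal(\overline{\Q}/\Q)$ be the associated inertia subgroup. The assertion of the theorem that $\rho_{E,\ell}$ is unramified at $p$ means, by definition, that $\rho_{E,\ell}(I_{\mathfrak{P}})$ is trivial, equivalently $I_{\mathfrak{P}} \subset \ker\rho_{E,\ell}$.

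Next I would translate this containment into a statement about the finite extension $\Q(E[\ell])/\Q$. Writing $\mathfrak{p} = \mathfrak{P} \cap \Q(E[\ell])$ for the prime of $\Q(E[\ell])$ below $\mathfrak{P}$, the inertia group $I(\mathfrak{p}/p)$ of $\Q(E[\ell])/\Q$ is the image of $I_{\mathfrak{P}}$ under the restriction map $\Gal(\overline{\Q}/\Q) \to \Gal(\Q(E[\ell])/\Q)$, whose kernel is precisely $\Gal(\overline{\Q}/\Q(E[\ell])) = \ker\rho_{E,\ell}$. Since $I_{\mathfrak{P}} \subset \ker\rho_{E,\ell}$, the image $I(\mathfrak{p}/p)$ is trivial, so $e(\mathfrak{p}/p)=1$ and $p$ is unramified in $\Q(E[\ell])$. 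As $p$ and $\mathfrak{P}$ were arbitrary, this yields the corollary.

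This argument is essentially a definition-chase, so I do not anticipate a genuine obstacle. The one point requiring care is the compatibility between the inertia subgroup of the infinite extension $\overline{\Q}/\Q$ and the inertia group of the finite subextension $\Q(E[\ell])/\Q$, namely that restriction carries $I_{\mathfrak{P}}$ onto $I(\mathfrak{p}/p)$; this is the standard functoriality of inertia in the Galois theory of ramification, and it is exactly what converts the representation-theoretic notion of ``unramified'' used in the theorem into the field-theoretic notion claimed in the corollary.
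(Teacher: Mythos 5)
Your proof is correct and takes essentially the same route as the paper's: both identify $\ker \rho_{E,\ell} = \Gal(\overline{\Q}/\Q(E[\ell]))$ and use that the inertia subgroup at a prime above $p$ is contained in this kernel, hence acts trivially on $\Q(E[\ell])$, so $p$ is unramified there. Your explicit verification that restriction carries $I_{\mathfrak{P}}$ onto the inertia group of the finite extension is just a spelled-out version of the paper's observation that $\Q(E[\ell])$ lies in the inertia field $\overline{\Q}^{I_{\mathfrak{p}}}$.
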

\begin{proof}
Let $p \nmid \ell N$, and let $\mathfrak{p}$ be a prime lying over $p$. Then, the inertia group $I_{\mathfrak{p}}$ is contained in the kernel of the map
\begin{equation*}
\rho_{E,\ell} : \Gal(\overline{\mathbb{Q}}/\mathbb{Q}) \to \operatorname{Aut}(E[\ell]) \cong \operatorname{GL}_2(\mathbb{Z}/\ell \mathbb{Z}),
\end{equation*}
where the kernel is $\ker \rho_{E,\ell} = \Gal(\overline{\mathbb{Q}}/\mathbb{Q}(E[\ell]))$. This implies that $\mathbb{Q}(E[\ell])$ is a subfield of the inertia field $\overline{\mathbb{Q}}^{I_{\mathfrak{p}}}$, and hence $\mathbb{Q}(E[\ell])$ is unramified at $p$.
\end{proof}

This corollary is also used as a consequence of the Néron–Ogg–Shafarevich criterion in the literature.

\begin{lemma}\label{43isogeny}
Let $\ell \in \{11, 19, 43, 67, 163\}$ be a prime, and let $p$ be a prime. Then, there exists an elliptic curve $E/\mathbb{Q}$ satisfying
\begin{equation*}
E(\mathbb{Q}(\zeta_p))_{\text{tors}} \cong \mathbb{Z}/\ell \mathbb{Z}
\end{equation*}
if and only if $p = \ell$.
\end{lemma}

\begin{proof}
Assume that $E(\mathbb{Q}(\zeta_p))_{\text{tors}} \cong \mathbb{Z}/\ell \mathbb{Z}$ for some prime $p$ and some $\ell \in \{11, 19, 37, 43, 67, 163\}$.
 By Lemma \ref{n-isogeny}, $E$ must have an $\ell$-isogeny. In Chapter 7 of \cite{Chou}, the $j$-invariants of such elliptic curves are listed. In total, there are $7$ different $j$-invariants, and we can also identify the Cremona labels of the corresponding elliptic curves. Using the LMFDB database, we observe that the conductors of these elliptic curves are equal to $N_E = \ell^2$ when working with the minimal quadratic twists of the corresponding $j$-invariants. This fact, combined with Corollary \ref{unramifiedcorollary}, implies that $\Q(E[\ell])$ is unramified at all primes other than $\ell$. For the remainder of the proof, the elliptic curve $E/\Q$ will always refer to the minimal quadratic twist of its $j$-invariant. 

 Let $P = (x_0, y_0) \in E(\Q(\zeta_p))_{\text{tors}}$ be a point of order $\ell$. Since $\Q(P) \subset \Q(E[\ell])$, we can see that $\Q(P)$ is unramified outside $\ell$.
From Mazur's theorem, we know that the field $\Q(P)$ cannot be equal to $\Q$. Therefore, the prime $\ell$ is ramified in $\Q(P)$.
Due to our assumption, $\Q(E[\ell])$ is contained in the field $\Q(\zeta_p)$; thus, if $\ell$ is ramified in $\Q(\zeta_p)$, then $p = \ell$ must hold.
Indeed, \texttt{SageMath} calculations show that $E(\Q(\zeta_\ell))_{\text{tors}} \cong \Z/\ell\Z$ for the corresponding elliptic curves (except in the case $\ell = 163$, where allocated memory becomes a problem. Please see the following explanation for how to achieve the same result without computer calculations).

On the other hand, our elliptic curve has an $\ell$-isogeny; therefore, there exists a point $P = (x_0, y_0) \in E(\overline{\Q})_{\text{tors}}$ of order $\ell$, and the field $\Q(P)$ is a Galois extension whose Galois group $\Gal(\Q(P)/\Q)$ is isomorphic to a subgroup of $(\Z/\ell\Z)^\times$ by Lemma \ref{isogeny-fieldofdefinition}. Since the Galois group is finite abelian, the Kronecker–Weber theorem implies that there exists a positive integer $n$ such that $\Q(P) \subset \Q(\zeta_n)$. Let $n$ be the minimal such positive integer.

We know that $\Q(P)$ is unramified outside $\ell$, which implies that $n$ must be a power of $\ell$. Moreover, $[\Q(P):\Q]$ divides $\ell - 1$, so $n = \ell$ must hold. Together with Theorem \ref{allgroupslist}, this shows that $E(\Q(\zeta_\ell))_{\text{tors}} \cong \Z/\ell\Z$ for the corresponding elliptic curves.

We have so far proved the lemma for the minimal quadratic twist of the corresponding $j$-invariants. To extend the result to the quadratic twists of those elliptic curves, we consider the extension obtained by adjoining to $\Q$ the $x$-coordinates of the torsion points. Again, let $P = (x_0, y_0) \in E(\Q(\zeta_\ell))_{\text{tors}}$ be a point of order $\ell$, which we know exists by our calculations. The point $P$ is the generator of the kernel of an $\ell$-isogeny (from $E$ to some other elliptic curve).

We aim to show that if $E_d(\Q(\zeta_p))_{\text{tors}} \cong \Z/\ell\Z$ for some prime $p$ and square-free integer $d$, then $p = \ell$ must hold.

It is clear that $P_d = (x_0, y_0 / \sqrt{d}) \in E_d(\overline{\Q})_{\text{tors}}$, the image of $P$ on $E_d$, is the generator of an $\ell$-isogeny (from $E_d$ to some other elliptic curve). Moreover, by Lemma \ref{n-isogeny}, the torsion subgroup $E_d(\Q(\zeta_p))_{\text{tors}}$ is also the kernel of an $\ell$-isogeny. By Theorem \ref{Kenku}, an elliptic curve defined over $\Q$ cannot have two distinct rational $\ell$-isogenies for the primes $\ell \in \{7, 11, 13, 17, 19, 37, 43, 67, 163\}$. This implies that $E_d(\Q(\zeta_p))_{\text{tors}}$ must be generated by the point $P_d = (x_0, y_0 / \sqrt{d})$.
Then, the extensions 
\begin{equation*}
    \Q(x(P))=\Q(x(P_d))=\Q(x_0)
\end{equation*}
are the same. Again, if $\Q(x_0) \ne \Q$, then $\ell$ is ramified in $\Q(x_0)$, which is contained in $\Q(\zeta_p)$, implying $p = \ell$. If $x_0$ is rational, then there exists a quadratic twist $E_{d'}$ such that it has an $\ell$-torsion point over $\Q$, which contradicts Mazur’s theorem.

As a note, by working with the minimal quadratic twist or its $-\ell$-quadratic twist if necessary (see Table \ref{tablom}), and by consulting the LMFDB database, we observe that the field of definition of the $\ell$-torsion has extension degree $(\ell - 1)/2$, which implies that it is equal to $\Q(\zeta_\ell)^+$ for the elliptic curves we are considering. (Note: there is no data available for $\ell = 163$.) Table 2 of \cite{Chou} confirms the same result for all primes in our list.

Using the fact that
\begin{equation*}
  [\Q(P):\Q(x_0)] =  [\Q(x_0,y_0):\Q(x_0)]\leq 2,
\end{equation*}
and since $(\ell - 1)/2$ is odd, it follows that $\Q(x_0) = \Q(\zeta_\ell)^+$, where $x_0$ is the $x$-coordinate of the $\ell$-torsion point $P \in E(\Q(\zeta_\ell))_{\text{tors}}$. Indeed, $\ell$ is ramified in $\Q(x_0)$.
\end{proof}

\begin{lemma}\label{1737isogeny}
      Let $E/\mathbb{Q}$ be an elliptic curve, and let $\ell =17$ or $\ell=37$. Then there is no prime $p$ such that $E(\mathbb{Q}(\zeta_p))_{\text{tors}} \cong \mathbb{Z}/\ell \mathbb{Z}$.
\end{lemma}

\begin{proof}
Assume that $E(\Q(\zeta_p))_{\text{tors}} \cong \Z/\ell\Z$ for some prime $p$ and $\ell = 17$ or $\ell = 37$. By Lemma \ref{n-isogeny}, we know that $E$ has an $\ell$-isogeny. In Chapter 7 of \cite{Chou}, the $j$-invariants of such elliptic curves can be found. There are two different $j$-invariants for each case, totaling four. Let $E$ be the minimal quadratic twist of one of the corresponding $j$-invariants. We can also find their Cremona labels in the same chapter. Let $P = (x_0, y_0) \in E(\Q(\zeta_p))_{\text{tors}}$ be a point of order $\ell$.

In order to deal with all the quadratic twists at once and show that none of them can have $\ell$-torsion over $\Q(\zeta_p)$ for any prime $p$, we will show that the extension $\Q(x_0)$, obtained by adjoining $\Q$ with the $x$-coordinate of the $\ell$-torsion point, cannot be contained in any $\Q(\zeta_p)$.

If $x_0$ is rational, then $y_0^2$ is also rational (we are using the short Weierstrass form). Hence, $E_d(\Q)$ contains $\ell$-torsion for some quadratic twist $E_d$ of $E$, which contradicts Mazur’s theorem. Then, since $\Q(x_0) \subset \Q(\zeta_p)$, we can see that $p$ is ramified in $\Q(x_0)$. Using the inclusions
\begin{equation*}
\Q(x_0) \subset \Q(x_0, y_0) = \Q(P) \subset \Q(E[\ell]),  
\end{equation*}
we see that $p$ is ramified in $\Q(E[\ell])$. By Corollary \ref{unramifiedcorollary}, we observe that $p = \ell$ or $p \mid N_E$, where $N_E$ is the conductor of the elliptic curve.

This is where using the minimal quadratic twist of the corresponding $j$-invariant helps us. Even though the conductor changes under quadratic twists, we can work with one of them by focusing only on the $x$-coordinates.

In the $\ell = 17$ case, the minimal quadratic twist has conductor $N_E = 14450$ in both instances, so the only possibilities are $p = 5$ or $p = 17$. Similarly, in the $\ell = 37$ case, the minimal quadratic twist has conductor $N_E = 1225$ in both instances, so the only possibilities are $p = 5, 7, 37$.

To see that $\Q(x_0)$ is not contained in the field $\Q(\zeta_p)$ for the given primes, we factor the $\ell$-th division polynomial of the elliptic curves over $\Q(\zeta_p)$ using \texttt{SageMath} and observe that they do not have a root in those fields. This concludes our proof.
\end{proof}

\section{Relaxing the Conditions}\label{relaxingsection}
In this section, we will discuss the implications of relaxing the conditions on our prime $p$ when classifying $E(\mathbb{Q}(\zeta_p))_{\text{tors}}$. In our first theorem, we classified all possible torsion subgroups when $p-1$ is not divisible by $3$, $4$, or $5$. Now, we will gradually relax these conditions. As a result, the number of groups appearing in the classification will increase. One of the first theorems we obtain is the following.

\begin{theorem}\label{somerelaxation}
    Let $E/\mathbb{Q}$ be an elliptic curve.
Let $p > 3$ be a prime such that $p - 1$ is not divisible by $3$ or $4$. Then $E(\mathbb{Q}(\zeta_p))_{\text{tors}}$ is either one of the groups from Mazur’s Theorem, or one of the following groups:
\begin{equation*}
\mathbb{Z}/11\mathbb{Z}, \;\mathbb{Z}/16\mathbb{Z} , \;\mathbb{Z}/25\mathbb{Z}, \; \mathbb{Z}/2\mathbb{Z} \times \mathbb{Z}/10\mathbb{Z}, \; \text{or } \;\mathbb{Z}/2\mathbb{Z} \times \mathbb{Z}/12\mathbb{Z}.
\end{equation*}
 Among these, the groups $\mathbb{Z}/11\mathbb{Z}$, $\mathbb{Z}/25\mathbb{Z}$, $\mathbb{Z}/2\mathbb{Z} \times \mathbb{Z}/10\mathbb{Z}$, and $\mathbb{Z}/2\mathbb{Z} \times \mathbb{Z}/12\mathbb{Z}$ are realized for at least one prime $p$ satisfying the conditions given in the theorem. Moreover, the group $\mathbb{Z}/11\mathbb{Z}$ is realized only when $p = 11$. The group $\mathbb{Z}/16\mathbb{Z}$ is theoretically possible under these assumptions, but no example is currently known.
\end{theorem}

\begin{proof}
  The proof is the same as that of Theorem \ref{maintheorem}. The only difference is in the application of Lemma \ref{mainlemma} in the proof. Since we have removed the condition that $p - 1$ is not divisible by $5$, we cannot conclude that $\mathbb{Z}/11\mathbb{Z}$ or $\mathbb{Z}/25\mathbb{Z}$ are not realizable. We can use Lemma \ref{43isogeny} to see that $11$-torsion is realized over $\mathbb{Q}(\zeta_p)$ only when $p = 11$ and only for $3$ different $j$-invariants (see Table \ref{tablom}). The elliptic curve \lmfdbec{11}{a}{3} satisfies $E(\mathbb{Q}(\zeta_{11}))_{\text{tors}} \cong \mathbb{Z}/25\mathbb{Z}$. We could not find any other prime $p$ satisfying $E(\mathbb{Q}(\zeta_p))_{\text{tors}} \cong \mathbb{Z}/25\mathbb{Z}$ for some elliptic curve $E/\mathbb{Q}$, and it is beyond our scope to show that no other primes exist.
  The groups $\mathbb{Z}/2\mathbb{Z} \times \mathbb{Z}/10\mathbb{Z}$ and $\mathbb{Z}/2\mathbb{Z} \times \mathbb{Z}/12\mathbb{Z}$ are realized for $p=59$ and $p=47$ respectively (see Section \ref{realizationsection}).
\end{proof}

\begin{theorem}\label{mostmaintheorem}
     Let $E/\mathbb{Q}$ be an elliptic curve. 
Let $p>3$ be a prime such that $p-1$ is not divisible by $3$. Then $E(\mathbb{Q}(\zeta_p))_{\text{tors}}$ is either one of the groups from Mazur’s Theorem, or one of the following groups: 
$$E(\Q(\zeta_p))_{\text{tors}} \simeq 
  \begin{cases}
      \mathbb{Z} / N \mathbb{Z} & \text{ with } 1\leq N\leq 13, \text{ or } N=15,16,25, \\
           \mathbb{Z} / 2 \mathbb{Z} \times      \mathbb{Z} /2 N \mathbb{Z} & \text{ with } 1\leq N\leq 6, \text{ or } N=8,\\
           \mathbb{Z}/5\mathbb{Z} \times \mathbb{Z}/5\mathbb{Z}.\\
  \end{cases}  $$

  Among these, all groups are realized for at least one prime $p$ satisfying the conditions of the theorem, with the exception of the group $\mathbb{Z}/2\mathbb{Z} \times \mathbb{Z}/16\mathbb{Z}$, for which no example is currently known. 
  Moreover, the group $\Z/11 \Z$ is realized only when $p=11$. The groups $\Z/5\Z \times \Z/5\Z$ and $\Z/15\Z$ are realized only when $p=5$.
\end{theorem}

\begin{proof}
 The proof is the same as in Theorem \ref{maintheorem}. The only difference lies in the use of Lemma \ref{mainlemma} 
 during the proof. Since we removed the condition that $p - 1$ is not divisible by $4$ or $5$, we cannot rule out the 
 possibility that $\mathbb{Z}/N\mathbb{Z}$ is realizable for
 $N = 11, 13, 15,$ or $25$. Similarly, we cannot exclude the 
 possibility of $\mathbb{Z}/2\mathbb{Z} \times \mathbb{Z}/16\mathbb{Z}$. We also cannot rule out the cases
 $\mathbb{Z}/17\mathbb{Z}$ and $\mathbb{Z}/37\mathbb{Z}$ 
 using Lemma \ref{mainlemma}, but Lemma \ref{1737isogeny} allows us to exclude them. 
 
 One final exception is that we cannot eliminate the $\mathbb{Z}/5\mathbb{Z} \times \mathbb{Z}/5\mathbb{Z}$ case when $p = 5$, because the Weil pairing argument does not apply. Indeed, the elliptic curve \lmfdbec{11}{a}{2} satisfies $E(\mathbb{Q}(\zeta_{5}))_{\text{tors}} \cong \mathbb{Z}/5\mathbb{Z} \times \mathbb{Z}/5\mathbb{Z}$.

 By Lemma \ref{43isogeny}, $11$-torsion is realized over $\mathbb{Q}(\zeta_p)$ only when $p = 11$. 
The elliptic curve \lmfdbec{11}{a}{3} satisfies $E(\mathbb{Q}(\zeta_{11}))_{\text{tors}} \cong \mathbb{Z}/25\mathbb{Z}$. 

The elliptic curve \lmfdbec{50}{b}{3} satisfies $E(\mathbb{Q}(\zeta_5))_{\text{tors}} \cong \mathbb{Z}/15\mathbb{Z}$. In the proof of our next theorem, we show that $E(\mathbb{Q}(\zeta_p))_{\text{tors}} \cong \mathbb{Z}/15\mathbb{Z}$ is realized only when $p = 5$.

The elliptic curve \lmfdbec{2890}{a}{2} satisfies $E(\mathbb{Q}(\zeta_{17}))_{\text{tors}} \cong \mathbb{Z}/13\mathbb{Z}$. This data is available on the 
LMFDB. The field of definition of the $13$-torsion point is a
quartic field with discriminant $17^3$ and Galois group 
isomorphic to $\mathbb{Z}/4\mathbb{Z}$. Using the ramified 
primes and the Kronecker–Weber theorem, we can deduce that
this field is a subfield of $\mathbb{Q}(\zeta_{17})$. By 
Theorem~\ref{allgroupslist}, we conclude that the torsion 
subgroup over $\mathbb{Q}(\zeta_{17})$ must be isomorphic to 
$\mathbb{Z}/13\mathbb{Z}$, since a larger torsion subgroup is not possible.

We have no examples of $\mathbb{Z}/2\mathbb{Z} \times \mathbb{Z}/16\mathbb{Z}$ for any prime $p$, and our methods are not effective in ruling out its existence when $p\equiv 1\pmod{4}$. 

 The groups $\mathbb{Z}/2\mathbb{Z} \times \mathbb{Z}/10\mathbb{Z}$, $\mathbb{Z}/2\mathbb{Z} \times \mathbb{Z}/12\mathbb{Z}$ and $\Z/16\Z$ are realized for $p=59$, $p=47$ and $p=41$ respectively (see Section \ref{realizationsection}).
\end{proof}

\begin{theorem}\label{everycase}
     Let $E/\mathbb{Q}$ be an elliptic curve. 
Let $p>3$ be a prime. Then $E(\mathbb{Q}(\zeta_p))_{\text{tors}}$ is either one of the groups from Mazur’s Theorem, or one of the following groups: 
$$E(\Q(\zeta_p))_{\text{tors}} \simeq 
  \begin{cases}
      \mathbb{Z} / N \mathbb{Z} & \text{ with } 1\leq N\leq 16, \text{ or } N=18,19,25,43,67,163, \\
           \mathbb{Z} / 2 \mathbb{Z} \times      \mathbb{Z} /2 N \mathbb{Z} & \text{ with } 1\leq N\leq 9,\\
               \mathbb{Z} / 5 \mathbb{Z} \times      \mathbb{Z} /5 \mathbb{Z}.\\
  \end{cases}  $$
    Among these, all groups are realized for at least one prime $p>3$, with the exception of the group $\mathbb{Z}/2\mathbb{Z} \times \mathbb{Z}/16\mathbb{Z}$, for which no example is currently known. 
  Moreover, for $\ell\in\{11,19,43,67,163\}$, the group $\Z/\ell \Z$ is realized only when $p=\ell$. The groups $\Z/5\Z \times \Z/5\Z$ and $\Z/15\Z$ are realized only when $p=5$. The group $\Z/14\Z$ is realized only when $p=7$.
\end{theorem}

\begin{proof}
 Although the proof shares similarities with that of Theorem \ref{maintheorem}, Lemma \ref{mainlemma} is not applicable here. Instead, we eliminate groups from the list in Theorem \ref{allgroupslist} by analyzing each group type individually.

The groups of the form $\mathbb{Z}/N\mathbb{Z} \times \mathbb{Z}/MN\mathbb{Z}$ are not realized when $N \in \{3,4,5,6,8\}$ because $\zeta_N$ is not contained in $\mathbb{Q}(\zeta_p)$ for $p > 5$. On the other hand, the torsion subgroup $\mathbb{Z}/5\mathbb{Z} \times \mathbb{Z}/5\mathbb{Z}$ is realized over $\mathbb{Q}(\zeta_5)$, with the elliptic curve \lmfdbec{11}{a}{2} serving as an example.

Among the remaining subgroups, $\mathbb{Z}/\ell \mathbb{Z}$ is realized only when $p = \ell$, for $\ell \in \{11, 19, 43, 67, 163\}$, as shown by Lemma \ref{43isogeny}. Moreover, Lemma \ref{1737isogeny} establishes that $\mathbb{Z}/17\mathbb{Z}$ and $\mathbb{Z}/37\mathbb{Z}$ are not realized over any $\mathbb{Q}(\zeta_p)$ for any prime $p$.

Now, we show that $\mathbb{Z}/N\mathbb{Z}$ is not realized for any prime $p$ when $N \in \{21, 27\}$, while $N=14$ is realized only for $p=7$, and $N=15$ only for $p=5$. Assume that $E(\mathbb{Q}(\zeta_p))_{\text{tors}} \cong \mathbb{Z}/N\mathbb{Z}$ for some prime $p$. Then, by Lemma \ref{n-isogeny}, $E$ must admit an $N$-isogeny.

By examining Table 2 in \cite{Chou} and Table 4 in \cite{fieldofdefinition}, we see that this occurs for only eleven different $j$-invariants. Using the LMFDB database, we analyze the field of definition of their $N$-torsion, where our elliptic curves correspond to the minimal quadratic twists of these $j$-invariants, or their $-3$-quadratic twists in some cases where this is more convenient (see Table \ref{tablom}).

In the case of $N=14$, there are two distinct $j$-invariants. When working with their minimal quadratic twists, the torsion growth observed on the LMFDB database shows that the field of definition for the $14$-torsion with an abelian Galois group is exactly $\mathbb{Q}(\zeta_7)$. Let us focus on the unique $7$-isogeny. We see that its kernel is defined over $\Q(\zeta_7)$. This means there exists a point $P=(x_0,y_0)\in E(\overline{\Q})_{\text{tors}}$ of order $7$, with $\Q(P)=\Q(\zeta_7)$. Moreover, the set
\begin{equation*}
\langle P\rangle= \{aP\in E(\overline{\Q})_{\text{tors}}:1\leq a\leq 7\} \subset E(\overline{\Q})_{\text{tors}},
\end{equation*}
is the kernel of the unique (uniqueness follows from Theorem \ref{Kenku}) $7$-isogeny (from $E$ to another elliptic curve), by Lemma \ref{n-isogeny}. Considering the fact
\begin{equation*}
[\Q(P):\Q(x_0)] = [\Q(x_0,y_0):\Q(x_0)] \leq 2,
\end{equation*}
we see that $\Q(x_0)$ has extension degree $3$ or $6$ over $\Q$. Therefore, $\Q(\zeta_7)^+ \subset \Q(x_0)$ and $7$ is ramified at $\Q(x_0)$.

Let $E_d(\Q(\zeta_p))_{\text{tors}}\cong \Z/14\Z$ for some prime $p$ and square-free integer $d$. Then, $E_d$ has a $7$-isogeny by Lemma \ref{n-isogeny}, and it is unique by Theorem \ref{Kenku}. Thus, its kernel is generated by the point $P_d=(x_0,y_0/\sqrt{d})\in E_d(\overline{\Q})_{\text{tors}}$, which is the image of $P$ on the quadratic twist $E_d$. Then, $P_d\in E_d(\Q(\zeta_p))_{\text{tors}}$ implies $\Q(x_0)\subset \Q(\zeta_p)$, implying that $7$ is ramified at $\Q(\zeta_p)$. This implies that $14$-torsion is not realized over $\mathbb{Q}(\zeta_p)$ when $p \neq 7$.

For the case of $N = 15$, there are $4$ different $j$-invariants with a $15$-isogeny. Let $E/\Q$ have one of the corresponding $j$-invariants; then there is a point $P=(x_0,y_0)\in E(\overline{\Q})_{\text{tors}}$ of order $15$ such that the set
\begin{equation*}
\langle P\rangle= \{aP\in E(\overline{\Q})_{\text{tors}}:1\leq a\leq 15\} \subset E(\overline{\Q})_{\text{tors}}
\end{equation*}
is the kernel of a $15$-isogeny (from $E$ to some 
other elliptic curve). Moreover, this $15$-isogeny 
is unique due to Theorem \ref{Kenku}. This means 
that $\langle P_d \rangle \subset E_d(\overline{\Q})_{\text{tors}}$ is the kernel of the unique $15$-isogeny of $E_d$. Here, as usual, $P_d=(x_0,y_0/\sqrt{d})$ denotes the image of $P$ on the
quadratic twist $E_d$. Then, to deal with all 
quadratic twists at once, we can pick one of them 
and consider the extension $\Q(x_0)$, because it is 
the same extension for all quadratic twists of $E$. 
Our \texttt{SageMath} calculations show that $5$ is ramified 
at $\Q(x_0)$ for all $4$ different $j$-invariants.
Thus, $5$ must be ramified at $\Q(\zeta_p)$, 
considering $\Q(x_0)\subset \Q(\zeta_p)$. This 
implies that $15$-torsion is not realized over 
$\mathbb{Q}(\zeta_p)$ when $p \neq 5$.

For the case of $N = 21$, there are $4$ different $j$-invariants with a $21$-isogeny. Let $E/\Q$ have one of the corresponding $j$-invariants; then there is a point $P=(x_0,y_0)\in E(\overline{\Q})_{\text{tors}}$ of order $21$ such that the set
\begin{equation*}
\langle P\rangle= \{aP\in E(\overline{\Q})_{\text{tors}}:1\leq a\leq 21\} \subset E(\overline{\Q})_{\text{tors}}
\end{equation*}
is the kernel of a $21$-isogeny (from $E$ to some other elliptic curve). Moreover, this $21$-isogeny is unique due to Theorem \ref{Kenku}. This means that $\langle P_d \rangle \subset E_d(\overline{\Q})_{\text{tors}}$ is the kernel of the unique $21$-isogeny of $E_d$. Here, as usual, $P_d=(x_0,y_0/\sqrt{d})$ denotes the image of $P$ on the quadratic twist $E_d$. Then, to deal with all quadratic twists at once, we can pick one of them and consider the extension $\Q(x_0)$, because it is the same extension for all quadratic twists of $E$. Our \texttt{SageMath} calculations show that $3$ is ramified at $\Q(x_0)$ for all $4$ different $j$-invariants. Thus, $3$ must be ramified at $\Q(\zeta_p)$, considering $\Q(x_0)\subset \Q(\zeta_p)$. This implies that $21$-torsion is not realized over $\mathbb{Q}(\zeta_p)$ when $p \neq 3$. By Theorem \ref{najmanquadratic}, we see that $E(\Q(\zeta_3))_{\text{tors}}\cong \Z/21\Z$ is not possible.

Finally, for the case of $N = 27$, we note that there is only one rational $j$-invariant with a $27$-isogeny, namely when $j = -2^{15} \cdot 3 \cdot 5^3$. This corresponds to the elliptic curve \lmfdbec{27}{a}{2} and its quadratic twists. Moreover, for any elliptic curve $E/\Q$ with $j(E)= -2^{15} \cdot 3 \cdot 5^3$, the $27$-isogeny is unique due to Theorem \ref{Kenku}. Let $E$ denote the minimal quadratic twist and $P=(x_0,y_0)\in E(\overline{\Q})_{\text{tors}}$ be a point of order $27$ generating the kernel of the $27$-isogeny.

By using the LMFDB we can see that $\Q(P)=\mathbb{Q}(\zeta_{27})^+$. Considering the fact
\begin{equation*}
[\Q(P):\Q(x_0)]= [\Q(x_0,y_0):\Q(x_0)]\leq 2,
\end{equation*}
and $[\Q(\zeta_{27})^+:\Q]=9$, we can see that $\Q(x_0)=\Q(\zeta_{27})^+$. In any quadratic twist $E_d$, the unique $27$-isogeny is generated by the point $P_d=(x,y/\sqrt{d})\in E_d(\overline{\Q})_{\text{tors}}$, hence the extension $\Q(x_0)$ remains the same. Therefore, $27$-torsion is not realized over $\mathbb{Q}(\zeta_p)$ for any prime $p$.

 The groups $\mathbb{Z}/2\mathbb{Z} \times \mathbb{Z}/10\mathbb{Z}$, $\mathbb{Z}/2\mathbb{Z} \times \mathbb{Z}/12\mathbb{Z}$ and $\Z/16\Z$ are realized for $p=59$, $p=47$ and $p=41$ respectively (see Section \ref{realizationsection}).
\end{proof}

We discuss, in more detail, the realization of torsion group structures other than those appearing in Mazur’s theorem in the next section.

With the theorem provided, we can classify the torsion subgroups of elliptic curves defined over $\mathbb{Q}(\zeta_p)$ for primes $p > 3$. To complete the classification, we can refer to the work of Gu\v{z}vi\'{c} and Vukorepa in \cite{guzvicvukorepa}, who classified the torsion subgroups for $p = 5, 7, 11$. By incorporating their results, we can enhance the classification for these primes.

While our result offers a general classification for any $p > 3$, the methods we used also provide a way to refine the list of realizable torsion subgroups for specific primes. By focusing on a particular prime $p$, one can further improve the classification and identify the torsion subgroups realized over $\mathbb{Q}(\zeta_p)$. Although it is possible to obtain an exact list for each prime $p$, the task becomes increasingly difficult as $p$ grows larger.

\begin{theorem}[Gu\v{z}vi\'{c}, Vukorepa, \cite{guzvicvukorepa}]
    Let $E/\Q$ be an elliptic curve and let $p \in \{5,7,11\}$ be a prime number. Apart from the
groups in Mazur’s theorem, the group $E(\Q(\zeta_p))_{\text{tors}}$ can only be isomorphic to one of the following
groups:
\begin{enumerate}
    \item If $p=5$, 
    \begin{equation*}
      \Z/15\Z,  \quad \Z/16\Z, \text{ and} \quad    \Z/5\Z \times \Z/5\Z.
    \end{equation*}

     \item If $p=7$, 
    \begin{equation*}
        \Z/13\Z, \quad \Z/14\Z, \quad \Z/18\Z, \quad
        \Z/2\Z \times \Z/14\Z,  \text{ and} \quad \Z/2\Z \times \Z/18\Z.
    \end{equation*}

     \item If $p=11$, 
    \begin{equation*}
       \Z/11\Z,  \quad \Z/25\Z, \text{ and} \quad    \Z/2\Z \times \Z/10\Z.
    \end{equation*}

\end{enumerate}
\end{theorem}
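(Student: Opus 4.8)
The plan is to specialize the elimination scheme of Theorem \ref{maintheorem} to the three fields $\Q(\zeta_5),\Q(\zeta_7),\Q(\zeta_{11})$, whose degrees $n=p-1$ are $4,6,10$. Each is an abelian Galois number field, so Theorem \ref{allgroupslist} furnishes a finite starting list, and everything reduces to discarding the entries that are neither Mazur groups nor among the named exceptions. The first and easiest pass handles the groups $\Z/m\Z\times\Z/mN\Z$ with $m\ge 3$: by the Weil pairing such a subgroup forces $\mu_m\subset\Q(\zeta_p)$, and the roots of unity in $\Q(\zeta_p)$ are exactly the $2p$-th roots of unity, so $m\mid 2p$. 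For our three primes this leaves only $m\in\{1,2\}$, together with the single extra value $m=5$ when $p=5$, which is precisely the surviving $\Z/5\Z\times\Z/5\Z$. This removes all $\Z/3\Z\times\cdots$, $\Z/4\Z\times\cdots$, $\Z/6\Z\times\Z/6\Z$ and $\Z/8\Z\times\Z/8\Z$ entries in one stroke.

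The cyclic groups of odd prime-power order come next. For those $N$ with $\gcd(n,\phi(N))\le 2$, Lemma \ref{4pairinggeneralmore} applies directly and realizes $E(\Q(\zeta_p))[N]$ as the $N$-part of a quadratic twist $E_d$ over the unique quadratic subfield, whence Mazur's theorem (or Theorem \ref{najmanquadratic}) forbids the offending $N$; a quick gcd computation disposes of many values, e.g. $N=11$ for $p=5$ since $\gcd(4,10)=2$, $N=25$ for $p=7$ since $\gcd(6,20)=2$, and $N=13$ for $p=11$ since $\gcd(10,12)=2$. For the larger primes $\ell\in\{11,19,37,43,67,163\}$ I would instead invoke Lemma \ref{43isogeny}, which pins realization of $\Z/\ell\Z$ to $p=\ell$; among $\{5,7,11\}$ only $\ell=11$ at $p=11$ survives, and $\Z/17\Z$ is excluded outright. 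The composite values $N=15,18,21,27$ are treated as in Section \ref{relaxingsection}: Lemma \ref{n-isogeny} produces a rational $N$-isogeny, confining $j(E)$ to the finite lists of \cite{Chou} and \cite{fieldofdefinition}, after which the abelian field of definition of the $N$-torsion is read from LMFDB and tested for containment in the relevant $\Q(\zeta_p)$.

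The real work is concentrated in the borderline groups, where the degree $n=p-1$ fails exactly the divisibility hypotheses that make the clean lemmas run: $4\mid n$ for $p=5$, $3\mid n$ for $p=7$, $5\mid n$ for $p=11$. These are the residual cyclic cases $\Z/13\Z$ and $\Z/25\Z$ at the prime where the gcd argument degenerates, the full-two-torsion groups $\Z/2\Z\times\Z/2N\Z$ with $5\le N\le 9$, and the $2$-power groups $\Z/16\Z$ and $\Z/2\Z\times\Z/16\Z$ (for which Lemma \ref{4pairinggeneralmore} gives nothing, as it is stated only for odd primes). For the full-two-torsion families I would use that $\Q(E[2])\subseteq\Q(\zeta_p)$ together with the fact that the $2$-division cubic cannot be irreducible unless $3\mid n$; when $3\nmid n$ this yields a rational $2$-torsion point, which combined with the twist description of the odd part recovers the $N=7,9$ exclusions of Lemma \ref{mainlemma} and isolates $\Z/2\Z\times\Z/10\Z$, whose associated twist produces only the Mazur group $\Z/10\Z$ over $\Q$ and therefore persists at $p=11$. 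The stubborn $2$-power groups, and $\Z/2\Z\times\Z/12\Z$ at the small primes, require the orbit-of-$x$-coordinate technique of Lemma \ref{mod16} adapted to these degrees, an analysis of the Galois character on the cyclic kernel (whose image in $(\Z/16\Z)^\times$ is constrained by $\gcd$ with $n$), or an explicit examination of the finitely many curves carrying both full $2$-torsion and the relevant $8$- or $16$-isogeny.

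I expect this last paragraph to be the main obstacle. Because the uniform lemmas degenerate precisely at $n=p-1$ for these three primes, the surviving exceptional groups cannot be excluded — or, as with $\Z/16\Z$ over $\Q(\zeta_5)$, must be realized — by a single divisibility argument, and one is forced into case-by-case computations with explicit $j$-invariants, conductors, and fields of definition, including the delicate point that the \emph{specific} quadratic field over which a twist attains a given Najman structure is determined by $E$ and need not be the quadratic subfield of $\Q(\zeta_p)$. Once the list is pinned down, the realizations are completed by exhibiting witnesses such as $11a1$ for $\Z/5\Z\times\Z/5\Z$ over $\Q(\zeta_5)$.
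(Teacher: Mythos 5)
First, a structural point you could not have known blindly: the paper does \emph{not} prove this statement. It is imported verbatim as a theorem of Gu\v{z}vi\'{c} and Vukorepa \cite{guzvicvukorepa}, and the surrounding discussion explicitly defers the primes $p=5,7,11$ to that external work, using it to complete the classification that the paper's own methods give only for larger $p$. So there is no internal proof to compare against, and your proposal has to stand as a self-contained argument. Its routine passes are correct: the Weil-pairing elimination of $\Z/m\Z\times\Z/mN\Z$ for $m\in\{3,4,6,8\}$, the $\gcd\le 2$ applications of Lemma \ref{4pairinggeneralmore} combined with Theorem \ref{najmanquadratic} (e.g.\ $N=11$ at $p=5$ since $\gcd(4,10)=2$, $N=25$ at $p=7$, $N=13$ at $p=11$), and Lemma \ref{43isogeny} for $\ell\in\{11,19,37,43,67,163\}$ and for $\ell=17$ all go through and match how one would specialize the paper's machinery.

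The genuine gap is that your third paragraph, which you correctly identify as the crux, contains no argument that can work. For $N\in\{13,16,18,25\}$ the modular curve $X_0(N)$ has genus \emph{zero}, so there are infinitely many $j$-invariants with a rational $N$-isogeny; your fallback of "explicit examination of the finitely many curves" via conductors and LMFDB fields of definition is unavailable for exactly these $N$, and Theorem \ref{FieldofDefinition} explicitly excludes $p=13$ and $p=37$, so no degree lower bound rescues you either. Thus excluding $\Z/13\Z$ and $\Z/25\Z$ over $\Q(\zeta_5)$ (where $\gcd(4,12)=\gcd(4,20)=4$ and Lemma \ref{4pairinggeneralmore} degenerates), and excluding $\Z/16\Z$ and $\Z/2\Z\times\Z/12\Z$ over $\Q(\zeta_7)$ (where $3\mid n$ voids the $2$-division-cubic step and the orbit argument of Lemma \ref{mod16}, while Theorem \ref{najmanquadratic} \emph{permits} these groups over quadratic fields), genuinely requires computing images of mod-$N$ Galois representations restricted to $\Q(\zeta_p)$, or rational points on higher-level modular curves --- which is the actual content of \cite{guzvicvukorepa}, not a refinement of the paper's divisibility lemmas. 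Similarly, even where the orbit argument does apply at $p=11$ ($n=10$ is prime to $12$) and pushes a putative point of order $16$ into $\Q(\sqrt{-11})$, you still need the nontrivial computational fact that no rational elliptic curve attains $\Z/16\Z$ over $\Q(\sqrt{-11})$ (cf.\ Banwait--Derickx \cite{bander}); nothing in the paper, and nothing in your sketch, supplies it. In short: you have a correct plan for the routine eliminations and an accurate diagnosis of where the difficulty lives, but at those decisive spots the proposal is a statement of the problem rather than a proof.
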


For the case of $p = 3$, one can refer to Najman's results in
\cite{NajmanQuadratic}, where he provides a general 
classification of torsion subgroups over all quadratic 
fields. In a separate work \cite{najmanquadraticcyclotomic}, 
he gives a more specific classification for the field 
$\mathbb{Q}(\sqrt{-3})$ (and also for $\mathbb{Q}(\sqrt{-1})$), which directly addresses the case $p = 3$.
This classification, along with the general methods 
discussed, helps refine the understanding of torsion 
subgroups for this particular prime.

For $p = 2$, we have the trivial case where $\mathbb{Q}(\zeta_2) = \mathbb{Q}$, and thus the classification of torsion subgroups for elliptic curves over $\mathbb{Q}$ is given by Mazur's theorem. Mazur's theorem provides the possible torsion subgroups over the rationals, which is a well-established and complete classification.

In summary, the cases $p = 2$ and $p = 3$ are well-covered by Mazur's theorem and Najman's work, respectively, while the cases $p = 5, 7, 11$ are covered by Gu\v{z}vi\'{c} and Vukorepa. Our approach for $p > 3$ provides a more general classification, along with methods to further refine the possibilities for specific primes.

We conclude this section by presenting a comprehensive table that summarizes Theorems \ref{maintheorem}, \ref{somerelaxation}, \ref{mostmaintheorem}, and \ref{everycase}. This table includes the conditions on the primes, the exceptional torsion subgroups (i.e., those not appearing in Mazur’s theorem) that can occur, and specifies for which groups explicit examples are known or remain unknown. While some examples have been provided within the proofs of these theorems, a more detailed discussion of these examples, along with additional constructions, will be presented in the next section.

\begin{table}[H]
\centering
\small % Or \scriptsize if still too wide
\caption{Summary of exceptional torsion subgroups over cyclotomic fields and their realizability}
\begin{tabular}{|>{\raggedright\arraybackslash}p{4cm}|
                >{\raggedright\arraybackslash}p{8.8cm}|
                >{\raggedright\arraybackslash}p{3cm}|}
\hline
\textbf{Divisibility conditions on primes $p>3$} & \textbf{Possible exceptional groups} & \textbf{Groups without known examples} \\
\hline
$p-1$ is not divisible by $3$, $4$, or $5$ & 
$\mathbb{Z}/16\mathbb{Z}$, 
$\mathbb{Z}/2\mathbb{Z} \times \mathbb{Z}/10\mathbb{Z}$, 
$\mathbb{Z}/2\mathbb{Z} \times \mathbb{Z}/12\mathbb{Z}$ & 
$\mathbb{Z}/16\mathbb{Z}$ \\
\hline
$p-1$ is not divisible by $3$ or $4$ & 
\begin{tabular}{@{}l@{}}
$\mathbb{Z}/N\mathbb{Z}$ with $N=11,16,25$, \\
$\mathbb{Z}/2\mathbb{Z} \times \mathbb{Z}/2N\mathbb{Z}$ with $N=5,6$
\end{tabular} & 
$\mathbb{Z}/16\mathbb{Z}$ \\
\hline
$p-1$ is not divisible by $3$ & 
\begin{tabular}{@{}l@{}}
$\mathbb{Z}/N\mathbb{Z}$ with $N=11,13,15,16,25$, \\
$\mathbb{Z}/2\mathbb{Z} \times \mathbb{Z}/2N\mathbb{Z}$ with $N=5,6,8$, \\
$\mathbb{Z}/5\mathbb{Z} \times \mathbb{Z}/5\mathbb{Z}$
\end{tabular} & 
$\mathbb{Z}/2\mathbb{Z} \times \mathbb{Z}/16\mathbb{Z}$ \\
\hline
Any prime & 
\begin{tabular}{@{}l@{}}
$\mathbb{Z}/N\mathbb{Z}$ with $N=11,13,14,15,16,18,19,25,43,67,163$, \\
$\mathbb{Z}/2\mathbb{Z} \times \mathbb{Z}/2N\mathbb{Z}$ with $N=5,6,7,8,9$, \\
$\mathbb{Z}/5\mathbb{Z} \times \mathbb{Z}/5\mathbb{Z}$
\end{tabular} & 
$\mathbb{Z}/2\mathbb{Z} \times \mathbb{Z}/16\mathbb{Z}$ \\
\hline
\end{tabular}
\label{tab:torsion-summary}
\end{table}

\section{Realization of Torsion Subgroups}\label{realizationsection}

In this section, we begin by discussing the realization of the three torsion subgroups that appear in Theorem \ref{maintheorem} but are not included in Mazur's theorem. These subgroups must be examined in the context of quadratic subfields. The first result in this section demonstrates that these torsion subgroups can indeed be realized over quadratic fields. We then present two subsequent theorems that provide methods for identifying and locating these torsion subgroups within such fields.

Later in the section, we expand the classification by considering torsion subgroups that arise when the conditions of Theorem \ref{maintheorem} are relaxed. In particular, Lemma \ref{43isogeny} has already shown that certain torsion subgroups are realized over $\mathbb{Q}(\zeta_p)$ for a unique prime $p$. We further explore how relaxing specific conditions permits the realization of additional torsion subgroups over particular number fields.

\begin{theorem}
   Let $E/\mathbb{Q}$ be an elliptic curve. 
Let $p>3$ be a prime such that $p-1$ is not divisible by $3,4,5,7,11$. If $E(\mathbb{Q}(\zeta_p))_{\text{tors}}$ is isomorphic to one of the following groups $\mathbb{Z}/2\mathbb{Z} \times \mathbb{Z}/10\mathbb{Z}, \mathbb{Z}/2\mathbb{Z} \times \mathbb{Z}/12\mathbb{Z}$ or $\mathbb{Z}/16\mathbb{Z}$, then $E(\mathbb{Q}(\zeta_p))_{\text{tors}} = E(\mathbb{Q}(\sqrt{-p}))_{\text{tors}}$. 
\end{theorem}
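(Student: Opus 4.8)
The plan is to exploit that $\Gal(\Q(\zeta_p)/\Q)$ is cyclic of order $p-1$ and acts on the Galois-stable finite group $T := E(\Q(\zeta_p))_{\text{tors}}$, so that this action factors through a homomorphism $\rho\colon \Gal(\Q(\zeta_p)/\Q) \to \Aut(T)$. Since the source is cyclic, the image $\rho(\Gal(\Q(\zeta_p)/\Q))$ is cyclic, and its order $d$ divides both $p-1$ and $|\Aut(T)|$, hence $d \mid \gcd(p-1,|\Aut(T)|)$. First I would record, exactly as in the proof of Theorem \ref{maintheorem}, that the hypothesis $4 \nmid p-1$ forces $p \equiv 3 \pmod 4$, so that the unique quadratic subfield of $\Q(\zeta_p)$ is $\Q(\sqrt{-p})$.

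The heart of the argument is a single automorphism-group computation in each case. Writing the groups by their primary decomposition, $\Z/2\Z \times \Z/10\Z \cong (\Z/2\Z \times \Z/2\Z) \times \Z/5\Z$, $\Z/2\Z \times \Z/12\Z \cong (\Z/2\Z \times \Z/4\Z) \times \Z/3\Z$, and $\Z/16\Z$, I would compute $|\Aut(T)| = 6 \cdot 4 = 24$, $\ 8 \cdot 2 = 16$, and $\phi(16)=8$ respectively, using $|\Aut(\Z/2\Z \times \Z/2\Z)| = |\mathrm{GL}_2(\mathbb{F}_2)| = 6$ and $|\Aut(\Z/2\Z \times \Z/4\Z)| = 8$. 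In every case $|\Aut(T)|$ is even and divides $2^4 \cdot 3$, so because $p-1$ is even but divisible neither by $4$ nor by $3$, one gets $\gcd(p-1,|\Aut(T)|) = 2$, whence $d \le 2$.

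It then follows that the kernel $H := \ker\rho$ has index $d \le 2$ in $\Gal(\Q(\zeta_p)/\Q)$ and fixes every point of $T$ pointwise, so the coordinates of all points of $T$ lie in the fixed field $F := \Q(\zeta_p)^H$, which satisfies $[F:\Q] = d \le 2$. By the Galois correspondence for the cyclic extension $\Q(\zeta_p)/\Q$ there is a unique subfield of each degree dividing $p-1$, so $F \subseteq \Q(\sqrt{-p})$ (with $F=\Q$ when $d=1$). Therefore $T = E(\Q(\zeta_p))_{\text{tors}} \subseteq E(F)_{\text{tors}} \subseteq E(\Q(\sqrt{-p}))_{\text{tors}}$, and since $\Q(\sqrt{-p}) \subseteq \Q(\zeta_p)$ the reverse inclusion is automatic, giving the claimed equality.

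The step I expect to be the main obstacle is the $2$-primary part. Lemma \ref{4pairinggeneralmore}, which drives Theorem \ref{maintheorem}, is stated only for odd primes and cyclic $\ell$-parts, so it says nothing about the non-cyclic groups $\Z/2\Z \times \Z/2\Z$ and $\Z/2\Z \times \Z/4\Z$ occurring here, nor about the order-$4$ point in the second case. The automorphism-group viewpoint is precisely what absorbs these even-order pieces into one uniform framework; the only genuine work is the correct evaluation $|\Aut(\Z/2\Z \times \Z/4\Z)| = 8$ and the verification that the relevant gcd collapses to $2$. One could instead treat the odd parts with Lemma \ref{4pairinggeneralmore} and handle $E[2]$ via the splitting-field-of-the-cubic argument of Lemma \ref{mod16}, but the order-$4$ point would still require the automorphism bound, so the unified approach above is cleaner.
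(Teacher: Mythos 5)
Your proposal is correct, and it takes a genuinely different route from the paper. The paper argues case by case: for $\Z/2\Z\times\Z/2N\Z$ with $N=5,6$ it uses Lemma \ref{4pairinggeneralmore} to descend the odd-order torsion to the unique quadratic subfield $\Q(\sqrt{-p})$, and handles the $2$-torsion by noting that $\Q(E[2])\subseteq\Q(\zeta_p)$ together with $3\nmid p-1$ forces the $2$-division cubic to have a rational root, so the full $2$-torsion already lies in $E(\Q(\sqrt{-p}))$; for $\Z/16\Z$ it reruns the $x$-coordinate orbit count from the proof of Lemma \ref{mod16} (the Galois orbit of $x_0$ has at most $4$ elements, cannot have exactly $3$ or $4$ since those do not divide $p-1$, hence $[\Q(P):\Q]\leq 2$ for a point $P$ of order $16$, which generates everything). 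Your single homomorphism $\rho\colon\Gal(\Q(\zeta_p)/\Q)\to\Aut(T)$ with $|\rho(\Gal(\Q(\zeta_p)/\Q))|$ dividing $\gcd(p-1,|\Aut(T)|)=2$ replaces both mechanisms uniformly (cyclicity of the image is not even needed; Lagrange plus the first isomorphism theorem suffice), and your automorphism counts $24$, $16$, $8$ are correct. Your approach even has an edge over the paper's write-up: in the case $\Z/2\Z\times\Z/12\Z$ the paper descends $E(\Q(\zeta_p))[6]$ and $E[2]$ but never explicitly places a point of order $4$ in $\Q(\sqrt{-p})$, whereas your argument moves the whole group $T$ at once and so closes that loose end. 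What the paper's tools buy in exchange is locality and reusability: Lemma \ref{4pairinggeneralmore} and the orbit count apply when one controls only a piece of the torsion, which is exactly the situation in the elimination arguments of Section \ref{eliminationsection}, while your gcd-on-$|\Aut(T)|$ trick requires knowing the isomorphism type of $T$ in advance --- harmless here, since the theorem's hypothesis supplies it. Note finally that, like the paper, you use only $3\nmid p-1$ and $4\nmid p-1$ (the latter giving $p\equiv 3\pmod 4$ and hence the quadratic subfield $\Q(\sqrt{-p})$); the hypotheses on $5,7,11$ play no role in either proof.
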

\begin{proof}
   Firstly, let 
   \begin{equation*}
       E(\mathbb{Q}(\zeta_p))_{\text{tors}} \cong \Z/2\Z \times \Z/2N\Z \quad \text{for } N = 5, 6.
   \end{equation*}
   The only quadratic subfield of $\Q(\zeta_p)$ is $\Q(\sqrt{-p})$. Combining this fact with Lemma \ref{4pairinggeneralmore}, we obtain 
\begin{equation*}
    E(\Q(\sqrt{-p}))[N] = E(\Q(\zeta_p))[N].
\end{equation*}
   Assume that $E$ is in short Weierstrass form. Then $\Q(E[2])$ is the splitting field of the cubic $x^3 + Ax + B$. Since $E(\Q(\zeta_p))_{\text{tors}}$ has full $2$-torsion, the field $\Q(E[2])$ must be a subfield of $\Q(\zeta_p)$. Then the cubic cannot be irreducible, since $p - 1$ is not divisible by $3$. This means that either the cubic has three rational roots, or it has one rational root and two other roots from $\Q(\sqrt{-p})$. This implies that $E(\Q(\sqrt{-p}))_{\text{tors}}$ also has full $2$-torsion. Ultimately, $E(\Q(\zeta_p))_{\text{tors}} = E(\Q(\sqrt{-p}))_{\text{tors}}$.

Secondly, let $E(\mathbb{Q}(\zeta_p))_{\text{tors}} \cong \Z/16\Z$. Let $P = (x_0, y_0) \in E(\mathbb{Q}(\zeta_p))_{\text{tors}}$ be a point of order $16$. Then, the extension degree of $\Q(x_0)$ is equal to the number of elements in the set $\{x_0^\sigma : \sigma \in \Gal(\Q(\zeta_p)/\Q)\}$, and it also divides $n$. In total, there are exactly $8$ points of order $16$ in $E(\mathbb{Q}(\zeta_p))_{\text{tors}}$, and their $x$-coordinates form a set of $4$ elements. This means that $\{x_0^\sigma : \sigma \in \Gal(\Q(\zeta_p)/\Q)\}$ has at most $4$ elements. It cannot have $3$ or $4$ elements because we assumed $n$ is not divisible by them. This implies that $\Q(x_0)$ is at most a degree $2$ extension, so $x_0 \in \Q(\sqrt{-p})$. Also, $[\Q(x_0, y_0) : \Q(x_0)] \leq 2$, so the extension degree of $\Q(x_0, y_0)$ is a divisor of $4$, but it also divides $n$, so it is at most a degree $2$ extension. This implies $P \in E(\Q(\sqrt{-p}))_{\text{tors}}$. Since $P$ generates $E(\mathbb{Q}(\zeta_p))_{\text{tors}}$, we obtain $E(\mathbb{Q}(\zeta_p))_{\text{tors}} = E(\mathbb{Q}(\sqrt{-p}))_{\text{tors}}$.
\end{proof}

This theorem shows that if $E(\Q(\zeta_p))_{\text{tors}}$ is not one of the groups in Mazur's theorem, then we should consider only the quadratic extension and look for torsion points there. These three groups appear in Theorem \ref{najmanquadratic}, so we know that they are realizable over infinitely many elliptic curves. However, we are interested in whether they are realizable over the quadratic fields $\Q(\sqrt{-p})$ for primes $p$ such that $p - 1$ is not divisible by $3$, $4$, $5$, $7$, or $11$. 

By using the next two theorems, we can show that they are indeed realizable over such quadratic fields for some prime $p$.

\begin{theorem}[Jeon, Kim, Lee, \cite{JeonKimLee}, Theorem 3.2]
    Put $d=d(t)=8t^3-8t^2+1$ with $t \in \mathbb{Q}.$ Let $E$ be an elliptic curve
defined by the equation
\begin{equation*}E:y^2 + (1 - c)xy - by = x^3 - bx^2\end{equation*}
where $b=\frac{t^3(2t^2-3t+1)}{(t^2-3t+1)^2}$ and $c= -\frac{t(2t^2-3t+1)}{t^2-3t+1}$ with $t\neq 0,\frac{1}{2},1.$ Then the torsion subgroup of $E$ over $\mathbb{Q}(\sqrt{d})$ is $\mathbb{Z}/2\mathbb{Z} \times \mathbb{Z}/10\mathbb{Z}.$
\end{theorem}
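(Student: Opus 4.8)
The plan is to exhibit an explicit copy of $\mathbb{Z}/2\mathbb{Z}\times\mathbb{Z}/10\mathbb{Z}$ inside $E(\mathbb{Q}(\sqrt d))_{\text{tors}}$ and then invoke Najman's classification (Theorem \ref{najmanquadratic}) to upgrade the inclusion to an equality. The curve $E$ is written in Tate normal form, for which the point $P=(0,0)$ is automatically a nontrivial torsion point; the parameters $b,c$ are precisely the Kubert parametrization of $X_1(10)$, so the first task is to confirm that $P$ has order exactly $10$ over $\mathbb{Q}$. The second task is to show that the remaining two $2$-torsion points, which are not rational, become defined exactly over $\mathbb{Q}(\sqrt d)$ with $d=8t^3-8t^2+1$.

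First I would compute the small multiples of $P=(0,0)$ using the group law on $y^2+(1-c)xy-by=x^3-bx^2$. One finds $2P=(b,bc)$, which is an affine point and hence $2P\neq O$, so $P$ does not have order $1$ or $2$. Iterating the addition to reach $5P$, I would verify that $5P$ satisfies the $2$-torsion condition $2y+(1-c)x-b=0$ and is nonzero; equivalently $10P=O$ while $5P\neq O$. Since the order of $P$ divides $10$ and is neither $1,2$ nor $5$, it must be $10$. The excluded values $t\neq 0,\tfrac12,1$ are exactly those for which the discriminant of $E$ vanishes or the order of $P$ degenerates, so for all other $t$ the point $P$ generates a cyclic group $\mathbb{Z}/10\mathbb{Z}\subseteq E(\mathbb{Q})_{\text{tors}}$; by Mazur's theorem this forces $E(\mathbb{Q})_{\text{tors}}=\mathbb{Z}/10\mathbb{Z}$, so in particular the full $2$-torsion is \emph{not} rational.

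Next I would analyze the $2$-torsion. After completing the square to put $E$ in the form $Y^2=f(x)$ with $f$ a cubic, the rational $2$-torsion point $5P$ corresponds to a rational root of $f$, so $f(x)=(x-e)\,q(x)$ with $q$ a quadratic over $\mathbb{Q}$. The two missing $2$-torsion points are the roots of $q$, hence defined over $\mathbb{Q}(\sqrt{\operatorname{disc} q})$. The heart of the computation is to simplify $\operatorname{disc} q$, as a rational function of $t$, and to check that modulo squares it equals $d(t)=8t^3-8t^2+1$. Granting this, all three $2$-torsion points lie in $\mathbb{Q}(\sqrt d)$, so $E(\mathbb{Q}(\sqrt d))$ contains the full $2$-torsion together with the order-$10$ point $P$; as the cyclic group $\langle P\rangle$ meets the $2$-torsion only in $5P$, adjoining an independent $2$-torsion point $T$ (a root of $q$) yields $\langle P,T\rangle\cong \mathbb{Z}/10\mathbb{Z}\times\mathbb{Z}/2\mathbb{Z}$ inside the torsion subgroup.

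Finally, the upper bound is immediate from Theorem \ref{najmanquadratic}: among all groups that can occur as $E'(K)_{\text{tors}}$ for $E'/\mathbb{Q}$ and $K$ quadratic, none properly contains $\mathbb{Z}/2\mathbb{Z}\times\mathbb{Z}/10\mathbb{Z}$, since the cyclic groups have $2$-rank $1$, the groups $\mathbb{Z}/2\mathbb{Z}\times\mathbb{Z}/2N\mathbb{Z}$ with $N\leq 6$ contain an element of order $10$ only when $N=5$, and the remaining groups have order prime to $5$. Hence the inclusion is an equality and $E(\mathbb{Q}(\sqrt d))_{\text{tors}}\cong\mathbb{Z}/2\mathbb{Z}\times\mathbb{Z}/10\mathbb{Z}$. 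The main obstacle is purely computational: carrying out the group-law iteration to certify that $P$ has order $10$ and, more delicately, reducing $\operatorname{disc} q$ to $8t^3-8t^2+1$ modulo squares, which is where the specific shape of the parametrization is essential.
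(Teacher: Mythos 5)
There is nothing in the paper to compare your argument against: the statement is quoted from Jeon--Kim--Lee \cite{JeonKimLee} and used as a black box, with no proof reproduced. Judged on its own, your plan is correct and is essentially the standard argument for such parametrized families: recognize $(b,c)$ as the Kubert parametrization of $X_1(10)$ so that $P=(0,0)$ has exact order $10$ over $\mathbb{Q}$ (your logic --- $2P=(b,bc)\neq O$, $5P$ a nonzero $2$-torsion point, hence order dividing $10$ but not $2$ or $5$ --- is sound), compute the field where the remaining $2$-torsion lives as $\mathbb{Q}(\sqrt{\operatorname{disc} q})$, and cap the group using the classification of torsion over quadratic fields. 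Your check that no group in Theorem \ref{najmanquadratic} properly contains $\mathbb{Z}/2\mathbb{Z}\times\mathbb{Z}/10\mathbb{Z}$ is correct (cyclic groups have $2$-rank $1$; $\mathbb{Z}/2\mathbb{Z}\times\mathbb{Z}/2N\mathbb{Z}$ has an element of order $10$ only for $N=5$; the remaining groups have order prime to $5$). I spot-checked your key discriminant claim at $t=2$: there $b=24$, $c=6$, completing the square gives $Y^2=x^3-\tfrac{71}{4}x^2+60x+144=(x-8)\bigl(x^2-\tfrac{39}{4}x-18\bigr)$, and $\operatorname{disc}\bigl(x^2-\tfrac{39}{4}x-18\bigr)=\tfrac{2673}{16}=\tfrac{81\cdot 33}{16}\equiv 33=d(2)$ modulo squares, as you predict.

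Two points should be made explicit in a full writeup. First, Theorem \ref{najmanquadratic} requires $K$ to be a genuine quadratic field, so you need $d(t)\notin\mathbb{Q}^{\times 2}$ for all admissible $t$; your own Mazur step supplies this for free (if $\operatorname{disc} q$ were a square, $E(\mathbb{Q})$ would contain $\mathbb{Z}/2\mathbb{Z}\times\mathbb{Z}/10\mathbb{Z}$, contradicting Mazur since $E$ is nonsingular for $t\neq 0,\tfrac12,1$), but you should say so rather than leave it implicit. Second, the two symbolic verifications you defer --- that $P$ has order exactly $10$ for every admissible rational $t$ (including that the discriminant of $E$ vanishes only at the excluded values) and that $\operatorname{disc} q\equiv 8t^3-8t^2+1$ as an identity of rational functions modulo squares --- are routine but constitute the entire content of the theorem, so they must actually be carried out. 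A historical aside: Najman's classification postdates \cite{JeonKimLee}, whose upper bound rests on the older Kenku--Momose classification, but within the toolkit of the present paper your appeal to Theorem \ref{najmanquadratic} is legitimate and yields the same conclusion.
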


In this theorem, if we choose $t = -75/242$, then we get $d = -59 \times 14^2 / 11^6$, which means that $E$ has torsion $\mathbb{Z}/2\mathbb{Z} \times \mathbb{Z}/10\mathbb{Z}$ over $\mathbb{Q}(\sqrt{-59})$. This torsion subgroup is contained in $E(\mathbb{Q}(\zeta_{59}))_{\text{tors}}$. By Theorem \ref{maintheorem}, we observe that $E(\mathbb{Q}(\zeta_{59}))_{\text{tors}} \cong \mathbb{Z}/2\mathbb{Z} \times \mathbb{Z}/10\mathbb{Z}$, since it cannot be a larger group.

\begin{theorem}[Jeon, Kim, Lee, \cite{JeonKimLee}, Theorem 3.3]
Put $d = d(t) = \frac{t^2-1}{t^2+3}$ with $t\in \mathbb{Q}$. Let $E$ be an elliptic curve defined
by the equation    
\begin{equation*}E:y^2 + (1 - c)xy - (c + c^2)y = x^3 - (c + c^2)x^2\end{equation*}
where $c=\frac{1-t^2}{t^4+3t^2}$ with $t\neq -1,0,1.$ Then the torsion subgroup of $E$ over $\mathbb{Q}(\sqrt{d})$ is equal to $\mathbb{Z}/2\mathbb{Z} \times \mathbb{Z}/12\mathbb{Z}.$
\end{theorem}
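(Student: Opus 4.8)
The plan is to read the given equation as the Tate normal form $E(b,c)\colon y^2+(1-c)xy-by=x^3-bx^2$ with $b=c+c^2$, which is precisely Kubert's parametrization of $X_1(6)$. First I would verify directly that $T=(0,0)$ is a $\Q$-rational point of order $6$: a short group-law computation gives $2T=(b,bc)$ and $3T=(c,c^2)$, and substituting $b=c+c^2$ shows that $3T$ satisfies $2y+(1-c)x-b=0$, i.e. that $3T$ is a $2$-torsion point, so $T$ has order $6$ and $3T=(c,c^2)$ is the rational point of order $2$. I would also record the identity $d=-ct^2$, which is immediate from the formulas for $c$ and $d$; thus $\Q(\sqrt d)=\Q(\sqrt{-c})$, and the entire argument will consist of showing that adjoining this one square root upgrades $\langle T\rangle\cong\Z/6\Z$ to $\Z/2\Z\times\Z/12\Z$.

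Next I would reduce the claim to two square conditions. Since $\Z/2\Z\times\Z/12\Z$ has odd part $\Z/3\Z$ (already supplied by $2T$) and $2$-primary part $\Z/2\Z\times\Z/4\Z$, it suffices to show that over $L=\Q(\sqrt d)$ the curve has full $2$-torsion and that $T$ is divisible by $2$ (a point $B$ with $2B=T$ then has order $12$). Completing the square to $y^2=f(x)=(x-e_1)(x-e_2)(x-e_3)$ with $e_1=x(3T)=c$, the first condition is that the quadratic cofactor of $f$ splits over $L$; the second I would treat with the classical criterion that, over a field containing $E[2]$, a point $P$ lies in $2E(L)$ iff $x(P)-e_i$ is a square for every $i$. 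As $x(T)=0$ is unchanged by completing the square and $f(0)=(b/2)^2$, the three quantities $-e_i$ have product $(b/2)^2$, so if any two are squares in $L$ then so is the third. The crucial observation is that the rational quantity $-e_1=-c$ is a square in $L$ precisely when $\Q(\sqrt{-c})\subseteq L$, which is exactly why the working field is forced to be $\Q(\sqrt{-c})=\Q(\sqrt d)$.

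The heart of the proof, and the step I expect to be the main obstacle, is to pin down the $c$ for which the two remaining conditions hold over this same field and to extract the explicit parametrization. Concretely, I would require (i) the discriminant of the quadratic cofactor of $f$ to lie in the square class of $-c$ (full $2$-torsion over $\Q(\sqrt{-c})$ that is not already rational) and (ii) $-e_2$ to be a square in $\Q(\sqrt{-c})$. Each is a square-class condition on the single rational parameter $c$; their common solution locus is a curve which I would show has genus $0$ with a rational point, parametrize rationally, and normalize so that the parameter is the stated $t$, at which point the computation returns $c=\tfrac{1-t^2}{t^4+3t^2}$ and $d=-ct^2=\tfrac{t^2-1}{t^2+3}$. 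The delicate part is the square-class bookkeeping: deciding, for each rational quantity in play, whether it becomes a square in $\Q(\sqrt{-c})$ by being a rational square or by lying in $-c\cdot(\Q^*)^2$, and ensuring these choices are mutually consistent.

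Finally I would confirm the torsion is exactly $\Z/2\Z\times\Z/12\Z$. The construction yields $\Z/2\Z\times\Z/12\Z\hookrightarrow E(\Q(\sqrt d))_{\text{tors}}$, and since $E$ is defined over $\Q$ and $\Q(\sqrt d)$ is quadratic, Theorem \ref{najmanquadratic} shows this group is maximal in the relevant list, forcing equality. The excluded values $t\neq-1,0,1$ are exactly those for which $c\in\{0,-1\}$ or the discriminant of $E$ vanishes, so that one must check separately only that $E$ is smooth, that $T$ genuinely has order $6$, and that $d$ is a nonsquare giving a genuine quadratic field; these degeneracies I would verify by direct substitution.
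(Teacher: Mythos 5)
The paper does not actually prove this statement: it is quoted verbatim as Theorem 3.3 of Jeon--Kim--Lee \cite{JeonKimLee} and used as a black box (with $t=248354/307104$) to realize $\Z/2\Z\times\Z/12\Z$ over $\Q(\sqrt{-47})$, so there is no in-paper proof to compare yours against; I can only judge your reconstruction on its own merits. Its skeleton is sound and its checkable details are correct: the equation is the Tate normal form $E(b,c)$ with $b=c+c^2$, i.e.\ the Kubert parametrization of $X_1(6)$; indeed $2T=(b,bc)$ and $3T=(c,b-c)=(c,c^2)$, which satisfies $2y+(1-c)x-b=0$ exactly because $b=c+c^2$; the identity $d=-ct^2$ holds, so $\Q(\sqrt d)=\Q(\sqrt{-c})$; completing the square gives $f(0)=(b/2)^2$, so the product of the three quantities $x(T)-e_i=-e_i$ is a rational square and the third square condition follows from the other two; and the classical $2$-divisibility criterion over a field containing $E[2]$, combined with the maximality of $\Z/2\Z\times\Z/12\Z$ in Najman's list (Theorem \ref{najmanquadratic}), correctly converts the containment $\Z/2\Z\times\Z/12\Z\hookrightarrow E(\Q(\sqrt d))_{\text{tors}}$ into equality.

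Two caveats keep this a plan rather than a proof. First, the step you flag as the heart of the argument --- showing the two square-class conditions cut out a genus-zero locus whose parametrization ``returns'' $c=\tfrac{1-t^2}{t^4+3t^2}$ --- is asserted, not performed; note that for the theorem \emph{as stated} you need not re-derive the family at all: it suffices to substitute the given $c(t)$, factor the quadratic cofactor of $f$ over $\Q(\sqrt{-c})$, and verify the two square conditions as identities in $t$, a finite rational-function computation that your outline should either carry out or explicitly reduce to. Second, ``verify that $d$ is a nonsquare \ldots by direct substitution'' cannot work, since the claim ranges over all rational $t\neq 0,\pm 1$; fortunately no separate check is needed, because once your identities are established, any admissible $t$ with $d(t)$ a rational square would force $E(\Q)_{\text{tors}}\supseteq \Z/2\Z\times\Z/12\Z$, contradicting Mazur's theorem --- and nonsingularity is automatic, since the remaining discriminant factors would require $c=-1$ or $c=-1/9$, i.e.\ $t^2\in\{-1,3\}$, impossible for $t\in\Q$. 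With those two repairs your outline closes up into a complete proof, presumably along the same descent-style lines as \cite{JeonKimLee}.
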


In this theorem, if we choose $t = \frac{248354}{307104}$, then we get $d = -47\left(\frac{5^2 \times 17 \times 31}{37 \times 7933}\right)^2$. This means that the curve described above has torsion $\mathbb{Z}/2\mathbb{Z} \times \mathbb{Z}/12\mathbb{Z}$ over $\mathbb{Q}(\sqrt{-47})$. This torsion subgroup is contained in $E(\mathbb{Q}(\zeta_{47}))_{\text{tors}}$. By Theorem \ref{maintheorem}, we observe that $E(\mathbb{Q}(\zeta_{47}))_{\text{tors}} \cong \mathbb{Z}/2\mathbb{Z} \times \mathbb{Z}/12\mathbb{Z}$, since it cannot be a larger group.

These two elliptic curves show that the group structures
$\mathbb{Z}/2\mathbb{Z} \times \mathbb{Z}/10\mathbb{Z}$ and
$\mathbb{Z}/2\mathbb{Z} \times \mathbb{Z}/12\mathbb{Z}$ are
realizable. However, note that we have shown they are realizable for
specific values of $p$. We do not know if they are realizable for all
such primes $p$.

Torsion of elliptic curves over $\mathbb{Q}(\sqrt{d})$ is studied by Trbovi\'{c} in \cite{trbovic} for $0 < d < 100$. If a similar study is conducted in the future for $\mathbb{Q}(\sqrt{-d})$ instead, this would lead to a classification of torsion over $\mathbb{Q}(\zeta_p)$ for small primes like $p=47$ or $p=59$, where $p-1$ is not divisible by $3,4,5,7,11$.

The realization of $\mathbb{Z}/16\mathbb{Z}$ as a torsion subgroup over $\mathbb{Q}(\zeta_p)$ remains an open problem in general. In order for this group to appear as a torsion subgroup, we need to find a prime $p$ such that
$E(\mathbb{Q}(\sqrt{-p}))_{\text{tors}} \cong \mathbb{Z}/16\mathbb{Z}$ when $p \equiv 3 \pmod{4}$.
According to Theorem 1.3 in Banwait and Derickx \cite{bander}, such a realization does not occur for any prime $p < 800$ with $p \equiv 3 \pmod{4}$. This rules out all the primes smaller than $800$ considered in Theorem~\ref{maintheorem}.

Their result further shows that among the primes $p \equiv 1 \pmod{4}$, the only prime less than $800$ for which this torsion structure is realized is $p = 41$. For example, the elliptic curve \lmfdbec{266910}{ck}{6} satisfies
$E(\mathbb{Q}(\sqrt{41}))_{\text{tors}} \cong \mathbb{Z}/16\mathbb{Z}$ (available on LMFDB). This fact, combined with Theorem~\ref{allgroupslist}, implies that the torsion subgroup
$E(\mathbb{Q}(\zeta_{41}))_{\text{tors}}$ is isomorphic to $\mathbb{Z}/16\mathbb{Z}$.

Now, we turn our attention to the subgroups that arise when we relax the conditions in Theorem \ref{maintheorem} and are realized over only finitely many elliptic curves. These groups are of the form $\mathbb{Z}/N\mathbb{Z}$, where the modular curve $X_0(N)$ has genus greater than zero and possesses non-cuspidal rational points. For example, we are not interested in the case $N=20$, even though $X_0(20)$ has genus one and six rational points, because all of these points are cusps. Therefore, there is no rational elliptic curve with a $20$-isogeny.

We will be considering the values $N\in\{11,14,15,19,21,27,37,43,67,163\}$, and these values correspond to finitely many elliptic curves with $N$-isogeny. 
Here, when we say finitely many elliptic curves, we actually mean finitely many distinct $j$-invariants. Indeed, we must be careful because when working with a quadratic twist of an elliptic curve $E/\Q$, we can change the field of definition of the 
$N$-torsion points and even the extension degree can change. More rigorously, we have the isomorphism 
$$E:y^2=x^3+Ax+B \rightarrow E_d = dy^2=x^3+Ax+B,$$ 
defined by 
$$P=(x,y)\in E\rightarrow P_d=(x,y/\sqrt{d})\in E_d,$$
where $A, B$ are rationals and $d$ is a square-free integer. This implies that the field of definition $\mathbb{Q}(P_d)$ is contained in $\mathbb{Q}(P, \sqrt{d})$. As an example, we have the elliptic curve \lmfdbec{121}{a}{2} and its $11$-torsion is defined over $\mathbb{Q}(\zeta_{11})$. If we consider its $-11$-quadratic twist \lmfdbec{121}{c}{1}, then its $11$-torsion is defined over $\mathbb{Q}(\zeta_{11})^+$. If we consider its $-3$-quadratic twist \lmfdbec{1089}{i}{2}, then its $11$-torsion is defined over $\mathbb{Q}(\zeta_{33})^+$.
This data is available on LMFDB.

Our goal is to classify torsion subgroups over $ \mathbb{Q}(\zeta_p) $ where $ p $ is a prime. In the table below, we will list the elliptic curves $ E/\mathbb{Q} $ with $ E(K)_{\text{tors}} \cong \mathbb{Z}/N\mathbb{Z} $, for the values of $ N $ mentioned earlier, where $K$ is an abelian number field. We will also share the field of definition of the $N$-torsion. By Lemma \ref{isogeny-fieldofdefinition}, we know that the field of definition of the kernel of an $N$-isogeny is an abelian number field, so it is contained in some $\mathbb{Q}(\zeta_n)$ due to the Kronecker–Weber theorem. This is why we have an elliptic curve for each $j$-invariant with an $N$-isogeny in the table. We are also presenting the minimum positive integer $n$ described above.

As discussed earlier, the field of definition of a point of order $N$ can vary depending on the choice of the point. In our analysis, we choose a point such that the field of definition is an abelian number field. Another important consideration is the choice of quadratic twist. Depending on the square-free integer $d$, it is possible to have $E(\mathbb{Q}(\zeta_p))_{\text{tors}} \cong \mathbb{Z}/N\mathbb{Z}$ while $E_d(\mathbb{Q}(\zeta_p))_{\text{tors}} \not\cong \mathbb{Z}/N\mathbb{Z}$, since the field of definition of the $N$-torsion may be contained in $\mathbb{Q}(\zeta_p, \sqrt{d})$.

If possible, we will choose a suitable quadratic twist such that $E(\mathbb{Q}(\zeta_p))_{\text{tors}} \cong \mathbb{Z}/N\mathbb{Z}$ for some prime $p$. If this is not possible, we will list the field of definition of the $N$-torsion for the minimal quadratic twist corresponding to the given $j$-invariant. If multiple quadratic twists satisfy the same condition, we will select the one for which the field of definition has the smallest extension degree. For each case, we indicate the specific quadratic twist used relative to the minimal twist. We also include the conductor of the chosen twist, as it is relevant to the torsion data.

We will refer to the $j$-invariants under consideration, which can be found in Table~2 of Section~7 in \cite{Chou} and Table~4 of Section~9 in \cite{fieldofdefinition}. Throughout, we use LMFDB labels to identify elliptic curves, and the relevant data can be accessed on the LMFDB database \cite{lmfdb}.

Some explanation is needed regarding how the data in the table are obtained. In fact, almost no computations are involved. For the cases $N = 11, 14, 15, 17, 19, 21, 27, 43$, we simply present the information available in the growth of torsion section for each elliptic curve on the LMFDB. When we list an exact field of definition, it is stated explicitly there.

An important observation can be made in the case $N = 14$, which is the only even integer among the values discussed above. If we choose the elliptic curve $E$ as \lmfdbec{49}{a}{4}, then we obtain 
\begin{equation*}
E(\Q(\zeta_7))_{\text{tors}}\cong \Z/2\Z\times \Z/14\Z.\end{equation*}
However, if we instead consider the elliptic curve $E$ as \lmfdbec{49}{a}{3}, then we find
\begin{equation*}
E(\mathbb{Q}(\zeta_7))_{\text{tors}} \cong \mathbb{Z}/14\mathbb{Z}.
\end{equation*}
This shows that the two exceptional group structures appearing in Theorem~\ref{everycase} are indeed realized by some rational elliptic curves.

In the cases $N = 67$ and $N = 163$, the LMFDB database does not specify the exact field of definition of the $N$-torsion. However, for $N = 67$, it does state that the extension degree of the field is equal to $(N-1)/2$. Combining this fact with the discussion in the proof of Lemma~\ref{43isogeny}, we can conclude that the field of definition is $\mathbb{Q}(\zeta_N)^+$, as it is the unique subfield of $\mathbb{Q}(\zeta_N)$ of degree $(N-1)/2$ over $\mathbb{Q}$. For $N = 163$, all we can say is that the field of definition is contained in $\mathbb{Q}(\zeta_N)$.

For $N = 17$ and $N = 21$, the field of definition of the $N$-torsion is contained in $\mathbb{Q}(\zeta_{85})$ and $\mathbb{Q}(\zeta_{21})$, respectively. On the LMFDB database, the Galois groups and discriminants of these fields are provided. By combining these data with the Kronecker–Weber theorem, we obtain the information presented in the table.

The last case is $N = 37$. If $j(E) = -7 \cdot 11^3$, then the field of definition of the $N$-torsion is $\mathbb{Q}(\zeta_{35})^+$ when $E$ is the minimal quadratic twist. This data is available on the LMFDB. 

If $j(E) = -7 \cdot 137^3 \cdot 2083^3$, the situation becomes more intriguing. We know that the field of definition of the $37$-torsion is an abelian number field contained in $\mathbb{Q}(\zeta_n)$ for some positive integer $n$. To determine this minimal value of $n$, we apply Corollary~\ref{unramifiedcorollary}, using the conductor $N_E=5^2\cdot7^2$ of the minimal quadratic twist available on the LMFDB. The database also indicates that the extension degree of the field of definition is $36$ over $\mathbb{Q}$. From this, we deduce that $n$ must divide $1295$.

To confirm that $n$ cannot be smaller, we use \texttt{SageMath} to factor the $37$-division polynomial of $E$ over $\mathbb{Q}$. This polynomial factors into four irreducible components over $\mathbb{Q}$: one of degree $18$ and three of degree $222$. Let $P = (x_0, y_0) \in E(\overline{\mathbb{Q}})_{\text{tors}}$ be the generator of the kernel of the $37$-isogeny from $E$ to another elliptic curve. Then the degree $[\mathbb{Q}(x_0): \mathbb{Q}]$ must divide $36$. Hence, $\mathbb{Q}(x_0)$ has degree $18$, and its minimal polynomial corresponds to the first factor of the $37$-division polynomial.

We then examine the splitting field of this polynomial and observe that $5$, $7$, and $37$ are ramified in the extension. This confirms that the minimal $n$ for which the field of definition is contained in $\mathbb{Q}(\zeta_n)$ is indeed $1295$ for the minimal quadratic twist corresponding to this $j$-invariant.

\renewcommand{\arraystretch}{1.1}
\begin{table}[ht]
\centering
\caption{Elliptic curves with $N$-torsion defined over an abelian number field}\label{tablom}
\scalebox{0.8}{
\begin{tabular}{|c|c|c|c|c|c|}\hline

$N$& $j$-invariant & Quadratic Twist & LMFDB Label & Conductor & Field of Definition of the $N$-torsion \\ \hline

\multirow{ 3}{*}{11} & $-11 \cdot 131^3$ & -11& \lmfdbec{121}{c}{1}& $11^2$ & $\Q(\zeta_{11})^+$ \\ \cline{2-6}
&$-2^{15}$ & Minimal& \lmfdbec{121}{b}{2}& $11^2$  & $\Q(\zeta_{11})^+$  \\ \cline{2-6}
&$-11^2$ & -11& \lmfdbec{121}{a}{1}& $11^2$ & $\Q(\zeta_{11})^+$ \\ \hline

\multirow{ 2}{*}{14} & $-3^3 \cdot 5^3$ & $-7$& \lmfdbec{49}{a}{2}& $7^2$& $\Q(\zeta_{7})^+$\\ \cline{2-6}
&$3^3\cdot 5^3 \cdot 17^3$ & $-7$ & \lmfdbec{49}{a}{1}& $7^2$ & $\Q(\zeta_{7})^+$\\
 \hline

 \multirow{ 4}{*}{15} & $-5^2 / 2$ & Minimal& \lmfdbec{50}{a}{3}& $2\cdot 5^2$ & $\Q(\zeta_{5})$\\ \cline{2-6}
 & $-5^2 \cdot 241^3 / 2^3$ & -3& \lmfdbec{450}{g}{1}& $2\cdot 3^2 \cdot 5^2$& $\Q(\zeta_{15})^+$\\ \cline{2-6}
 & $-5 \cdot 29^3 / 2^5$ & Minimal& \lmfdbec{50}{b}{3}& $2\cdot 5^2$& $\Q(\sqrt{5})$\\ \cline{2-6}
&$5 \cdot 211^3 / 2^{15}$ &Minimal& \lmfdbec{50}{b}{4}& $2\cdot 5^2$& $\Q(\sqrt{-15})$\\
 \hline

 \multirow{ 2}{*}{17} & $-17^2 \cdot 101^3 / 2$ & Minimal& \lmfdbec{14450}{b}{2}& $2\cdot 5^2 \cdot 17^2$& Contained in $\Q(\zeta_{85})$\\ \cline{2-6}
& $-17 \cdot 373^3 / 2^{17}$ &Minimal& \lmfdbec{14450}{o}{2}& $2\cdot 5^2 \cdot 17^2$ & Contained in $\Q(\zeta_{85})$\\
 \hline

19& $-2^{15} \cdot 3^3$ & Minimal & \lmfdbec{361}{a}{2} & $19^2$& $\Q(\zeta_{19})^+
$ \\ \hline

\multirow{ 4}{*}{21} & $-3^2 \cdot 5^6 / 2^3$ &-3 & \lmfdbec{162}{c}{3}& $2\cdot 3^4$& $\Q(\zeta_{9})^+$\\ \cline{2-6}
 & $3^3 \cdot 5^3 / 2$ & Minimal& \lmfdbec{162}{b}{4}& $2\cdot 3^4$& $\Q(\zeta_{9})$\\ \cline{2-6}
 & $-3^2 \cdot 5^3 \cdot 101^3/2^{21}$ & -3& \lmfdbec{162}{c}{2}  &   $2\cdot 3^4$& Contained in $\Q(\zeta_{21})$\\ \cline{2-6}
&$-3^3 \cdot 5^3 \cdot 383^3 / 2^7$ &Minimal& \lmfdbec{162}{b}{1}&  $2\cdot 3^4$ & Contained in $\Q(\zeta_{21})$\\
 \hline

27& $-2^{15} \cdot 3 \cdot 5^3$ & Minimal & \lmfdbec{27}{a}{2} & $3^3$& $\Q(\zeta_{27})^+
$ \\ \hline

\multirow{ 2}{*}{37} & $-7 \cdot 11^3$ & Minimal& \lmfdbec{1225}{b}{2}& $5^2\cdot 7^2$ & $\Q(\zeta_{35})^+$\\ \cline{2-6}
&  $-7 \cdot 137^3 \cdot 2083^3$ &Minimal& \lmfdbec{1225}{b}{1}& $5^2\cdot 7^2$ & Contained in $\Q(\zeta_{1295})$\\
 \hline

43& $-2^{18} \cdot 3^3 \cdot 5^3$ & Minimal & \lmfdbec{1849}{b}{2} & $43^2$& $\Q(\zeta_{43})^+
$ \\ \hline

67& $-2^{15} \cdot 3^3 \cdot 5^3 \cdot 11^3$ & Minimal & \lmfdbec{4489}{b}{2} & $67^2$& $\Q(\zeta_{67})^+
$ \\ \hline

163& $-2^{18} \cdot 3^3 \cdot 5^3 \cdot 23^3 \cdot 29^3$ & Minimal & \lmfdbec{26569}{a}{2} & $163^2$& Contained in $\Q(\zeta_{163})$\\ \hline

\end{tabular}
}

\end{table}

Finally, we shall discuss the groups $\mathbb{Z}/N\mathbb{Z}$ where $N \in \{13, 16, 18, 25\}$. These groups are special among the groups of the form $\mathbb{Z}/N\mathbb{Z}$, because the remaining such groups either appear in the statement of Mazur's theorem or are realized for only finitely many elliptic curves. In contrast, the four group structures under consideration are realized for infinitely many elliptic curves, since the modular curve $X_0(N)$ has genus zero for these values of $N$.

We have already discussed $\mathbb{Z}/16\mathbb{Z}$ earlier in this chapter and provided an example where $16$-torsion is realized over $\mathbb{Q}(\zeta_{41})$. For the other cases, the elliptic curve \lmfdbec{147}{c}{2} satisfies $E(\mathbb{Q}(\zeta_7)^+)_{\text{tors}} \cong \mathbb{Z}/13\mathbb{Z}$, the elliptic curve \lmfdbec{14}{a}{4} satisfies $E(\mathbb{Q}(\zeta_7)^+)_{\text{tors}} \cong \mathbb{Z}/18\mathbb{Z}$, and the elliptic curve \lmfdbec{11}{a}{3} satisfies $E(\mathbb{Q}(\zeta_{11})^+)_{\text{tors}} \cong \mathbb{Z}/25\mathbb{Z}$.

The problem of determining the primes $p$ for which there exists an elliptic curve $E/\mathbb{Q}$ such that $E(\mathbb{Q}(\zeta_p))_{\text{tors}} \cong \mathbb{Z}/N\mathbb{Z}$ for $N \in \{13, 16, 18, 25\}$ is a difficult one, as we discussed earlier even in the case $N = 16$ alone, and lies beyond the scope of this work.

The case $N=18$ is particularly interesting and warrants a discussion similar to that of the 
$N=14$ case, since both group structures 
$\Z/N\Z$ and 
$\Z/2\Z \times \Z/2N\Z$ can occur. For the first structure, we have the elliptic curve 
$E$ given as \lmfdbec{14}{a}{4}, which satisfies
\begin{equation*}
E(\mathbb{Q}(\zeta_7))_{\text{tors}} = E(\mathbb{Q}(\zeta_7)^+)_{\text{tors}} \cong \mathbb{Z}/18\mathbb{Z}.
\end{equation*}
For the second structure, the elliptic curve 
$E$ given as \lmfdbec{14}{a}{5} satisfies
\begin{equation*}
E(\mathbb{Q}(\zeta_7))_{\text{tors}} \cong \mathbb{Z}/2\mathbb{Z} \times \mathbb{Z}/18\mathbb{Z}.
\end{equation*}

The last example among the exceptional groups listed in Theorem~\ref{everycase} of the form $ \mathbb{Z}/2\mathbb{Z} \times \mathbb{Z}/2N\mathbb{Z} $ occurs when $N=8$. We were unable to find any rational elliptic curve $E$ satisfying $ E(\mathbb{Q}(\zeta_p))_{\text{tors}} \cong \mathbb{Z}/2\mathbb{Z} \times \mathbb{Z}/16\mathbb{Z} $ for any prime $p$. In Lemma
\ref{mod16}, we showed that such a realization is not possible if $p\equiv 3\pmod{4}$.
However, our methods are not strong enough to rule out the existence of such a curve when $p \equiv 1 \pmod{4}$, and we believe it might still be possible in this case. In \cite{Chouquartic}, it is shown that there exist abelian quartic number fields $K$ for which $E(K)_{\text{tors}} \cong \mathbb{Z}/2\mathbb{Z} \times \mathbb{Z}/16\mathbb{Z}$. For instance, the elliptic curve \lmfdbec{15}{a}{6} satisfies this condition for the field $K = \mathbb{Q}(\zeta_{15})^+$.

\end{document}